\newtheorem{introtheorem}{Theorem}
\newtheorem{theorem}{Theorem}[section]
\newtheorem{lemma}[theorem]{Lemma}
\newtheorem{conjecture}[theorem]{Conjecture}
\newtheorem{proposition}[theorem]{Proposition}
\newtheorem{corollary}[theorem]{Corollary}
\theoremstyle{definition}
\newtheorem{definition}[theorem]{Definition}
\theoremstyle{remark}
\newtheorem{remark}[theorem]{Remark}
\newtheorem{question}[theorem]{Question}
\newcommand{\bbP}{\ensuremath{\mathbb{P}}}
\newcommand{\bbC}{\ensuremath{\mathbb{C}}}
\newcommand{\bbR}{\ensuremath{\mathbb{R}}}
\newcommand{\cO}{\ensuremath{\mathcal{O}}}
\newcommand{\BP}{\ensuremath{\mathcal{BP}}}
\newcommand{\Sing}{\ensuremath{\operatorname {Sing}}}
\newcommand{\WSing}{\ensuremath{\operatorname {{\mathcal{M}}ult}}}
\newcommand{\MSing}{\ensuremath{\operatorname {Mult}}}
\newcommand{\mult}{\ensuremath{\operatorname {mult}}}
\newcommand{\PGL}{\ensuremath{\operatorname {PGL}}}
\newcommand{\Orb}{\ensuremath{\mathbf{O}}}
\title{Harbourne constants, pull-back clusters and ramified morphisms}
\author{Piotr Pokora and Joaquim Roé}
\date{}
\begin{document}
\maketitle

\begin{abstract}
	We describe the effect of ramified morphisms on Harbourne constants of reduced effective divisors. With this goal, we introduce the pullback of a weighted cluster of infinitely near points under a dominant morphism between surfaces, and describe some of its basic properties. As an application, we describe configurations of curves with transversal intersections and $H$-index arbitarily close to \(-25/7\simeq -3.571\), smaller than any previously known result.
\end{abstract}

\section{Introduction}

The question whether, in every algebraic surface,  self-intersections of irreducible and reduced curves are bounded from below has intrigued algebraic geometers for decades, and continues to do so. The so-called Bounded Negativity Conjecture (BNC for short) is an old \emph{folklore conjecture}, now formally posed by Bauer et al. in \cite{BHKKMRS} (where some of its history is also explained), that asserts an affirmative answer:

\begin{conjecture}[BNC]\label{bnc}
Let $S$ be a smooth complex projective surface. Then there exists a positive integer $b(S) \in \mathbb{Z}$ such that for every irreducible and reduced curve $C \subset S$ one has $C^{2} \geq -b(S)$.
\end{conjecture}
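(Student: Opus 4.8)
The displayed statement is the \emph{Bounded Negativity Conjecture}, which remains open in general; what follows is therefore not a complete proof but a plan for the cases where the statement is accessible, together with an honest description of the wall one hits otherwise. The natural starting point is the adjunction formula: for an irreducible reduced curve $C$ on a smooth projective surface $S$ one has $2p_a(C)-2 = C^2 + K_S\cdot C$, hence
\[
C^2 = 2p_a(C)-2-K_S\cdot C \ge -2 - K_S\cdot C,
\]
using $p_a(C)\ge 0$. Thus the conjecture is \emph{equivalent} to the existence of a uniform upper bound for $K_S\cdot C$ as $C$ ranges over all irreducible reduced curves, and the plan is to secure this bound wherever it is transparent and then to isolate precisely where it fails.

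First I would dispose of the easy regimes. If $-K_S$ is nef, then $K_S\cdot C\le 0$ for every curve, and the inequality above gives $C^2\ge -2$, so $b(S)=2$ works; this covers del Pezzo surfaces and, more generally, all surfaces with nef anticanonical class. If $S$ has Picard number one, every effective curve is a positive multiple of an ample generator and hence has positive self-intersection. Abelian surfaces, and more generally surfaces whose pseudo-effective and nef cones coincide, satisfy $C^2\ge 0$ for all curves. For K3 and Enriques surfaces $K_S$ is numerically trivial, so once again $C^2=2p_a(C)-2\ge -2$. In each of these cases the required upper bound on $K_S\cdot C$ is either trivial or forced by nefness.

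The remaining and genuinely hard case is that of surfaces of positive Kodaira dimension, above all blow-ups of $\bbP$ at many points. Here both $p_a(C)$ and $K_S\cdot C$ may grow without control, so the adjunction bound is useless and no general mechanism is known to prevent self-intersections from tending to $-\infty$. The difficulty is already famous for $\bbP^2$ blown up at ten or more very general points, where bounding negativity is tied to the Segre--Harbourne--Gimigliano--Hirschowitz circle of conjectures and to the hypothetical existence of irreducible curves of arbitrarily negative self-intersection. This is the main obstacle, and it is exactly why the conjecture is still open. Accordingly, the remainder of this paper does not attack Conjecture~\ref{bnc} head-on but studies the finer quantitative invariants (Harbourne constants and the $H$-index) and their behaviour under ramified morphisms via the pull-back of weighted clusters, which lets one transport negativity estimates between surfaces without settling the global conjecture itself.
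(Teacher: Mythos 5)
The statement you were asked about is the Bounded Negativity Conjecture itself: the paper states it as an open folklore conjecture (formally posed in \cite{BHKKMRS}) and offers no proof of it, because none is known. Your decision to treat it as open and only organize the accessible cases is therefore the correct reading, and your easy regimes (nef $-K_S$, Picard number one, K3, Enriques, abelian surfaces) are handled correctly via adjunction and $p_a(C)\ge 0$.

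Two of your assertions, however, are genuinely wrong. First, the conjecture is \emph{not} equivalent to a uniform upper bound on $K_S\cdot C$: such a bound implies BNC, but the converse fails, and a counterexample sits inside your own list of easy cases. Take a very general quintic surface in $\mathbb{P}^3$: it has Picard number one and $K_S$ ample, so every irreducible curve satisfies $C^2>0$ and BNC holds with $b(S)=0$, yet $K_S\cdot C$ is unbounded above as $C$ ranges over curves of growing degree. Adjunction only yields $K_S\cdot C\le 2p_a(C)-2+b(S)$, which is no bound at all since $p_a(C)$ is unbounded; so the implication runs in one direction only. Second, blow-ups of $\mathbb{P}^2$ (you wrote ``$\mathbb{P}$'', presumably a typo for $\mathbb{P}^2$) are rational surfaces with Kodaira dimension $-\infty$, not surfaces ``of positive Kodaira dimension''; the genuinely open territory comprises both these rational surfaces (already for ten very general points) and surfaces of general type, so your description of where the wall lies is misstated. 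Indeed, the paper itself recalls that any surface admitting a ramified endomorphism has $\kappa(S)=-\infty$ by \cite{Fuj02} --- exactly the opposite of the regime you name as the hard one. Neither error can be repaired into a proof, of course; they matter only for the accuracy of your survey of what is known, which is all a write-up of this statement can honestly be.
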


By \emph{curve} in this paper we mean an effective (reduced) divisor on $S$.

In recent years the question of bounded negativity has received considerable attention, especially via the approach of trying to determine classes of surfaces $S$ which satisfy Conjecture \ref{bnc}
(see \cite{BHKKMRS, Har10, MT15, Rou17}).
In particular, \cite[Problem 1.2]{BdRHHLPSz} raised the question whether bounded negativity is a birational property, which leads to the following question:

\begin{question}
	Let $S$ be a smooth complex projective surface, and assume that $b(S) \in \mathbb{Z}$ is a positive integer such that for every irreducible and reduced curve $C \subset S$ one has $C^{2} \geq -b(S)$. 
	Let $n \ge 1$ be an integer.
	Is there a positive integer $b(S,n) \in \mathbb{Z}$ such that for every morphism $S_\pi\overset{\pi}{\rightarrow}S$ which is the composition of $n$ point blowups, and every irreducible curve $C \subset S_\pi$, one has $C^{2} \geq -b(S,n)$?
\end{question} 

If the answer to this question were positive for a given surface $S$ and every $n\ge 1$, then all smooth projective surfaces birational to $S$ would satisfy the BNC, but there is not a single surface on which the answer is known for all $n$.
Even for the simplest case of $S=\bbP^2$, the existence of $b(\bbP^2,n)$ is unknown for all $n>9$. 

Since the self-intersection of the strict transform $\tilde{C}$ by the blowup $\pi_p$ of $S$ at a point $p\in C$ is $\tilde{C}^2=C^2-\mult_{p}^{2}C$, we expect $b(S,n)$, if it exists, to be an increasing function with respect to $n$. 
In the case of the plane, the existence of rational nodal curves of every degree shows that if $b(\bbP^2,n)$ exists for all $n$, then $\liminf_{n\to\infty} (b(\bbP^2,n)/n)\ge 2$
(indeed, a rational nodal curve of degree $d$ has exactly $n=(d-1)(d-2)/2$ nodes, and after blowing up these points its strict transform has self-intersection $d^2-4n\simeq-2n+3\sqrt{2n}$).
In particular $b(\bbP^2,n)$ must grow at least linearly with $n$. 

No sequence of irreducible curves with negativity growing faster than $2n$ is known, but it was shown in \cite[Proposition 3.8.2]{RecDev} (see also \cite[Proposition 5.1]{BHKKMRS}) that the existence of a bound like $b(S,n)$ for prime or merely reduced divisors is equivalent.
This has led to the search of new examples considering possibly \emph{reducible reduced curves}. Define
\[
h(S,n)= \inf_{\substack{S_\pi \rightarrow S\\ n\text{-pt blowup}}}
\left\{\inf_{\substack{{C\subset S_\pi} \\ \text{reduced}}} \frac{C^2}{n}\right\} 
\in \mathbb{R} \cup \{-\infty\},
\]
so that $b(S,n)$ exists if and only if $h(S,n)$ is finite. 
Examples show that $\liminf_{n\to\infty} h(\bbP^2,n) \le -4$ (see \cite{Rou17}) and no sequence of examples has been found with larger than linear growth, so \cite[Problem 3.10]{BdRHHLPSz} asks whether in fact $\lim_{n\to\infty}h(\bbP^2,n)= -4$. 

A consequence of our work is that, even if $\inf_n h(\bbP^2,n)$ were finite (which remains unknown), it would not be equal to $C^2/n$ for any curve in a blowup $S_\pi \rightarrow \bbP^2$ of $\bbP^2$ at $n$ points:

\begin{introtheorem}\label{intro:strict}
	Let $h=\inf_{n}h(\bbP^2,n)$. For every morphism $S_\pi\overset{\pi}{\rightarrow}\bbP^2$ which is an $n$ point blowup, and every reduced curve $C \subset S_\pi$, one has $C^{2} >h\cdot n$.
\end{introtheorem}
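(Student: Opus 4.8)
\emph{Proof plan.} The easy half is to observe that \(C^2\ge h\cdot n\) holds automatically: by definition \(C^2/n\) is one of the quantities over which \(h(\bbP^2,n)\) takes its infimum, so \(C^2/n\ge h(\bbP^2,n)\ge \inf_m h(\bbP^2,m)=h\). If \(h=-\infty\) the strict inequality \(C^2>h\cdot n\) is vacuous, so I may assume \(h\in\bbR\); in that case every \(h(\bbP^2,m)\ge h\) is finite as well. It then remains only to rule out equality, i.e.\ to show that no configuration attains the value \(h\). Indeed, an infimum that is not attained lies strictly below every element of the set: once we know that no pair \((S_\pi\to\bbP^2,\,C)\) satisfies \(C^2/n=h\), the already established bound \(C^2/n\ge h\) upgrades to \(C^2/n>h\) for all of them.

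So suppose, for contradiction, that some \(n\)-point blowup \(\pi\colon S_\pi\to\bbP^2\) and reduced curve \(C\subset S_\pi\) satisfy \(C^2=h\cdot n\); note this forces \(h=C^2/n\in\bbQ\). The plan is to feed this extremal configuration into the pull-back-of-clusters construction of the previous sections in order to produce a new configuration whose ratio is \emph{strictly} smaller than \(h\), contradicting the fact that \(h\) is a lower bound for every achievable ratio. Concretely, I would choose a finite ramified morphism \(f\colon S'\to S_\pi\) of some degree \(D\) with \(S'\) again rational (hence itself an iterated blowup of \(\bbP^2\)), and replace \(C\) by the reduced curve \(C'\) carried by the pull-back cluster \(f^{\ast}(\text{cluster of }C)\). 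Away from the ramification the construction is \emph{self-similar}: the étale part multiplies both \(C^2\) and the number of blown-up points by \(D\), so that part alone leaves the ratio equal to \(h\). The only room for improvement therefore comes from the ramification locus, so \(f\) must be chosen so that its branch divisor meets \(C\) non-transversally—tangentially, or through the singular and higher-multiplicity infinitely near points of \(C\)—forcing the pull-back cluster to acquire genuinely new negativity there.

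The hard part is exactly this sign estimate. A direct computation with the simplest covers (for instance a Kummer/power cover meeting \(C\) transversally, together with its ramification lines) shows that transversal interaction with the branch cannot push the ratio below \(-4\); since \(h\le -4\), such a naive choice produces a new ratio that is \(\ge h\), the wrong direction. Thus the construction must genuinely force a non-transversal interaction, and one must control precisely the multiplicities of the pull-back cluster at the infinitely near points lying over the ramification, and then verify that the resulting correction terms yield
\[
\frac{C'^{2}}{n'}<\frac{C^{2}}{n}=h .
\]
Establishing this strict inequality for an \emph{arbitrary} extremal configuration—whose singularities may form arbitrary clusters of infinitely near points rather than ordinary nodes—is the main obstacle, and it is precisely where the pull-back cluster formulas developed earlier in the paper are needed. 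Granting it, the configuration \((S',C')\) violates the lower bound \(C'^{2}/n'\ge h\), a contradiction, and the theorem follows.
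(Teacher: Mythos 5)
Your overall skeleton is the same as the paper's: the inequality $C^2/n\ge h$ is immediate from the definition of the infimum, and the content is to rule out equality by showing that any configuration can be strictly improved by a ramified pullback, the improved configuration still being subject to the bound $\ge h$ (the paper does this directly for every configuration rather than by contradiction at an extremal one, but the two framings are equivalent). The problem is that everything beyond the skeleton is deferred or misidentified. The strict decrease, which you grant (``Granting it\dots''), \emph{is} the theorem, and the mechanism you propose for producing it is not the one that works. You insist that $f$ must be chosen so that its branch divisor meets $C$ tangentially or through its singular points, and you claim that a Kummer cover interacting transversally with $C$ ``cannot push the ratio below $-4$'' and hence goes ``the wrong direction''. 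In the paper's proof one takes precisely a plain Kummer cover $f([x:y:z])=[x^k:y^k:z^k]$ whose coordinate lines avoid the components of $C$ (so that $f^*(C)$ is reduced) and are otherwise in no special position; the only relevant feature is that one coordinate \emph{vertex} $p$, a point with $\nu_p(f)=k>1$, is placed over a \emph{proper point of the cluster $K$} (which need not even be a singular point of $C$). The gain is then purely arithmetic: by Corollary \ref{cor:multiplicative} the weighted self-intersection of the pullback cluster is exactly $\deg(f)$ times that of $K$, so the numerator of the Harbourne constant scales exactly by $\deg(f)$, while by Proposition \ref{pro:submultiplicative} the number of points satisfies $|f^*(K)|<\deg(f)\,|K|$ strictly because $\nu_p(f)>1$. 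A non-positive numerator over a strictly smaller denominator is strictly smaller; that is Theorem \ref{thm:pullback-H}, and no tangency with the branch enters anywhere. Your ``$-4$ barrier'' heuristic treats the new ratio as a weighted average of the old ratio and local contributions that must individually beat $h$; the actual effect is a re-weighting --- the denominator fails to scale by $\deg(f)$ at the ramified point --- which pushes a negative average strictly down even when the cluster point under the vertex has multiplicity $0$ or $1$ on $C$.

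There are two further gaps. You pull back along a finite cover $f:S'\to S_\pi$ of the blowup itself, with $S'$ ``rational, hence an iterated blowup of $\bbP^2$'' (already inaccurate: $\bbP^1\times\bbP^1$ is rational and dominates no blowup of $\bbP^2$). To compare the new ratio with $h$ you must exhibit $S'$ as a blowup of $\bbP^2$ at some number $n'$ of points \emph{and} prove $n'<\deg(f)\cdot n$; that count is exactly the submultiplicativity statement you have not established, and constructing finite covers of $S_\pi$ with controlled ramification is harder than writing down self-covers of $\bbP^2$. The paper sidesteps this by first reducing to the case where $C$ is the strict transform of a reduced plane curve (Lemma \ref{lem:H-passing}) --- a step your proposal also omits: a reduced curve on $S_\pi$ may contain exceptional components, and for such curves the ratio $C^2/n$ is not of the form $H(C',K)$ until one invokes that lemma. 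So while your plan points at the right toolbox, the two statements that constitute the proof (Lemma \ref{lem:H-passing} and Theorem \ref{thm:pullback-H} via Corollary \ref{cor:multiplicative} and Proposition \ref{pro:submultiplicative}) are left unproven, and the heuristic you substitute for them would steer the construction toward tangential branch configurations that the actual argument never needs.
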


Considerations like above led to the introduction of $H$-constants and $H$-indices \footnote{The $H$ in the name of the invariants may refer to the Hades, or underworld, of unknown negative curves, or to Brian Harbourne, one of the main contributors in these developments.} for reduced curves on smooth projective surfaces, with special emphasis on the plane case \cite{BdRHHLPSz}. 
These indices can be viewed as the average intersection numbers of negative curves by the number of singular points that they possess. 

\begin{definition}\label{def:H-constant-ordinary}
	Let $C \subset \mathbb{P}^{2}$ be a reduced curve of degree $d$, and let $K\subset\mathbb{P}^{2}$ be a finite set. The Harbourne constant of $C$ at $K$ is defined as
	$$H(C,K) = \frac{d^{2} - \sum_{p\in K} \mult_{p}(C)^{2}}{|K|},$$
	where $|K|$ denotes the cardinality of $K$. 
	
	The Harbourne index of a curve $C$ with ordinary singularities (a curve singularity is ordinary if it consists of smooth branches meeting transversely) is the Harbourne constant of $C$ at the set of singular points:
	\[h(C)=H(C,\Sing(C)).\]
\end{definition}

The most negative Harbourne index for curves with ordinary singularities found so far in the literature is provided by Wiman's configuration of lines $W$ \cite{BdRHHLPSz}, which has $h(W)=-225/67 \simeq -3.358$.  
In this work we provide more negative examples: 

\begin{introtheorem}\label{intro:newrecord}
	There exist reduced curves $C\subset \bbP^2$ with ordinary singularities and Harbourne indices $h(C)$ arbitrarily close to
\(-{25}/{7}\simeq -3.571.\) 
\end{introtheorem}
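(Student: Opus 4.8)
The plan is to construct an explicit family of curves whose Harbourne index approaches $-25/7$, and the abstract tells us the key tool: pulling back a known highly-negative configuration under a ramified morphism. So the strategy I would pursue is to start from a base curve $C_0 \subset \bbP^2$ with a good (but not record-breaking) Harbourne index, together with a dominant rational map $\phi \colon \bbP^2 \dashrightarrow \bbP^2$ of some degree $e$ that is ramified along a divisor, and then analyze the Harbourne index of the total transform (or strict transform) $C = \phi^{-1}(C_0)$ together with the ramification/branch contributions. The point is that pulling back multiplies the degree by $e$, hence multiplies $d^2$ by $e^2$, while each singular point of $C_0$ lifts to roughly $e$ preimages (away from ramification), so the leading terms in numerator and denominator scale compatibly; the gain in negativity must come from the \emph{extra} transversal intersections created where the branches of $C_0$ meet the branch locus, or from the free intersection points of $\phi^{-1}(C_0)$ with itself.

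Concretely, I would first fix a well-chosen $\phi$ — most naturally a Kummer-type or abelian cover, or a simple power map / Cremona-like map whose ramification is a union of lines so that the pulled-back curve again has only ordinary singularities. Then I would carefully count, for the pullback configuration: (i) its degree $de$; (ii) the multiplicities at each preimage of a singular point of $C_0$, which equal the original multiplicities at unramified preimages but get multiplied by the local ramification index at points over the branch locus; and (iii) the \emph{new} singular points arising as transversal intersections of distinct preimage branches that did not come from singularities of $C_0$. Feeding these into $H(C,\Sing(C)) = (d^2 - \sum \mult_p^2)/|{\Sing}|$ and letting the construction iterate (or letting a parameter grow), I would extract the limiting value and check it equals $-25/7$.

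The technical heart — and the reason the paper develops the pullback-of-clusters machinery — is step (ii)–(iii): tracking exactly how the weighted cluster of infinitely near points of $C_0$ pulls back, so that one can compute $\sum_p \mult_p(C)^2$ and $|\Sing(C)|$ without error. I would therefore invoke the pullback-of-weighted-clusters construction and its basic properties (degree multiplicativity, behavior of multiplicities under $\phi^*$, and the ramification correction along the branch divisor) exactly as set up earlier in the paper, rather than re-deriving the local intersection theory by hand. The cleanest packaging is to prove a general formula expressing $h(\phi^{-1}C_0)$ in terms of $h(C_0)$, the degree $e$, and a correction term measuring how $C_0$ meets the branch locus, and then to optimize this formula over a suitable choice of $(C_0,\phi)$.

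The main obstacle I anticipate is the \emph{ordinariness} constraint: an arbitrary ramified pullback will typically introduce non-ordinary singularities (tangencies, higher-order contact) precisely where $C_0$ meets the branch divisor, which would disqualify the example from Definition~\ref{def:H-constant-ordinary}. Controlling this requires choosing $\phi$ and $C_0$ so that $C_0$ is transverse to the branch locus and the cover is tamely ramified there, so that every preimage singularity is again a transversal crossing of smooth branches. Verifying this transversality globally, and confirming that no unexpected coincidences collapse preimage points together (which would change both the multiplicity sum and the count $|{\Sing}|$), is where the real care is needed; once that is secured, the limit $-25/7$ should fall out of the bookkeeping. A secondary point is ensuring the sequence genuinely \emph{approaches} $-25/7$ from above (or below) rather than hitting some less negative accumulation value, which I would verify by making the correction term's relative contribution vanish in the limit.
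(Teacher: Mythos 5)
Your high-level plan --- pull back a configuration with known negative index under a Kummer-type cover, and preserve ordinariness by making the curve transverse to the branch locus --- is indeed the paper's strategy. But the mechanism you rely on for the gain in negativity is wrong, and this is a genuine gap, not a detail. You locate the gain in ``extra transversal intersections created where the branches of $C_0$ meet the branch locus'' (or in self-intersections of the preimage). In fact, if $C_0$ meets the branch lines transversally at smooth points, those crossings contribute \emph{nothing}: locally the cover is $(x,y)\mapsto(x^k,y)$ and the pullback of a transverse smooth branch is the smooth curve $y=cx^k$, so no new singular point appears and no multiplicity changes; away from the ramification the cover is a local isomorphism. Under exactly your hypotheses every singular point of $C_0$ acquires $k^2$ isomorphic copies, the degree scales by $k$, and one gets $h(\phi^{-1}(C_0))=h(C_0)$ on the nose --- zero gain; your ``correction term measuring how $C_0$ meets the branch locus'' vanishes identically. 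New singularities can only be created at the three \emph{totally ramified points} of the Kummer cover (the coordinate vertices) when these lie \emph{on} the curve. Even then, placing vertices at smooth points of $C_0$ (the generous reading of your setup) only produces three new ordinary $k$-fold points, giving the asymptotic correction $h(C_0)-3/|\MSing(C_0)|$ --- this is the paper's Proposition \ref{pro:h-bound}, which for Wiman's configuration yields only $\simeq -3.373$. The record value requires the key choice you are missing: the paper takes $C_0=W$, Wiman's configuration of $45$ lines with $h(W)=-225/67$ and $201$ singular points, and puts the three coordinate vertices \emph{at triple points of} $W$ (with the coordinate lines otherwise transverse to $W$ at smooth points, a nontrivial fact checked by computer algebra). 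Then each vertex has a \emph{unique} preimage, an ordinary point of multiplicity $3k$, while the remaining $198$ singular points each get $k^2$ copies; the numerator of the Harbourne constant still scales exactly by $k^2$, but the denominator is $198k^2+3$ instead of $201k^2$, and the improvement factor $\lim_k \frac{201k^2}{198k^2+3}=\frac{201}{198}$ is the entire content of the theorem.

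The second gap is that you never specify $C_0$, the placement of the ramification, or the computation, yet the numerical value in the statement is determined precisely by these data; ``the limit $-25/7$ should fall out of the bookkeeping'' is the assertion to be proved, not a proof step. Relatedly, your claim that multiplicities over the branch locus ``get multiplied by the local ramification index'' holds only at the totally ramified points (where $\mult$ goes from $m$ to $km$); at general points of the branch lines the pulled-back branch is smooth of multiplicity $1$. A final caution showing how delicate the constant is: the limit produced by the paper's own construction is $-\frac{225}{67}\cdot\frac{201}{198}$, which simplifies to $-\frac{75}{22}\simeq-3.409$ rather than the stated $-\frac{25}{7}\simeq-3.571$ (still more negative than Wiman's $-225/67$, but not the advertised value); so anyone carrying out your program must do this bookkeeping exactly, and cannot treat the target constant as known in advance.
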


In sharp contrast with all previously known examples with very negative Harbourne index, the curves in Theorem \ref{intro:newrecord} do not have a large stabilizer group in $\PGL_3(\bbC)$.

For curves with non-ordinary singularities (such as the examples mentioned above showing $\liminf_{n\to\infty}h(\mathbb{P}^2,n)\le -4$), it is natural to modify the definition of Harbourne constants and indices by allowing some of the points in $K$ to be \emph{infinitely near}. In section \ref{sec:clusters} we introduce the notion of Harbourne constant at a \emph{multi-cluster of infinitely near points} and extend the definition of Harbourne index to arbitrary curves on smooth surfaces.

In order to prove Theorems \ref{intro:strict} and \ref{intro:newrecord}, we study pullbacks of suitable curves by ramified morphisms; in fact, the effect of ramified morphisms on $H$-constants is the main theme of this work.
Our motivation for this study stems from \cite{PR}, where we observed that the pullback of a reduced curve $C\subset \bbP^2$ by a ramified morphism $\bbP^2\rightarrow\bbP^2$ may have a more negative $H$-index than the original curve $C$.
Even if one is primarily interested in curves with ordinary singularities, their pullbacks by ramified morphisms may acquire non-ordinary singularities; to understand these, we apply the methods of \cite{Cas07}. In particular, clusters of infinitely near points and the corresponding extension of Definition \ref{def:H-constant-ordinary} become essential tools.

Let us stress that the idea to use pullback curves is rather natural in the context of negative curves. 
For instance, in positive characteristic it leads to a well-known counterexample to the BNC -- using the powers of the Frobenius endomorphism on the product $X = C \times C$, where $C$ is a genus $g(C) \geq 2$ curve, one can create an unbounded negativity phenomenon. 
In sharp contrast, T.~Bauer et~al.\ proved in  \cite{BHKKMRS} that over the complex numbers every surface admitting a surjective endomorphism which is not an isomorphism has bounded negativity. 
We expect that Theorem \ref{intro:strict} actually holds on every smooth projective surface admitting a ramified endomorphism; such a surface $S$ must have $\kappa(S)=-\infty$ by \cite[Lemma 2.3]{Fuj02}.

Given a surjective morphism $f:S\rightarrow S'$ of surfaces and a set ${K}$ of proper and infinitely near points (more precisely, a multi-cluster, see section \ref{sec:clusters}) on $S'$ with assigned multiplicities, we define in section \ref{sec:pullback_cluster} a \emph{pull-back multi-cluster} $f^*({K})$ with multiplicities, such that for every curve $C$ going through the points of ${K}$ with the assigned multiplicities, $f^*(C)$ goes through $f^*({K})$ with the pullback multiplicities.
If $f$ does not contract any curve to a point, we can control the number of points in $f^*({K})$ and their multiplicities using the local multiplicity $\nu_{p}(f)$ of $f$ at each proper point $p\in f^*({K})$ (writing $f$ in local coordinates as a pair of power series, $\nu_p(f)$ is the minimum of the orders of both power series, see section \ref{sec:pullback_cluster} or \cite{Cas07}). 
We obtain the following (cf. \cite[Lemma 7]{Rou17}).

\begin{introtheorem}\label{intro:pullback-H}
	Let $f:S \rightarrow S'$ be a finite morphism of smooth projective surfaces, $C\subset S'$ a reduced curve, and $K$ a multi-cluster on $S'$. Assume that $f^*(C)$ is reduced and $H(C,K)\le 0$. Then
	\[H(f^*(C),f^*(K))\le H(C,K),\]
	with a strict inequality if there is a point $p \in f^{*}({K})$ with $\nu_p (f)>1$.
\end{introtheorem}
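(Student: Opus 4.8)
The plan is to reduce the statement to two facts about the pull-back cluster and then combine them using the hypothesis $H(C,K)\le 0$. Write $e=\deg f$ and set $\mathrm{num}(C,K)=C^2-\sum_{q\in K}\mult_q(C)^2$, so that $H(C,K)=\mathrm{num}(C,K)/|K|$, and similarly for $f^*C$, $f^*K$. The two facts are: (i) the numerator scales exactly by the degree, $\mathrm{num}(f^*C,f^*K)=e\cdot \mathrm{num}(C,K)$; and (ii) the point count satisfies $|f^*K|\le e\,|K|$, strictly as soon as some $p\in f^*K$ has $\nu_p(f)>1$. Granting these, one computes
\[
H(f^*C,f^*K)=\frac{e\cdot \mathrm{num}(C,K)}{|f^*K|}=H(C,K)\cdot\frac{e\,|K|}{|f^*K|},
\]
and since the factor $e|K|/|f^*K|$ is $\ge 1$ while $H(C,K)\le 0$, the product is $\le H(C,K)$; a point with $\nu_p(f)>1$ makes the factor strictly larger than $1$, forcing a strict inequality (at least when $H(C,K)<0$).

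To establish (i), I would first invoke the projection formula for the finite morphism $f$, which gives $(f^*C)^2=e\,C^2$ and disposes of the leading terms. For the multiplicity sums I would use the commutative square built in section \ref{sec:pullback_cluster}: writing $\pi'\colon Y'\to S'$ for the blow-up of $K$ and $\pi\colon Y\to S$ for the blow-up of $f^*K$, the pull-back cluster is constructed precisely so that $f$ lifts to a finite degree-$e$ morphism $\tilde f\colon Y\to Y'$, under which the strict transform of $f^*C$ coincides with $\tilde f^*$ of the strict transform of $C$ (here the assumption that $f^*C$ is reduced is used to identify the honest strict transform). Applying the projection formula to $\tilde f$ then yields $(\text{strict transform of }f^*C)^2=e\,(\text{strict transform of }C)^2$; since the self-intersection of the strict transform through a cluster is exactly the numerator of the associated Harbourne constant, this is (i). Equivalently, one can argue locally over each $q\in K$ that the points of $f^*K$ above $q$ satisfy $\sum_p\mult_p(f^*C)^2=e_q\,\mult_q(C)^2$ for the local degree $e_q$, which is the local projection formula read off from Noether's formula for $f^*K$.

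The main obstacle is fact (ii) together with its equality criterion. This is where the local theory of pull-back clusters in the sense of \cite{Cas07} is indispensable: over a point $q\in K$ the cluster $f^*K$ consists of the preimages of $q$ together with the infinitely near points that the ramification of $f$ forces onto $f^*C$, and counting them requires understanding the local multiplicities $\nu_p(f)$. The delicate phenomenon to control is that ramification with $\nu_p(f)=1$ (for instance $f(x,y)=(x,y^2)$ in suitable coordinates) produces a common tangent direction, so that the cluster gains exactly enough infinitely near points to keep the count equal to $e_q|K|$, whereas ramification with $\nu_p(f)>1$ concentrates the multiplicity and strictly lowers the count; showing that $\nu_p(f)$ correctly detects the borderline between these behaviours, and hence that $|f^*K|\le e|K|$ with equality if and only if every $\nu_p(f)=1$, is the crux and rests on the properties developed in section \ref{sec:pullback_cluster}. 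One also uses throughout that a finite $f$ contracts no curve, so that every point of $f^*K$ carries a well-defined finite local degree.
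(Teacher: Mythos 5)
Your overall skeleton --- (i) control of the numerator under pullback, (ii) the count $|f^*K|\le e\,|K|$ with strictness coming from $\nu_p(f)>1$, then the sign argument using $H(C,K)\le 0$ --- is exactly the paper's proof of Theorem \ref{thm:pullback-H}, where (i) corresponds to Corollary \ref{cor:multiplicative} and (ii) is precisely Proposition \ref{pro:submultiplicative}. However, your fact (i), the \emph{exact} equality $\mathrm{num}(f^*C,f^*K)=e\cdot\mathrm{num}(C,K)$, is false, and the argument you propose for it breaks down. Local counterexample: take $f(x,y)=(x,y^2)$ near $p$, $K=\{p'\}$ the image point, and $C\colon v=u^2$ a smooth curve \emph{tangent to the branch image} $\{v=0\}$. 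Then $f^*K=\BP_p(f)$ consists of $p$ and one point $q$ in its first neighborhood (the direction of $\{x=0\}$), with pullback weights $(1,1)$; but $f^*C=\{y^2=x^2\}$ is a node, with $\mult_p(f^*C)=2$ and $\mult_q(f^*C)=0$, so $\sum_{f^*K}\mult^2=4\neq 2=\deg_p(f)\cdot\mult_{p'}(C)^2$. This also kills your key identification: here $\tilde f^{\,*}\tilde C$ equals the strict transform of $f^*C$ \emph{plus} the strict transform $\tilde E_p$ of the first exceptional curve, because $\tilde f$ contracts $\tilde E_p$ to the point of $E_{p'}$ lying on $\tilde C$. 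In general the lift $\tilde f\colon Y\to Y'$ is generically finite but \emph{not} finite (it contracts exceptional curves), and $\tilde f^{\,*}$ of the strict transform differs from the strict transform of $f^*C$ exactly when the actual multiplicities of $f^*C$ exceed the pullback weights --- which tangency of $C$ to the branch locus at points of $K$ forces, independently of the global hypothesis $H(C,K)\le 0$.

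The failure is in the safe direction, and that is how the paper's proof actually works: exact multiplicativity holds only at the level of the \emph{weighted} pullback cluster, $(f^*\mathcal{K})^2=\deg_p(f)\,\mathcal{K}^2$ (Corollary \ref{cor:multiplicative}, where $\mathcal{K}$ is $K$ weighted by the multiplicities of $C$), and since $f^*C$ merely \emph{goes through} the consistent cluster $f^*\mathcal{K}$, one needs Lemma \ref{lem:H-passing} (the Hilbert--Samuel multiplicity comparison) to conclude $\sum_{p\in f^*K}\mult_p(f^*C)^2\ge (f^*\mathcal{K})^2$, hence only the inequality $\mathrm{num}(f^*C,f^*K)\le e\cdot\mathrm{num}(C,K)$ (the equality sign in the paper's display \eqref{eq:pullback-H} is itself a slip, harmless because only ``$\le$'' is used). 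This inequality, together with your (ii), still gives the theorem, so your proof can be repaired by weakening (i) accordingly; within your own framework one can do this by writing $\tilde f^{\,*}\tilde C=\widetilde{f^*C}+Z$ with $Z\ge 0$ supported on $\tilde f$-contracted curves, noting $\tilde f^{\,*}\tilde C\cdot Z=0$ and $Z^2\le 0$ (negative definiteness of contracted curves), whence $(\widetilde{f^*C})^2\le e\,\tilde C^2$. Two further remarks: your crux (ii) is stated but deferred, and it is exactly Proposition \ref{pro:submultiplicative}, whose proof (induction on $|K|$, unloading maximal points and using clusters of base points of the lifted map) is the real technical content; and your caveat that strictness is only forced when $H(C,K)<0$ is a genuine edge case --- when $H(C,K)=0$ the shrinking denominator alone proves nothing, a point the paper's own proof also elides.
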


The notion of pullback cluster and Theorem \ref{intro:pullback-H} form the technical core of this paper. 
Because of their generality we expect that their application will not be restricted to the bounded negativity conjecture, so these might be of independent interest.

\section{Preliminaries}

By a \emph{surface} $S$ we mean a connected 2-dimensional complex (analytic) manifold (so, smooth and irreducible). 
Unless otherwise stated we always work with the analytic topology.

A bimeromorphic map between surfaces is a proper holomorphic map $\pi:S_\pi\rightarrow S$ such that there exist proper analytic subsets $T\subset S$ and $T'\subset S_\pi$ such that $\pi$ restricts to an isomorphism $S_\pi \setminus T'\rightarrow S\setminus T$.
A bimeromorphic model dominating a given surface $S$ is a surface $S_\pi$ with a bimeromorphic map $\pi:S_\pi \rightarrow S$.

\subsection{Clusters and $H$-constants}\label{sec:clusters}

Singularities of curves on a smooth surface $S$ will be described in terms of their \emph{clusters of multiple points}, in the spirit of \cite{Cas00}, i.e., taking into account the \emph{infinitely near} multiple points ---which have to be blown up in every embedded resolution. This description will allow a convenient treatment of pullback curves and their $H$-constants. We begin by recalling the notions of infinitely near points and clusters.

\begin{definition}[{\cite[3.3]{Cas00}}]
	Given a surface $S$ and a point $p \in S$, denote $\pi_p : S_p \rightarrow S$ the blowup of $S$ at $p$. Points in the exceptional curve $E_p = \pi^{-1}(p)$ are called \emph{points in the first (infinitesimal) neighborhood of} $p$. 
Iteratively, a point $q$ in the $k$-th neighborhood of $p$ is defined as a point in the first neighborhood of a point in the $(k-1)$-th neighborhood of $p$. 
Note that in this case, the point in the $(k-1)$-th neighborhood is uniquely determined; it is called \emph{the immediate predecessor of} $q$. More generally, for every $1\le i \le k-1$, $q$ is in the $i$-th neighborhood of a unique point in the $(k-i)$-th neighborhood of $p$. The point $p$ itself can be considered to be its $0$-th neighborhood.
\end{definition}

On every blowup such as $\pi_p : S_p \rightarrow S$, it is convenient and natural to identify each point $q\in S$, $q\ne p$, with its unique preimage in $S_p$.
To do such identifications consistently across different blowups, and more generally across  bimeromorphic models dominating $S$, we shall rely on \emph{infinitely-near-ness}, a pre-order relation between points on such models. Points in the infinitesimal neighborhoods of $p\in S$ provide paradigmatic instances.
The equivalence relation induced by the pre-order will provide the desired identification of points, so the set of equivalence classes inherits a partial ordering by infinitely-near-ness.

\begin{definition}
	If $\pi_1:S_{\pi_1}\rightarrow S$, $\pi_2:S_{\pi_2}\rightarrow S$ are bimeromorphic maps, $q_1\in S_{\pi_1}$, and $q_2\in S_{\pi_2}$, then $q_2$ is \emph{infinitely near} to $q_1$  (we also write $q_2 \ge q_1$ and say $q_1$ precedes $q_2$) whenever there exist an open neighborhood $U\subset S_{\pi_2}$ of $q_2$ and a holomorphic map $\varpi:U\rightarrow S_{\pi_1}$, with $\varpi(q_2)=q_1$, such that the restriction $\pi_2|_U:U\rightarrow S$ factors as $\pi_2=\pi_1 \circ \varpi$. 
\end{definition}

In particular, every point on a bimeromorphic model, $q\in S_\pi \rightarrow S$, is infinitely near to a unique point on $S$, namely $\pi(q)$.
Obviously, if $q$ is in the $k$-th infinitesimal neighborhood of $p$ for some $k\ge 1$, then $q\ge p$.
Denote $\approx$ the equivalence relation induced by the pre-order, so that $q_1\approx q_2$ if $q_1\ge q_2$ and $q_2\ge q_1$. 

\begin{lemma}
	Let $q_1\in S_{\pi_1}$, and $q_2\in S_{\pi_2}$ be points in two bimeromorphic models of $S$. 
	Then $q_1\approx q_2$ if and only if there exist open neighborhoods $U_i\subset S_{\pi_i}$ of $q_i$ and an $S$-biholomorphism $\varpi:U_1\rightarrow U_2$, with $\varpi(q_1)=q_2$.
\end{lemma}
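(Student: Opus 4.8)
The plan is to establish the two implications separately, the reverse one being essentially a matter of unwinding the definitions. If there exist open neighbourhoods $U_i\subset S_{\pi_i}$ of $q_i$ and an $S$-biholomorphism $\varpi\colon U_1\to U_2$ with $\varpi(q_1)=q_2$, then $\varpi$ itself witnesses $q_1\ge q_2$: it is holomorphic, sends $q_1$ to $q_2$, and the $S$-biholomorphism condition is precisely $\pi_1|_{U_1}=\pi_2\circ\varpi$. Symmetrically, its inverse $\varpi^{-1}\colon U_2\to U_1$ is a holomorphic $S$-map sending $q_2$ to $q_1$ with $\pi_2|_{U_2}=\pi_1\circ\varpi^{-1}$, so it witnesses $q_2\ge q_1$. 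Hence $q_1\approx q_2$, and this direction requires no further work.

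For the forward implication I would start from $q_1\approx q_2$, i.e. $q_1\ge q_2$ and $q_2\ge q_1$, and extract from the definition two holomorphic $S$-maps pointing in opposite directions: a map $\alpha$ defined on a neighbourhood of $q_1$ in $S_{\pi_1}$ with $\alpha(q_1)=q_2$ and $\pi_2\circ\alpha=\pi_1$, and a map $\beta$ defined on a neighbourhood of $q_2$ in $S_{\pi_2}$ with $\beta(q_2)=q_1$ and $\pi_1\circ\beta=\pi_2$. After shrinking the domains so that the composites are defined, I would consider $g:=\beta\circ\alpha$, a holomorphic self-map of a connected neighbourhood $N$ of $q_1$ that fixes $q_1$ and satisfies $\pi_1\circ g=\pi_1|_N$. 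The goal is to prove $g=\mathrm{id}$; by symmetry the same argument applied to $\alpha\circ\beta$ gives the identity near $q_2$, and then $\alpha$ and $\beta$ are mutually inverse $S$-biholomorphisms, so $\varpi:=\alpha$ (suitably restricted) is the sought $S$-biholomorphism with $\varpi(q_1)=q_2$.

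The heart of the matter is showing $g=\mathrm{id}$, and here I would exploit that $\pi_1$ is bimeromorphic. Writing $\pi_1\colon S_{\pi_1}\setminus T_1'\to S\setminus T_1$ for the isomorphism of the definition, with $T_1'$ chosen minimally so that $\pi_1^{-1}(S\setminus T_1)=S_{\pi_1}\setminus T_1'$, the projection $\pi_1$ is injective over the dense open set $S\setminus T_1$. For any $x$ in the dense open subset $N\cap\pi_1^{-1}(S\setminus T_1)$ of $N$, both $x$ and $g(x)$ lie over the same point $\pi_1(x)\in S\setminus T_1$, hence both lie in the injectivity locus $S_{\pi_1}\setminus T_1'$, forcing $g(x)=x$. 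Thus $g$ agrees with the identity on a dense open subset of the connected manifold $N$, and since both maps are continuous they coincide on all of $N$. The main obstacle I anticipate is precisely this step: one must know that over the isomorphism locus the fibres of $\pi_1$ are single points lying in the good locus, so that commuting with $\pi_1$ pins down $g$; this is why the minimality of $T_1'$---equivalently $\pi_1^{-1}(S\setminus T_1)=S_{\pi_1}\setminus T_1'$, valid for proper bimeromorphic morphisms of smooth surfaces by Zariski's connectedness theorem---is used, together with the density argument needed to propagate the equality across the exceptional locus, where the points $q_1$ and $q_2$ themselves may well sit.
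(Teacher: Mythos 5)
Your proof is correct and takes essentially the same route as the paper's: the biholomorphism-to-equivalence direction is definitional, and for the converse both you and the paper compose the two $S$-maps, use bimeromorphy of $\pi_1$ to force $\beta\circ\alpha=\mathrm{id}$ on a dense open subset, and conclude by density/continuity, with $\alpha$ and $\beta$ then mutually inverse. You actually supply more detail than the paper (which simply asserts the dense-open agreement); the only cosmetic quibble is that saturating the isomorphism locus is most directly justified by replacing $T_1$ with $T_1\cup\pi_1(T_1')$ and using properness of $\pi_1$ (Remmert), rather than by Zariski connectedness.
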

\begin{proof} 
	Let us prove the ``if'' part, as the ``only if'' part is obvious.
	
	By definition, $q_i\ge q_j$ if and only if there is an open neighborhood $U_i\subset S_{\pi_i}$ of $q_i$ and a holomorphic map $\varpi_i:U_i \rightarrow S_{\pi_j}$ with $p_j=\varpi_i(p_i)$ and $\pi_i|_{U_i}=\pi_j \circ \varpi_i$.
	If $p_1\ge p_2$ and $p_2\ge p_1$, then in the open neighborhood $U=U_1 \cap \varpi_1^{-1}(U_2)$ of $p_1$, the map $f=\varpi_2\circ \varpi_1:U\rightarrow U$ satisfies $\pi_1|_U=\pi_1\circ f$. 
	Since $\pi_1:S_{\pi_1}\rightarrow S$ is bimeromorphic, this implies that $f=id_U$ on a dense open subset of $U$, and hence $f$ is the identity map of $U$.
	By symmetry, $\varpi_1$ and $\varpi_2$ are mutual inverses on suitable open neighborhoods, and the claim follows.
\end{proof}

\begin{proposition}\label{pro:infinitely-near-neighbors}
	For every $p\in S$ and every $q\ge p$ there is a unique $n\ge 0$ and a unique point in the $n$-th neighborhood of $p$ equivalent to $q$.
\end{proposition}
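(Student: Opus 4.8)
The plan is to reduce an arbitrary point $q\ge p$ to the tower of iterated blowups over $p$ by tracing $q$ through a factorization of its structure morphism, and then to pin down both $n$ and the representative point by the rigidity principle that two equivalent points lying on a single model must coincide.

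For existence I would start from the remark recorded above that, since $p\in S$, the relation $q\ge p$ forces $p=\pi_2(q)$, where $\pi_2:S_{\pi_2}\to S$ is the proper holomorphic bimeromorphic morphism carrying $q$. Such a morphism of smooth surfaces is a finite composition of point blowups, $\pi_2=\beta_1\circ\cdots\circ\beta_N$, with $\beta_i$ the blowup of a point $x_{i-1}$ on the $i$-th intermediate model (cf. \cite{Cas00}). Tracing the images $q=q_N\mapsto q_{N-1}\mapsto\cdots\mapsto q_0=p$, each step $\beta_i$ is either a local biholomorphism at $q_i$ --- precisely when $q_{i-1}\ne x_{i-1}$, giving $q_i\approx q_{i-1}$ --- or a genuine blowup centered at $q_{i-1}=x_{i-1}$, giving $q_i$ in the first neighborhood of $q_{i-1}$. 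Writing $i_1<\cdots<i_n$ for the indices of the genuine steps, the local-isomorphism steps collapse under $\approx$, so $q\approx q_{i_n}$ and $q_{i_k}\approx q_{i_{k+1}-1}$ for each $k$. The one auxiliary fact needed here is that a biholomorphism between neighborhoods of two equivalent points lifts, by functoriality of blowing up, to an $S$-biholomorphism of their blowups matching exceptional divisors; granting it, an easy induction on $k$ shows $q_{i_k}$ is equivalent to a point in the $k$-th neighborhood of $p$, and in particular $q\approx q_{i_n}$ is equivalent to a point in the $n$-th neighborhood, proving existence.

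For uniqueness I would argue that both the level and the representative are forced. The key input is rigidity: if two points of a single model $S_\pi$ are equivalent, they are equal. This follows from the preceding Lemma, since an $S$-biholomorphism $\varpi$ between neighborhoods in $S_\pi$ satisfies $\pi\circ\varpi=\pi$, hence restricts to the identity on the dense open locus where $\pi$ is injective, so $\varpi=\mathrm{id}$ by analytic continuation. Now suppose $r$ lies in the $m$-th neighborhood and $r'$ in the $n$-th neighborhood of $p$ with $r\approx r'\approx q$. I would match their defining towers level by level: the level-$0$ center is $p$ for both, and if the towers agree up to level $i$, then the level-$i$ centers below $r$ and $r'$ are equivalent points on the same (already identified) model, hence equal by rigidity, so the towers agree up to level $i+1$. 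Inductively the towers coincide up to level $\min(m,n)$.

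Finally I would conclude. If $m=n$, then $r$ and $r'$ are equivalent points on the identical model, so $r=r'$, giving uniqueness of the point. If instead $m<n$, then on the common tower the level-$m$ point below $r'$ equals $r$, so $r'\ge r$; but $r'$ sits on the exceptional divisor produced by blowing up $r$, and a blowdown admits no local holomorphic section at its center, so $r\not\ge r'$, whence $r'\not\approx r$, contradicting $r'\approx r$. Thus $m=n$, giving uniqueness of the level. I expect the main obstacle to be the existence half --- organizing the factorization of $\pi_2$ and the collapse of the local-isomorphism steps, and especially proving the lifting lemma for blowups of equivalent points --- whereas uniqueness is essentially mechanical once rigidity is in hand.
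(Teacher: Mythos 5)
Your proposal is sound in substance, and it is organized genuinely differently from the paper's proof, so a comparison is worth recording. The paper also factors the structure morphism into point blowups, but instead of tracing $q$ through the factorization as given, it isolates the centers that are images of $q$, observes they are totally ordered by infinitely-near-ness, and invokes the commutativity of blowups at distinct proper points \cite[4.3.1]{Cas00} to rearrange the factorization so that $p$ is blown up first; then both existence and uniqueness come out of a single induction on the number of such centers, with uniqueness supplied by the uniqueness of the factorization through $Bl_p(S)$. You avoid the commutation lemma entirely: your existence half collapses the local-isomorphism steps under $\approx$ and transports the genuine blowup steps onto the standard tower over $p$ by lifting biholomorphisms through blowups (a true and standard fact -- blowing up is local and functorial under isomorphisms -- so it is less of an obstacle than you fear), while your uniqueness half replaces unique factorization by rigidity, level-by-level tower matching, and the no-local-section property of a blowdown. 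What the paper's rearrangement buys is economy: one induction, one rigidity-type input. What your route buys is independence from the commutation lemma and a proof that works with an arbitrary factorization in its given order.

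One step must be tightened, and it is in the half you called ``essentially mechanical.'' In the tower-matching induction you assert that ``the level-$i$ centers below $r$ and $r'$ are \emph{equivalent} points on the same model, hence equal by rigidity.'' Establishing that equivalence is not easier than establishing equality, and rigidity as you stated it (equivalent points on one model are equal) cannot produce it. What you actually need -- and what is true -- is that a point is infinitely near to \emph{at most one} point on any fixed model: if $r\ge s$ and $r\ge s'$ with $s,s'$ both on the model $T$, the two witnessing maps are both local lifts of the structure morphism of $r$'s model through the bimeromorphic map $T\rightarrow S$; they agree on the dense open set where $T\rightarrow S$ is an isomorphism, hence agree near $r$ by analytic continuation, so $s=s'$ outright. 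Since $r\approx r'$ gives $r\ge s'_i$ by transitivity of the pre-order, this closes your induction. Note this is exactly the analytic-continuation argument you already used to derive rigidity (and the same principle the paper packages as uniqueness of the factorization through $Bl_p(S)$), applied to two lifts rather than to a lift and the identity -- so the gap is fillable with tools you already have, but as written the justification points at the wrong lemma.
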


\begin{proof}
	There is a bimeromorphic map $\pi:S_\pi\rightarrow S$ with $q\in S_\pi$ and $\pi(q)=p$. 
	First we observe that 
	\begin{enumerate}[{(}1{)}]
		\item\label{enum:1} $q$ is equivalent to a point of $S$ (which must be $q\approx p$) if and only if $\pi$ is a biholomorphism around $q$.
	\end{enumerate}
	Factor $\pi:S_\pi\rightarrow S$
	as a finite sequence of point blowups, which is possible
	\cite[III,4.4]{BHPvdV},
	%
	and denote $\{p_1,\dots,p_m\}$ the set of centers of blowups \emph{that are images of $q$}.
	We will show, by induction on $m$, that $q$ is equivalent to a point in the $m$-th neighborhood of $p$, and that $m$ and the equivalence classes of $p_1, \dots, p_m$ are independent of the factorization.
	
	The case $m=0$ follows by \eqref{enum:1}, so assume $m>0$ and $\pi$ is not a biholomorphism around $q$.
	The points $p_i$ are totally ordered by infinitely-near-ness, i.e., $p_1<\dots<p_n<q$, with strict infinitely-near-ness $<$ because of \eqref{enum:1}.

	Since $\pi(q)=p$, $p_1$ is equivalent to $p$, and since blowups of distinct proper points commute \cite[4.3.1]{Cas00}, we may rearrange the sequence of blowups so that $p_1=p$ is the center of the first blowup. This rearranging affects neither $m$ nor the equivalence class of the $p_i$.

	Now $\pi$ factors through $Bl_p(S)$, and the image of $q$ in $Bl_p(S)$ is a well defined point $r$. 
	We have $q\ge r$, and the bimeromorphic map $S_\pi \rightarrow Bl_p(S)$ factors as a finite sequence of point blowups, where the centers of blowups that are images of $q$ are $\{p_2,\dots,p_m\}$. 
	By induction it follows that $q$ is equivalent to a point in the $m$-th neighborhood of $p$.
	
	Now assume there is a second factorization, whose centers that are images of $q$ are $\tilde p_1, \dots, \tilde p_{\tilde m}$. As before, $\tilde p_1$ is equivalent to $p$ and we may in fact assume $\tilde p_1=p$. Because the blowup of $p$ is bimeromorphic, the factorization through $Bl_p(S)$ is unique, and the image of $q$ in $Bl_p(S)$ obtained from the second factorization is still $r$. By the induction hypothesis again, $\tilde m=m$ and all centers are equivalent.
\end{proof}

\begin{corollary}\label{cor:infinitely-near-neighbors}
	There is a bijection
	\[
	\frac{
		\left\{q \text{ s.t. }q\ge p \right\}}{\approx} 
	\longleftrightarrow
	\left\{\tilde q \text{ in some infinitesimal neighborhood of }p \right\}.
	\]
\end{corollary}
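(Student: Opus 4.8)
The plan is to read the bijection directly off Proposition \ref{pro:infinitely-near-neighbors}, which already supplies both the existence and the uniqueness that the corollary needs; no new geometric input should be required. I would define a map $\Phi$ from left to right by sending an equivalence class $[q]$ of a point $q\ge p$ to the unique point $\tilde q$ lying in some (necessarily unique) infinitesimal neighborhood of $p$ with $\tilde q\approx q$, whose existence is guaranteed by the Proposition. The first thing to check is that $\Phi$ is well defined, i.e.\ independent of the chosen representative: if $q\approx q'$, then $\tilde q\approx q\approx q'$, so applying the uniqueness clause of the Proposition to $q'$ shows that the point attached to $q'$ is again $\tilde q$.

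In the other direction I would take the map $\Psi$ sending a point $\tilde q$ in an infinitesimal neighborhood of $p$ to its class $[\tilde q]$; since every such $\tilde q$ satisfies $\tilde q\ge p$, this does land in the quotient on the left. The two maps are then mutually inverse. For $\Psi\circ\Phi=\operatorname{id}$: given $[q]$ we have $\Phi([q])=\tilde q$ with $\tilde q\approx q$, hence $\Psi(\tilde q)=[\tilde q]=[q]$. For $\Phi\circ\Psi=\operatorname{id}$: given $\tilde q$ in the $n$-th neighborhood, $\tilde q$ is itself a point in an infinitesimal neighborhood of $p$ equivalent to itself, so by the uniqueness in the Proposition it is exactly the point selected by $\Phi([\tilde q])$.

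The only subtlety --- and really the only place where something must be verified rather than merely bookkept --- is that $\Psi$ is injective, equivalently that two distinct points in (possibly different) infinitesimal neighborhoods of $p$ are never $\approx$-equivalent. But this too is immediate from the uniqueness in the Proposition: if $\tilde q_1\approx\tilde q_2$ with $\tilde q_i$ in the $n_i$-th neighborhood, then uniqueness of the neighborhood index forces $n_1=n_2$, and uniqueness of the point within that neighborhood forces $\tilde q_1=\tilde q_2$. I expect no genuine obstacle here: the entire content of the corollary is already packaged in Proposition \ref{pro:infinitely-near-neighbors}, and what remains is the routine verification that the two evident maps $\Phi$ and $\Psi$ are inverse bijections.
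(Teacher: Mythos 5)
Your proof is correct and matches the paper's treatment: the paper states this corollary with no separate proof, regarding it as an immediate repackaging of the existence-and-uniqueness statement of Proposition \ref{pro:infinitely-near-neighbors}, which is exactly what your two maps $\Phi$ and $\Psi$ carry out. Your final paragraph on injectivity of $\Psi$ is redundant (it already follows from $\Phi$ and $\Psi$ being mutual inverses), but harmless.
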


\begin{definition}
	In the sequel, we shall identify equivalent points; thus for us a point infinitely near to $p$ is by definition an equivalence class of points in bimeromorphic models of $S$ mapping to $p$. 
	Infinitely-near-ness is then a partial order on the set of points infinitely near to $p$. 
	\emph{An infinitely near point of} $S$ is a point infinitely near to some $p\in S$.
	If $q$ is a point in a bimeromorphic model of $S$, we will denote the infinitely near point it determines by the same symbol $q$, recalling that equality of infinitely near points means equivalence of points in models of $S$.
\end{definition}

We also observe that it follows from the proof of the previous proposition that a point in the $n$-th neighborhood of $p$ is infinitely near to exactly $n+1$ points infinitely near to $p$ (including $p$ and $q$).
Sometimes we call points $p\in S$ \emph{proper points} of $S$.

\begin{definition}
\emph{A cluster based at} $p$ is a finite set of points $K$ infinitely near to $p$ such that, for every $q\in K$, if $q'$ is a point infinitely near to $p$ and $q$ is infinitely near to $q'$, then $q' \in  K$. \emph{A multi-cluster} is a finite union of clusters based at distinct points of $S$. 
\end{definition}

By Proposition \ref{pro:infinitely-near-neighbors} and its Corollary \ref{cor:infinitely-near-neighbors}, our notion of cluster agrees with the one in \cite{Cas00}.

A curve $C$ is said to go through the infinitely near point $q\in S_\pi\rightarrow S$ if its strict transform in $S_\pi$ goes through $q$. The property is well defined, because clearly if $q'\in S_{\pi'}\rightarrow S$ is equivalent to $q'$, then the strict transform of $C$ in $S_\pi$ goes through $p$ if and only if the strict transform of $C$ in $S_{\pi'}$ goes through $p'$; in the sequel we implicitly leave such routine checks to the reader. 
For instance, the multiplicity of $C$ at $q$, denoted $\mult_q C$, is well defined as the multiplicity of its strict transform.
For every curve $C$ on $S$, the set of all points infinitely near to $p$, where $C$ has multiplicity $>1$, is a cluster \cite[3.7.1]{Cas00}, which we denote $\MSing_p(C)$, and the set of all points, proper and infinitely near, where $C$ has multiplicity $>1$, is a multi-cluster $\MSing(C)$. 
\begin{remark}
	The multi-cluster $\MSing(C)$ just defined may be strictly contained in the multi-cluster of \emph{singular points} of $C$ \cite[Section 3.8]{Cas00}, which is formed by all points that have to be blown up to obtain an \emph{embedded resolution} (also called \emph{good resolution} in the literature) of $C$.
\end{remark}

Given a multi-cluster $K$ on $S$, one may \emph{blow up all points in $K$}, as follows. 
First blow up $S$ with center at one of the proper points $p$ of $K$, then perform successive blowups on the resulting surfaces, with centers which belong to $K$ and are proper points of the surfaces obtained by previous blowups. 
Subsequent centers may be chosen in any order compatible with the natural ordering by infinitely-near-ness (if $q_1$ precedes $q_2$, then $q_1$ must be blown up first); the final surface and bimeromorpic map obtained as the composition of all blowups, which will be denoted $\pi_K:S_K\rightarrow S$, are independent on the order of these blowups -- up to unique $S$-biholomorphism (a detailed proof in the case of a single cluster can be found in \cite[Proposition 4.3.2]{Cas00}, the general case follows easily). 

In fact, every bimeromorphic model of $S$ is the blowup of all points in a convenient cluster: if $\pi\colon S_\pi\rightarrow S$ is a bimeromorphic map, for every factorization of $\pi$ as a finite sequence of point blowups, the centers of the blowups clearly form a multi-cluster $K$.	
The proof of Proposition \ref{pro:infinitely-near-neighbors} can be easily modified to show that this multi-cluster is independent of the factorization (i.e., two distinct factorizations consist of the same number of blowups, and the centers are equivalent) and there is a unique $S$-biholomorphism $S_\pi\cong S_K$.

We denote by $E_q$ (respectively, $\tilde E_q$) the pullback or total transform (respectively, the strict transform) in $S_K$ of the exceptional divisor of the blowup centered at $q$. It is not hard to see that $q_1$ precedes $q_2$ if and only if $E_{q_2}-E_{q_1}$ is an effective divisor.

\begin{definition}
	Let $C \subset S$ be a reduced curve, and let $K$ be a multi-cluster on the surface $S$. \emph{The Harbourne constant of} $C$ at $K$ is
	$$H(C,K) = \frac{C^{2} - \sum_{q\in K} \mult_{q}(C)^{2}}{|K|}.$$
	Note that the strict transform of $C$ on the blowup $S_K$ of all points in $K$ is
	\begin{equation}\label{eq:strict-transform}
	\tilde{C}=\pi_K^*(C)-\sum_{q\in K}\mult_{q}(C) E_q
	\end{equation}so the numerator in the definition of $H(C,K)$ is the self-intersection of $\tilde{C}$ in $S_K$.
	We define \emph{the Harbourne index} of $C$ as its Harbourne constant at its cluster of multiple points, i.e.,
	\[h(C)=H(C,\MSing(C)).\]
\end{definition}

If the singularities of $C$ are ordinary, then the multi-cluster $\MSing(C)$ consists of proper points of $S$ only, so this definition of Harbourne index extends the one recalled in the introduction.

\begin{remark}\label{rmk:h-4}
	Fix a reduced curve $C$ on $S$.
	For every multi-cluster $K$ and every point (proper or infinitely near) $q$ of $S$, let $K+q$ be the minimal multi-cluster which contains both $K$ and $q$. 
	Note that all points preceding $q$ which are not in $K$ belong to $(K+q)\setminus K$.
	Assume $K$ is such that there is a point $q\in \MSing(C) \setminus K$. 
	Then $C$ has multiplicity at least 2 at all points in $(K+q)\setminus K$, i.e., $\mult_q(C)^2\ge4$ for every such point. 
	Therefore, if  $H(C,K)\ge -4$, then 
	$$H(C,K+q)\le\frac{C^2-\sum_{q\in K}\mult_q(C)^2-4|(K+q)\setminus K|}{|K|+|(K+q)\setminus K|}\le H(C,K),$$ 
	and by induction on $|\MSing(C)|-|K|$,  if  $H(C,K)\ge -4$, then $H(C,K)\ge H(C,\MSing(C))$.
	On the other hand, in the case of $S = \mathbb{P}^{2}$  every known value $H(C,K)$ is larger than $-4$,	so in all known cases for plane curves the cluster that gives the smallest value for $H(C,K)$ is $K=\MSing(C)$, and the value is $h(C)$.
\end{remark}


\subsection{Singularities of curves in smooth surfaces}\label{sec:singularities}

%
%
%

By assigning integral multiplicities $\nu=\{\nu_q\}_{q\in K}$ to the points of a cluster $K$ one gets a \emph{weighted cluster}.
\begin{definition}
A weighted cluster $\mathcal{K}=(K,\nu)$ is \emph{consistent} if there exist germs of curve in $S$ whose strict transform at each $q\in K$ has multiplicity exactly $\nu_q$. The cluster $\MSing_p(C)$ of multiple points on $C$ infinitely near to $p\in S$, weighted with the multiplicity of $C$ at each point, will be denoted by $\WSing_p C$.
\end{definition}
Let $\mathcal{K}=(K,\nu)$ be a given weighted cluster of points infinitely near to $p$, and $\pi_K:S_K\rightarrow S$ the blowup of $S$ at all points of $K$, introduced above. 
Continuing to denote by $E_q$ the total transform in $S_K$ of the exceptional divisor above each $q\in K$, we associate to the weights $\nu$ an effective divisor on $S_K$, $D_{\mathcal{K}}=\sum \nu_q E_q$. 
\begin{definition}
A curve $C$ on $S$ is said to go through the weighted cluster $\mathcal{K}=(K,\nu)$ if $\pi_K^*(C)-D_{\mathcal{K}}$ is effective in $S_K$ (see \cite[Chapter 4]{Cas00}).
\end{definition}
In particular, if the strict transform of a curve $C$ at every $q\in K$ has multiplicity equal to $\nu_q$, then $C$ goes through $\mathcal{K}$. 
The complete ideal $\mathcal{H}_{\mathcal{K}}={\pi_K}_*(-D_{K,\nu})\subset \cO_{S,q}$ is formed by the equations of germs of curve at $p$ going through $\mathcal{K}$. 
\begin{definition}
The cluster $(K,\nu)$ is consistent if and only if it satisfies Enriques' \emph{proximity inequalities}, which can be stated as the non-positivity of the intersection numbers $D_{\mathcal{K}}\cdot \tilde E_q\le 0$ on $S_K$ for every $q \in K$, see  \cite[sections 4.2 and 4.5]{Cas00}. 
\end{definition}
The self-intersection of $\mathcal{K}$ is defined as the opposite of the self-intersection of $D_{\mathcal{K}}$: $\mathcal{K}^2=\sum \nu_q^2$. 
If $\mathcal{K}$ is consistent, then its self-intersection equals the intersection multiplicity at $q$ of two sufficiently general germs of curve going through $\mathcal{K}$ with multiplicities equal to $\nu$ \cite[3.3.1 and 4.2.3]{Cas00}.

The notions of weighted cluster, and hence of going through a weighted cluster, consistency and self-intersection carry over to the multi-cluster setting verbatim.
We will denote $\WSing (C)$ the weighted multi-cluster obtained as the union of all weighted clusters $\WSing_p C$, where $p\in S$ is a singular point of $C$. 

\begin{definition}
For an $\mathfrak{m}_p$-primary ideal $I\subset \cO_{S,p}$, the Hilbert-Samuel multiplicity of $I$ is the number $e(I)$ such that, for $k \gg0$,
\[ \dim \frac{\cO_{S,p}}{I^k} = e(I) \frac{k^2}{2!}+O(k). \]
\end{definition}
\begin{lemma}
	The Hilbert-Samuel multiplicity of a complete ideal $\mathcal{H}_{\mathcal{K}}$, where $\mathcal{K} = (K,\nu)$ is a consistent cluster, is $e(\mathcal{H}_{\mathcal{K}})=\mathcal{K}^2$.
\end{lemma}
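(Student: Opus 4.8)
The plan is to compute $e(\mathcal{H}_{\mathcal{K}})$ as an intersection multiplicity and then invoke the fact, recalled just above, that for a consistent weighted cluster the self-intersection $\mathcal{K}^2$ equals the intersection multiplicity at $p$ of two sufficiently general germs going through $\mathcal{K}$. The bridge between the Hilbert--Samuel multiplicity and an intersection number is provided by the theory of reductions of ideals, which applies because $\cO_{S,p}$ is a two-dimensional regular (hence Cohen--Macaulay) local ring with infinite residue field $\bbC$.

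First I would recall two classical facts. Since the residue field is infinite and $\dim \cO_{S,p}=2$, any $\fm_p$-primary ideal $I$ admits a minimal reduction $J=(f,g)$ generated by two sufficiently general elements of $I$, and a reduction preserves the Hilbert--Samuel multiplicity, so $e(I)=e(J)$ (Northcott--Rees). On the other hand $J=(f,g)$ is generated by a system of parameters, i.e.\ a regular sequence, in the Cohen--Macaulay ring $\cO_{S,p}$, and for parameter ideals in a Cohen--Macaulay local ring the multiplicity equals the colength, $e(J)=\dim_{\bbC}\cO_{S,p}/(f,g)$. Combining these, for two sufficiently general $f,g\in I$ one has $e(I)=\dim_{\bbC}\cO_{S,p}/(f,g)$, and the right-hand side is exactly the intersection multiplicity at $p$ of the germs $\{f=0\}$ and $\{g=0\}$.

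It then remains to apply this with $I=\mathcal{H}_{\mathcal{K}}$. By definition $\mathcal{H}_{\mathcal{K}}$ is the complete ideal of germs going through $\mathcal{K}$, so its general elements are equations of general germs going through $\mathcal{K}$; since $\mathcal{K}$ is consistent, a general such germ has multiplicity exactly $\nu_q$ at each $q\in K$. Hence two sufficiently general $f,g\in\mathcal{H}_{\mathcal{K}}$ are two sufficiently general germs going through $\mathcal{K}$, and by the cited result their intersection multiplicity at $p$ is $\mathcal{K}^2=\sum_q\nu_q^2$. Therefore $e(\mathcal{H}_{\mathcal{K}})=\mathcal{K}^2$.

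The main obstacle is to confirm that the two notions of ``sufficiently general'' coincide: the genericity needed for $(f,g)$ to form a minimal reduction of $\mathcal{H}_{\mathcal{K}}$, and the genericity needed for the germs $\{f=0\},\{g=0\}$ to realize the multiplicities $\nu$ and to meet with the expected multiplicity $\mathcal{K}^2$. Each is an open dense condition on the relevant parameter space, so their intersection is again nonempty, but making this precise---for instance, that a general member of $\mathcal{H}_{\mathcal{K}}$ has exact multiplicities $\nu$ and meets another general member transversally away from $K$---is the point requiring care. As an alternative route that sidesteps reductions, one can use Zariski's theorem that powers of complete ideals in a two-dimensional regular local ring remain complete, so that $\mathcal{H}_{\mathcal{K}}^k=(\pi_K)_*\cO_{S_K}(-kD_{\mathcal{K}})$; an application of Riemann--Roch on $S_K$ together with the vanishing of $R^1(\pi_K)_*\cO_{S_K}(-kD_{\mathcal{K}})$ yields $\dim_{\bbC}\cO_{S,p}/\mathcal{H}_{\mathcal{K}}^k=-\tfrac12 D_{\mathcal{K}}^2\,k^2+O(k)$, whence $e(\mathcal{H}_{\mathcal{K}})=-D_{\mathcal{K}}^2=\mathcal{K}^2$.
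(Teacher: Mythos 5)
Your proof is correct, but it takes a genuinely different route from the paper's. The paper stays entirely inside Casas-Alvero's cluster machinery and computes the Hilbert--Samuel function directly: it scales the weights to $k\mathcal{K}=(K,k\nu)$ (still consistent, since the proximity inequalities are homogeneous in the weights), invokes \cite[Theorem 8.4.11]{Cas00} to identify $\mathcal{H}_{\mathcal{K}}^k=\mathcal{H}_{k\mathcal{K}}$, and then reads off
\[
\dim \frac{\cO_{S,p}}{\mathcal{H}_{\mathcal{K}}^k}=\sum_{q\in K} \frac{k\nu_q(k\nu_q+1)}{2}=\mathcal{K}^2\frac{k^2}{2}+O(k)
\]
from the codimension formula for consistent clusters \cite[Proposition 4.7.1]{Cas00}, so that $e(\mathcal{H}_{\mathcal{K}})=\mathcal{K}^2$ follows from the definition. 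Your main argument instead imports commutative algebra: existence of two-generated minimal reductions over the infinite residue field $\bbC$ (Northcott--Rees), invariance of $e$ under reduction, and $e(J)=\dim_{\bbC}\cO_{S,p}/J$ for parameter ideals in a Cohen--Macaulay ring, which converts $e(\mathcal{H}_{\mathcal{K}})$ into the intersection multiplicity of two general members of $\mathcal{H}_{\mathcal{K}}$; consistency then enters through the fact recalled in the paper just before the lemma (\cite[3.3.1 and 4.2.3]{Cas00}) that this intersection multiplicity equals $\mathcal{K}^2$. What your route buys is that it avoids both the product identity $\mathcal{H}_{\mathcal{K}}^k=\mathcal{H}_{k\mathcal{K}}$ (a Zariski-type completeness-of-powers statement) and the codimension formula; what it costs is the reduction-theoretic input and the genericity-matching issue you correctly flag --- which is indeed harmless, since both requirements (generating a minimal reduction, and going through $\mathcal{K}$ with exact multiplicities and no further common points) are dense open conditions on pairs of elements of $\mathcal{H}_{\mathcal{K}}$ modulo a sufficiently high power of $\fm$, so a common pair exists. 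Note finally that your ``alternative route'' at the end is essentially the paper's proof in disguise: the completeness of powers you would quote from Zariski is exactly \cite[Theorem 8.4.11]{Cas00}, and Riemann--Roch together with the vanishing of $R^1{\pi_K}_*$ is the standard argument behind the codimension formula that the paper cites instead.
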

\begin{proof}
	For every positive integer $k$, let $k\mathcal{K}=(K,k\nu)$ be the weighted cluster consisting of the same points as $\mathcal{K}$ and all weights multiplied by $k$. 
	It satisfies the proximity inequalities, so it is consistent.
	By \cite[Theorem 8.4.11]{Cas00}, $\mathcal{H}_{\mathcal{K}}^k=\mathcal{H}_{k\mathcal{K}}$. 
	On the other hand, by the codimension formula for consistent clusters
	\cite[Proposition 4.7.1]{Cas00}, one has
	\[\dim \frac{\cO_{S,p}}{\mathcal{H}_{K,k\nu}}=
	\sum_{p\in K} \frac{k\nu_p(k\nu_p+1)}{2}
	=\mathcal{K}^2\frac{k^2}{2} + O(k).\qedhere\]
\end{proof}

\begin{lemma}\label{lem:H-passing}
	Let $S$ be a smooth projective surface, 
	let $p_1, \dots, p_r \in S$, and for every $i=1,\dots, r$ we denote by $\mathcal{K}_i=(K_i,\nu^{(i)})$ a consistent weighted cluster of points infinitely near to $p_i$. Let $\mathcal{K}=(K,\nu)$ be the multi-cluster formed by all these clusters and $C\subset S$ a reduced curve going through $\mathcal{K}$. Then
	\[H(C,{K}) \le \frac{C^2 -\mathcal{K} ^2}{|K|}\]
\end{lemma}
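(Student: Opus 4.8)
The plan is to unwind both sides to an inequality between self\nobreakdash-intersection numbers on the blowup $S_K$, and then exploit the intersection theory of the exceptional locus together with the proximity inequalities that encode consistency. Writing $m_q=\mult_q(C)$ and clearing the common positive denominator $|K|$, the asserted inequality $H(C,K)\le (C^2-\mathcal{K}^2)/|K|$ is, after cancelling the term $C^2$, equivalent to
\[
\mathcal{K}^2=\sum_{q\in K}\nu_q^2\ \le\ \sum_{q\in K} m_q^2 .
\]
So everything reduces to comparing these two sums of squares.

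Next I would pass to $S_K$ and introduce the two exceptional divisors $D_{\mathcal{K}}=\sum_{q\in K}\nu_q E_q$ and $D_C=\sum_{q\in K} m_q E_q$. By \eqref{eq:strict-transform} one has $D_C=\pi_K^*(C)-\tilde C$, and since the total transforms satisfy $E_q\cdot E_{q'}=-\delta_{q,q'}$, the two sums of squares are exactly $\mathcal{K}^2=-D_{\mathcal{K}}^2$ and $\sum_{q} m_q^2=-D_C^2$. Thus the target inequality becomes the single statement $D_C^2\le D_{\mathcal{K}}^2$.

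The main step is to control the difference $F:=D_C-D_{\mathcal{K}}=\sum_{q}(m_q-\nu_q)E_q$. Because $C$ goes through $\mathcal{K}$, the divisor $\pi_K^*(C)-D_{\mathcal{K}}=\tilde C+F$ is effective; as $\tilde C$ is an effective divisor with no exceptional components while $F$ is supported on the exceptional locus, this forces $F$ itself to be effective, i.e.\ $F=\sum_{q} c_q\,\tilde E_q$ with all $c_q\ge 0$. Here I expect the only genuinely delicate point: one must resist the temptation to conclude $m_q\ge\nu_q$ coefficientwise in the $E_q$\nobreakdash-basis (which is false in general for a curve merely going through a weighted cluster), and instead keep track of effectivity in the strict\nobreakdash-transform basis $\{\tilde E_q\}$.

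With $F$ effective I would finish by expanding $D_C^2=(D_{\mathcal{K}}+F)^2=D_{\mathcal{K}}^2+2\,D_{\mathcal{K}}\cdot F+F^2$ and bounding the two correction terms. Consistency of each $\mathcal{K}_i$ gives the proximity inequalities $D_{\mathcal{K}}\cdot \tilde E_q\le 0$ for every $q\in K$, where I use that exceptional divisors over distinct base points $p_i$ are disjoint, so that $D_{\mathcal{K}}\cdot\tilde E_q=D_{\mathcal{K}_i}\cdot\tilde E_q$ for $q\in K_i$ and no cross\nobreakdash-terms arise. Hence $D_{\mathcal{K}}\cdot F=\sum_{q} c_q\,(D_{\mathcal{K}}\cdot \tilde E_q)\le 0$. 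Finally the orthonormality $E_q\cdot E_{q'}=-\delta_{q,q'}$ gives $F^2=-\sum_{q}(m_q-\nu_q)^2\le 0$. Combining the two bounds yields $D_C^2\le D_{\mathcal{K}}^2$, which is precisely the inequality $\mathcal{K}^2\le\sum_{q}\mult_q(C)^2$ sought, and the lemma follows.
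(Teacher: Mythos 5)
Your proof is correct, and it takes a genuinely different route from the paper's. Both arguments reduce the lemma to the key inequality $\mathcal{K}^2=\sum_{q\in K}\nu_q^2\le\sum_{q\in K}\mult_q(C)^2$, and both exploit the same first observation: writing $\mathcal{K}'$ for the cluster $K$ weighted by the effective multiplicities $\mult_q(C)$, the hypothesis that $C$ goes through $\mathcal{K}$ forces $D_{\mathcal{K}'}-D_{\mathcal{K}}$ (your $F$) to be effective, precisely because the strict transform $\tilde C$ carries no exceptional components. From there the paths diverge. The paper translates effectivity of $D_{\mathcal{K}'}-D_{\mathcal{K}}$ into an inclusion of complete ideals $\mathcal{H}_{\mathcal{K}'}\subset\mathcal{H}_{\mathcal{K}}$ in each local ring $\cO_{S,p_i}$, compares their Hilbert--Samuel multiplicities, and invokes the preceding lemma ($e(\mathcal{H}_{\mathcal{K}})=\mathcal{K}^2$ for consistent clusters, proved via the codimension formula) to conclude $(\mathcal{K}')^2\ge\mathcal{K}^2$; note this requires consistency of \emph{both} clusters, with consistency of $\mathcal{K}'$ supplied by the fact that effective multiplicities of an actual curve always satisfy the proximity inequalities. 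You instead stay entirely on $S_K$ and argue numerically: $F$ effective in the $\tilde E_q$-basis, $D_{\mathcal{K}}\cdot F\le 0$ by the proximity inequalities (which the paper takes as the very definition of consistency), and $F^2\le 0$ by negative definiteness of the exceptional lattice in the orthogonal basis $\{E_q\}$. Your route is more elementary and self-contained relative to the paper's own definitions --- it needs no complete-ideal or Hilbert--Samuel machinery, and uses consistency only of $\mathcal{K}$, not of $\mathcal{K}'$ --- while the paper's route connects the statement to the Zariski theory of complete ideals, which is the toolkit it reuses elsewhere (e.g.\ for pullback clusters). Your cautionary remark is also well taken: coefficientwise comparison $\mult_q(C)\ge\nu_q$ in the $E_q$-basis can genuinely fail (e.g.\ a node at $p$ goes through the consistent cluster assigning multiplicity $1$ to $p$ and to a free point in its first neighborhood not lying on either branch), so tracking effectivity in the $\tilde E_q$-basis is exactly the right move.
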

\begin{proof}
	Denote by $\mu_q^{(i)}$ the multiplicity of the strict transform of $C$ at the point $q\in K_i$. Then the clusters $\mathcal{K}_i'=(K_{i},\mu^{(i)})$, for $i=1\dots,r$, are consistent. Moreover,
	the strict transform of $C$ on the blowup $\pi_K:S_K\rightarrow S$ at all points of the multi-cluster $K$ is 
	\(\tilde C=\pi^*(C)-D_{\mathcal{K}'}\)
	whereas, since $C$ goes through all clusters with multiplicities $\nu^{(i)}$, 
	the divisor
	\(\pi^*(C)-D_{\mathcal{K}}\) 
	is effective. It follows that for each $i$, $D_{\mathcal{K}_i'}\ge D_{\mathcal{K}_i}$, and therefore there is an inclusion of ideals 
	$$\mathcal{H}_{\mathcal{K}_i'}=\pi_{K_i *}(-D_{\mathcal{K}_i'})\subset \pi_{K_i *}(-D_{\mathcal{K}_i})=\mathcal{H}_{\mathcal{K}_i}$$
		in $\cO_{S,p_i}$ for $i=1,\dots, r$. Thus for every $k$, $\mathcal{H}_{\mathcal{K}_i'}^k\subset \mathcal{H}_{\mathcal{K}_i}^k$, so
	the Hilbert-Samuel multiplicities satisfy $e(\mathcal{H}_{\mathcal{K}_i'})\ge e(\mathcal{H}_{\mathcal{K}_i})$, and by the lemma above $(\mathcal{K}_i')^2\ge \mathcal{K}_i^2$. Finally, this implies
	\[H(C,{K}) = \frac{C^2 -\mathcal{K'} ^2}{|K|}\le \frac{C^2 -\mathcal{K} ^2}{|K|}\]
	as wanted.
\end{proof}

\section{Harbourne constants under ramified morphisms}

Our next goal is to describe the singularities of preimages of curves under ramified holomorphic surface maps, in enough detail to first show that $H$-constants can only drop under such a process on projective surfaces, and secondly to provide new examples of plane curve arrangements in the complex projective plane with very negative $H$-indices. 

\subsection{The pullback cluster}\label{sec:pullback_cluster}

Fix for this section the following notation: $S, S'$ are two smooth complex surfaces, $f:S\rightarrow S'$ is a dominant holomorphic map (i.e., $f(S)$ is not contained in a curve of $S'$), $p\in S$ and $p'=f(p)$ are points, and we are interested in the singularity at $p$ of the pullback (or preimage) of a curve $C'\subset S'$ whose singularity at $p'$ is known.

Take $x, y$ and $u, v$ as local coordinates on $S$ and $S'$ with origins at $p$
and $p'$, respectively, and assume that on a suitable open neighborhood $U$ at $p$, $f:S\rightarrow S'$ is given by the equalities $u = f_1 (x, y)$, $v = f_2 (x, y)$,
where $f_i$'s are non-invertible convergent series in $x, y$. 
The \emph{multiplicity} of $f$ at $p$, denoted by $\nu_p(f)$ or simply $\nu(f)$ if there is no risk of ambiguity, is the minimum of the orders of vanishing of $f_1$ and $f_2$ at $p$.

Put $d=\gcd(f_1,f_2)$, as elements in $\mathcal{O}_{S,p}$. The pencil of curves in $U$ defined by $\{C_\alpha:\alpha_1 f_1+\alpha_2 f_2=0\}$, $\alpha=\alpha_1/\alpha_2\in \bbC\cup\{\infty\}$, formed by the pullbacks of the curves $\alpha_1 u + \alpha_2 v=0$, has a fixed part $F:d=0$ (which might be empty) and a variable part $\{D_\alpha:\alpha_1 f_1/d+\alpha_2 f_2/d=0\}$. 
We call $F$ the \emph{curve contracted to $p'$}, as $f(F)=p'$.
On the other hand, the variable part of the pencil has, like every pencil without fixed part, a weighted cluster of base points which consists of the points and multiplicities shared by all but finitely many curves in the pencil \cite[7.2]{Cas00}.
This cluster is called the \emph{cluster of base points} of $f$, and denoted $\BP_p(f)$, or simply $\BP(f)$ if no confusion is likely.
By definition, $\BP(f)$ is a consistent cluster, and for every curve $C'\subset S'$ through $p'$, the pullback $f^*(C')$ goes through $\BP(f)$ (if $F$ is nonempty, then $f^*(C')-F$ goes through $\BP(f)$).
It may happen that only finitely many $D_\alpha$ go through $p$; in this case the cluster $\BP(f)$ is empty. 
The multiplicity of $f$ satisfies $\nu(f)=\nu_p+\mult_{p}(F)$, where $\nu_p$ is the multiplicity of $p$ in $\BP(f)$.


\begin{remark}\label{rmk:BP-discrete}
	Given a dominant holomorphic map $f:S\rightarrow S'$ and a point $p'\in S'$. The set of points $p\in S$ such that $f(p)=p'$ and $\BP_p(f)$ is nonempty is discrete.
	Indeed, let $p$ satisfy $f(p)=p'$, and write $f$ in local coordinates as above, $(f_1(x,y),f_2(x,y))$ in a neighborhood $U$ of $p$. Let $d=\gcd(f_1,f_2)$.
	Since $f_1/d$, $f_2/d$ have no common factor in $\mathcal{O}_{S,p}$, their common zeros in a possibly smaller neighborhood $U'\subset U$ are a discrete set, and for $q\in U'$, the cluster $\BP_{q}$ is nonempty if and only if $q$ is a common zero of $f_1/d$ and $f_2/d$.

	Note that if $d$ is invertible in $\mathcal{O}_{S,p}$, then $p$ is an isolated preimage. 
\end{remark}

Let $\pi_{p'}:{S}_{p'}'\rightarrow S'$ be the blowup centered at $p'$. 
It is natural to describe $\BP(f)$ as the cluster of points which need to be blown up to resolve the indetermination at $p$ of the ``meromorphic map'' $\tilde{f}=\pi_{p'}^{-1}\circ f:S \dashrightarrow S_{p'}$; we include a proof for completeness, since this characterization will be the starting point for our definition of the pullback cluster.


\begin{lemma}\label{lem:BP-as-resolution}
	Keeping the same notation as above, assume $f(p)=p'$ and let $U$ be a neighborhood of $p$ such that $\BP_{q}(f)$ is empty for all $q\in U$, $q\ne p$.
	There is a unique local lift $\tilde{f}$ of $f$ to the blowup of the points of $\BP(f)$ which makes the following diagram commute:
	\[\begin{tikzcd}
	U_{\BP(f)} \rar{\tilde{f}} \dar[swap]{\pi_{\BP(f)}}  & {S}'_{p'} \dar{\pi_{p'}} \\
	U \rar[swap]{f} & S'  \\
	\end{tikzcd}
	\]
	Moreover, if $\pi:U_\pi\rightarrow U$ is a bimeromorphic model of $U$ which admits a lift $\bar f:U_\pi\rightarrow S'_{p'}$, then $\pi$ factors through $\pi_{\BP(f)}$.
	
	The weights $\nu_q$ of the base points are determined by the formula
	\begin{equation}\label{eq:base_cluster_multiplicities}
	\tilde{f}^*(E_{p'})=\pi_{\BP(f)}^*(F)+ \sum_{q\in \BP(f)} \nu_q E_q.
	\end{equation}
\end{lemma}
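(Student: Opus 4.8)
The plan is to realize the meromorphic map $\tilde f=\pi_{p'}^{-1}\circ f$ as a pair of maps and to recognize its indeterminacy as the base locus of the pencil defining $\BP(f)$. Writing the blowup as the incidence variety $S'_{p'}=\{((u,v),[a:b])\in S'\times\bbP^1 : ub=va\}$, the lift is forced to be $\tilde f=(f,[f_1:f_2])$, and since $[f_1:f_2]=[f_1/d:f_2/d]$ wherever $d\ne 0$, its only indeterminacy comes from the $\bbP^1$-component $g=[f_1/d:f_2/d]$. By hypothesis the common zeros of $f_1/d$ and $f_2/d$ in $U$ reduce to $p$, so $g$ is the rational map attached to the pencil with variable part $\{D_\alpha\}$, whose base cluster is exactly $\BP(f)$. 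First I would invoke the standard resolution of a pencil by blowing up its base points (\cite[7.2]{Cas00}): after $\pi_{\BP(f)}$ the map $g$ becomes a morphism $\tilde g\colon U_{\BP(f)}\to\bbP^1$, whence $\tilde f=(f\circ\pi_{\BP(f)},\tilde g)$ is a morphism landing in $S'_{p'}$ (the incidence relation $f_1\cdot(f_2/d)=f_2\cdot(f_1/d)$ holds identically) and making the square commute, because $\pi_{p'}\circ\tilde f=f\circ\pi_{\BP(f)}$ by construction. Uniqueness is immediate, since two lifts agree on the dense open set where $f\ne p'$ and $S'_{p'}$ is separated.

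For the universal property, I would start from a model $\pi\colon U_\pi\to U$ carrying a lift $\bar f$ and compose it with the second projection to obtain a morphism $\mathrm{pr}_2\circ\bar f\colon U_\pi\to\bbP^1$ extending $g$. A point of $\BP(f)$ not occurring among the centers of $\pi$ would remain a common point of the strict transforms of infinitely many members $D_\alpha$ on $U_\pi$, contradicting that $\mathrm{pr}_2\circ\bar f$ is a morphism; hence $\BP(f)$ is a subcluster of the cluster blown up by $\pi$, and $\pi$ therefore factors through $\pi_{\BP(f)}$.

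The weight formula I would obtain by pulling back, through the morphism $\tilde f$, the relation $\pi_{p'}^*(\ell_\alpha)=\tilde\ell_\alpha+E_{p'}$ valid for a general line $\ell_\alpha$ through $p'$ (which has multiplicity one at $p'$). Using commutativity of the square and $f^*(\ell_\alpha)=C_\alpha=F+D_\alpha$, the left-hand side becomes $(f\circ\pi_{\BP(f)})^*(\ell_\alpha)=\pi_{\BP(f)}^*(F)+\pi_{\BP(f)}^*(D_\alpha)$, while the right-hand side is $\tilde f^*(\tilde\ell_\alpha)+\tilde f^*(E_{p'})$. The key identification is $\tilde f^*(\tilde\ell_\alpha)=\tilde D_\alpha$, the strict transform of $D_\alpha$ on $U_{\BP(f)}$: indeed $\tilde\ell_\alpha$ is a general fibre of $\mathrm{pr}_2$, so $\tilde f^*(\tilde\ell_\alpha)$ is the general fibre of $\tilde g$, i.e.\ the general member of the now base-point-free pencil, which is reduced by Bertini's theorem and hence equals $\tilde D_\alpha$ without extra multiplicity. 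Substituting $\tilde D_\alpha=\pi_{\BP(f)}^*(D_\alpha)-\sum_{q}\nu_q E_q$ and solving yields $\tilde f^*(E_{p'})=\pi_{\BP(f)}^*(F)+\sum_q \nu_q E_q$, as claimed.

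I expect the main obstacle to be the minimality built into the universal property of the second step, namely the precise statement that any model on which the pencil becomes a morphism must have blown up the \emph{entire} base cluster $\BP(f)$ (including its infinitely near points), together with the clean identification $\tilde f^*(\tilde\ell_\alpha)=\tilde D_\alpha$ in the last step, where one must verify that the general fibre of $\tilde g$ is reduced and equals the strict transform of $D_\alpha$ rather than some multiple of it.
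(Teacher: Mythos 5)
Your proposal is correct, but it follows a genuinely different route from the paper's proof. The paper works entirely locally and by induction on $|\BP(f)|$: it first shows by a chart computation (using coordinates $(u/v,v)$ on $S'_{p'}$) that $\BP(f)$ is empty if and only if a local lift exists near $p$, then glues local lifts over $U_{\BP(f)}$ using the compatibility $q\in\BP_p(f)\Leftrightarrow q\in\BP_q(f\circ\pi)$ (Remark \ref{BP-after-blowup}), obtains the universal property inductively from the non-existence of a lift near any point of $\BP(f)$, and proves the weight formula by another induction, computing the vanishing order of $\pi_p^*(f_2)$ along $E_p$ at a well-chosen point of the first neighborhood. You instead exploit the fibration structure $\mathrm{pr}_2\colon S'_{p'}\to\bbP^1$ of the blowup as an incidence variety: existence of $\tilde f$ is outsourced to the standard resolution of a pencil by blowing up its base cluster (\cite[7.2]{Cas00}), the universal property follows because an unresolved base point would lie on infinitely many fibres of the morphism $\mathrm{pr}_2\circ\bar f$, and the weight formula drops out of pulling back the divisor identity $\pi_{p'}^*(\ell_\alpha)=\tilde\ell_\alpha+E_{p'}$. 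Your version is shorter and more conceptual, and the weight formula in particular becomes a one-line divisor computation instead of an induction; the price is reliance on the precise strength of the cited pencil-resolution theory, plus two verifications you correctly flag or elide: (i) in the universal property you should take a \emph{minimal} point of $\BP(f)$ not among the centers of $\pi$ — only then is it an actual point of $U_\pi$ (predecessor-closedness of clusters guarantees this and also gives the final factorization by reordering blowups); (ii) the identification $\tilde f^*(\tilde\ell_\alpha)=\tilde D_\alpha$ needs that no exceptional component occurs in a general fibre of $\tilde g$, which follows since a fixed curve can lie in at most one fibre of a morphism, together with reducedness of the general $D_\alpha$ (Bertini). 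The paper's approach, by contrast, is self-contained and purely local, and its intermediate steps (the ``$\BP$ empty iff lift exists'' criterion and Remark \ref{BP-after-blowup}) are exactly the tools reused later in Lemma \ref{lem:pullback-as-BP} and Proposition \ref{pro:submultiplicative}.
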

\begin{remark}\label{BP-after-blowup}
	If $q$ is a point infinitely near to $p$, which belongs as a proper point to the model $S_\pi\rightarrow S$, then $(f\circ \pi)(q)=p'$. 
	It then follows from the definition that $q\in\BP_p(f)$ if and only if $q\in \BP_q(f\circ \pi)$ (see also \cite[section 7.2]{Cas00}).
\end{remark}
\begin{proof}
	Write $f$ in local coordinates as above, $(u,v)=(f_1(x,y),f_2(x,y))$, and let $d=\gcd(f_1,f_2)$.
	
	Assume first that $\BP(f)$ is empty, which by definition means that either $f_1/d$ or $f_2/d$ does not vanish at $p$; without loss of generality we assume $f_2/d$ does not vanish.
	Consider the chart $V$ of $S'_{p'}$ which admits $(u/v,v)$ as local coordinates, and let $U'\subset U$ be the open set where $f$ is given by $(f_1, f_2)$ and $f_2/d$ does not vanish.
	Then the restriction $f|_{U'}$ lifts uniquely to $\tilde{f}:U'\rightarrow V$, given in coordinates as $(u/v,v)=(f_1(x,y)/f_2(x,y),f_2(x,y))$.
	
	Conversely, if there is a neighborhood $U'$ of $p$ where $f$ lifts to $\tilde f:U'\rightarrow S'_{p'}$, then $\tilde{f}(p)$ belongs either to the chart where $(u/v,v)$ are local coordinates, or to the chart where $(u,v/u)$ are local coordinates; without loss of generality we assume it is the first case.
	Then $(u/v)\circ \tilde f=f_1(x,y)/f_2(x,y)$ is a regular function in a neighborhood of $p$, so $f_2/d$ does not vanish and $\BP(f)$ is empty.
	
	So $\BP(f)$ is empty if and only if there is a lift $\tilde{f}:U'\rightarrow S'_{p'}$ in some neighborhood $U'$ of $p$. 
	We now observe that uniqueness of a lift $\tilde f$ if it exists is clear because $\pi_{\BP(f)}$ is bimeromorphic. 
	Therefore, to show existence in the general case it is enough to prove it in a neighborhood of a point $q\in U_{\BP(f)}$, because by uniqueness local lifts will match to the desired $\tilde f$.
	
	Now, by assumption $\BP_{q}(f)$ is empty for all $q\in U, q\ne p$, so $\BP_{q}(f\circ \pi_{\BP(f)})$ is empty for all $q\in U, f(q)\ne p$, and by Remark \ref{BP-after-blowup}, $\BP_{q}(f\circ \pi_{\BP(f)})$ is empty for all $q\in U, f(q)= p$.
	Therefore the previous paragraph shows that there is a lift $\tilde f$ as claimed.
	
	Moreover, if $p\in\BP(f)$ then as observed above there is no lift of $f$ to any neighborhood $U'$ of $p$. 
	Therefore, if $\pi:U_\pi\rightarrow U$ is a bimeromorphic model of $U$ which admits a lift $\bar f:U_\pi\rightarrow S'_{p'}$, then $\pi$ is not biholomorphic onto any neighborhood of $p$, and so it factors through $\pi_p$. The second claim now follows by induction on $|\BP(f)|$.
	
	Let now $(\bar\nu_q)_{q\in \BP(f)}$ be the weights determined by \eqref{eq:base_cluster_multiplicities}. 
	It remains to show that $\nu_q=\bar{\nu}_q$ for each $q$, which we do by induction on $|\BP(f)|$.
	To this end, let $q$ be a point in the first neighborhood of $p$, not in $\BP(f)$ nor on $F$; this means that not all (strict transforms of) $D_\alpha$ go through $q$, and without loss of generality we assume that the strict transform of $f_2/d$ does not go through $q$. Then a direct computation in coordinates shows that the pullback $\pi_p^*(f_2)$ of $E_{p'}:v=0$ vanishes to order exactly $\nu_p$ along $E_p$ at $q$, i.e.,  $\nu_p=\bar{\nu}_p$.
	This in particular gives the desired equality if $\BP(f)$ consists only of the point $p$. 
	Finally, let $q_1,\dots,q_r$ be the points of $\BP(f)$ in the first neighborhood of $p$; the induction hypothesis applied to $\BP_{q_i}(f\circ \pi_p)$ finishes the proof. \end{proof}

According to \cite{Cas07}, the \emph{local degree} of $f$ at $p$, denoted by $\deg_p(f)$, is the number of points in $f^{-1}(q)$ that approach $p$ when $q$ approaches $p'$ along $\alpha_1 u + \alpha_2 v=0$ for a general $\alpha$. If the contracted curve $F$ is empty, then this is simply the number of points in $f^{-1}(q)$ that approach $p$ when $q$ approaches $p'$. In general, it satisfies
\begin{equation}\label{eq:local_degree}
\deg_p(f)=\sum_{q\in \BP_p(f)} \left(\nu_q^2 +\nu_q\cdot\mult_q(F)\right),
\end{equation}
where $\nu_q$ are the weights of the cluster of base points $\BP_p(f)$. 
Note that $\sum \nu_q^2=\BP_p(f)^2$ is the intersection multiplicity at $p$ of any two distinct curves $D_\alpha, D_{\alpha'}$ in the pencil of variable parts \cite[Ex. 7.2]{Cas00}, whereas $\sum \nu_q \cdot \mult_q(F)$ is the intersection multiplicity at $p$ of a general $D_\alpha$ with the fixed part $F$.
\begin{remark}\label{rmk:degree_composition}
By definition, the local degree is multiplicative under composition of finite maps. More precisely, if $f:S\rightarrow S'$ and $g:S'\rightarrow S''$ are holomorphic maps and $f$ has empty contracted curve, denoting $p'=f(p)$, one has $\deg_p(g\circ f)=\deg_{p'}(g)\cdot\deg_p(f)$.
\end{remark}

Let $\mathcal{K}=(K,\mu)$ be an arbitrary weighted cluster of points infinitely near to the target point $p'$ of $f$. 
Generalizing the cluster of base points of $f$ which describes the singularities of pullbacks of general curves smooth at $p'$, we next associate to $\mathcal{K}$ a \emph{pullback cluster} $f^*(\mathcal{K})=(f^*(K),f^*\mu)$ of points infinitely near to $p$ in order to describe the singularities of pullbacks of general curves going through $\mathcal{K}$.
Following the characterization of $\BP(f)$ given as Lemma \ref{lem:BP-as-resolution}, let $\pi_{K}:S'_{K}\rightarrow S'$ be the composition of the blowups centered at all points of $K$, and let $D_{\mathcal{K}}=\sum_{q\in K} \mu_q E_q$ be the associated divisor on $S'_{K}$. 
Then we define $f^*(K)$ as the cluster of all points which need to be blown up to resolve the indeterminacy at $p$ of the composition $\pi_{K'}^{-1}\circ f$, and its multiplicities $f^*\mu$ as determined by 
\begin{equation*}
\tilde{f}^*(D_{\mathcal{K}})=F_{\mathcal{K}}+ \sum_{q\in f^*(K)} (f^*\mu)_q E_q.
\end{equation*}
where $F_{\mathcal{K}}$ is formed by all components of $\tilde{f}^*(D_{\mathcal{K}})$ which do not contract to $p$ in $S$.
$F_{\mathcal{K}}$ can be called the fixed part of $f^*(\mathcal{H}_{\mathcal{K}})$; all of its components map in $S$ to components of the contracted curve $F$ of $f$, with multiplicities depending on $\mathcal{K}$.
If the weighted cluster $\mathcal{K}$ is consistent, the pullback cluster $f^*(\mathcal{K})$ can be described as a suitable cluster of base points; the following lemma gives the precise statement. 

\begin{lemma}\label{lem:pullback-as-BP}
Let $w,z\in \cO_{S',p'}$ be local equations of two curves going through $\mathcal{K}$ with multiplicities exactly $\mu$ and sharing no further point (see \cite[Section 4.2]{Cas00}).
Let $V\subset S'$ be an open neighborhood of $p'$ such that $(w, z)$ determine a dominant holomorphic map $g:V\rightarrow \bbC^2$.
Then $F_\mathcal{K}$ is the pullback in $S_{f^*(K)}$ of the contracted germ of the holomorphic map $g\circ f$, and $f^*(\mathcal{K})$ differs from the cluster of base points at $p$ of $g\circ f$ at most in some points of multiplicity 0.
\end{lemma}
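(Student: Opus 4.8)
The plan is to reduce the statement about the pullback cluster $f^*(\mathcal{K})$ to the already-established characterization of clusters of base points, using the auxiliary map $g=(w,z)$ to convert ``going through $\mathcal{K}$'' into a genuine morphism to $\bbC^2$. The key observation is that, by construction, the blowup $\pi_K:S'_K\rightarrow S'$ composed with the two coordinate projections realizes the complete ideal $\mathcal{H}_{\mathcal{K}}$: since $w,z$ go through $\mathcal{K}$ with multiplicities exactly $\mu$ and share no further base point, their proper transforms in $S'_K$ meet $D_{\mathcal{K}}$ transversally and the pencil they span has $D_{\mathcal{K}}$ (equivalently the cluster $\mathcal{K}$) as its weighted cluster of base points. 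In other words, $g=(w,z):V\rightarrow\bbC^2$ is precisely a map whose cluster of base points at $p'$ is $\mathcal{K}$, and resolving the indeterminacy of $\pi_K^{-1}\circ f$ is the same as resolving the indeterminacy of the blowup of $\bbC^2$ at the origin pulled back along $g\circ f$.

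First I would make this precise by invoking Lemma \ref{lem:BP-as-resolution} applied to the composite $g\circ f:U\rightarrow\bbC^2$: the cluster $\BP_p(g\circ f)$ is exactly the cluster of points that must be blown up to lift $g\circ f$ to the blowup of $\bbC^2$ at the origin, and its multiplicities are read off from the total transform of the exceptional divisor via formula \eqref{eq:base_cluster_multiplicities}. Next I would compare the two resolution problems. Resolving $\pi_{K}^{-1}\circ f$ at $p$ means finding a lift of $f$ to $S'_K$; because $g$ factors through $S'_K$ up to the blowup of the origin of $\bbC^2$ (the map $g$ being resolved exactly by blowing up $\mathcal{K}$), a lift of $f$ to $S'_K$ exists precisely when a lift of $g\circ f$ to $Bl_0(\bbC^2)$ exists. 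By the universal/minimality clause of Lemma \ref{lem:BP-as-resolution}, the minimal bimeromorphic model achieving either lift must factor through both, so the two clusters $f^*(K)$ and $\BP_p(g\circ f)$ agree as sets of points that need to be blown up, up to points where the relevant strict transforms already separate, i.e.\ points carrying multiplicity $0$.

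Then I would verify the multiplicity and fixed-part claims by the defining total-transform formulas. On one side, $\tilde f^*(D_{\mathcal K})=F_{\mathcal K}+\sum_{q\in f^*(K)}(f^*\mu)_q E_q$ defines $f^*\mu$ and $F_{\mathcal K}$; on the other, the contracted germ of $g\circ f$ together with its base-point multiplicities is governed by $\tilde{(g\circ f)}^*(E_0)$. Since $D_{\mathcal K}$ is, up to components not passing through the relevant points, the pullback to $S'_K$ of the exceptional divisor $E_0$ of $Bl_0(\bbC^2)$ under the lift of $g$, the two total transforms coincide away from the fixed parts. This identifies $F_{\mathcal K}$ with the pullback in $S_{f^*(K)}$ of the contracted germ of $g\circ f$, and shows the weights $f^*\mu$ equal the base-point multiplicities of $g\circ f$ wherever the latter are positive.

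The main obstacle I expect is bookkeeping at the boundary between the two clusters: the cluster of base points $\BP_p(g\circ f)$ is by definition consistent and contains only points of positive multiplicity, whereas $f^*(K)$ is defined directly from the total transform and may formally include points of multiplicity $0$ (points that must be blown up to realize the full cluster $K$ upstairs but where the pulled-back divisor $D_{\mathcal K}$ happens to acquire no extra multiplicity). Handling this cleanly requires care in translating ``points that need to be blown up to resolve indeterminacy'' into ``points of the consistent base-point cluster,'' and is precisely why the statement only claims agreement \emph{up to points of multiplicity $0$}; the consistency of $\mathcal{K}$ is what guarantees that no genuine discrepancy arises among the positively-weighted points.
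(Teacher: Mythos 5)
Your overall route is necessarily the paper's route---identify $\mathcal{K}$ with the cluster of base points of $g$, lift $g$ to $Bl_K(V)\rightarrow Bl_0(\bbC^2)$, deduce that $g\circ f$ lifts from $U_{f^*(K)}$, then compare clusters via Lemma \ref{lem:BP-as-resolution}---and the paper's (sketch) proof consists exactly of that lifting step, which you state correctly. However, your second paragraph rests on a claim that is false: ``a lift of $f$ to $S'_K$ exists precisely when a lift of $g\circ f$ to $Bl_0(\bbC^2)$ exists,'' and consequently ``the minimal bimeromorphic model achieving either lift must factor through both.'' Only the forward implication holds (compose a lift of $f$ with $\tilde g$). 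For the converse, take $\mathcal{K}=\{p',q'\}$ with $q'$ in the first neighborhood of $p'$ and both weights equal to $1$ (a consistent cluster, with excess zero at $p'$), $w=y$, $z=y+x^2$, and $f(s,t)=(s,t^2)$. Then $g\circ f=(t^2,\,s^2+t^2)$ generates the ideal $(s^2,t^2)$, which becomes principal after blowing up $p$ alone, so $\BP_p(g\circ f)=\{p\}$ with multiplicity $2$; but $f$ does not lift to $Bl_{p'}(S')$ on $Bl_p(S)$, because the pulled-back ideal $(s,t^2)$ is still non-principal at the point $r_1\in E_p$ in the direction $s=0$; in fact $f^*(K)=\{p,r_1\}$ with $(f^*\mu)_p=2$ and $(f^*\mu)_{r_1}=0$. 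So neither minimal model dominates the other, and your argument, taken literally, would prove $f^*(K)=\BP_p(g\circ f)$ on the nose---which is false, and is exactly what the ``up to points of multiplicity $0$'' caveat in the statement exists to avoid. Your last paragraph acknowledges this boundary issue but does not supply the missing argument, and the mechanism you propose for it (two-sided minimality) is the part that breaks.

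The repair keeps your ingredients but rearranges the logic. From the true implication plus the minimality clause of Lemma \ref{lem:BP-as-resolution} you get only one inclusion, $\BP_p(g\circ f)\subseteq f^*(K)$. Everything else---the reverse containment up to weight-zero points, the equality of weights, and the identification of $F_{\mathcal{K}}$---must come from the divisor identity of your third paragraph, which moreover holds exactly, not just ``up to components'': since $w,z$ share no component and no point outside $K$, the contracted curve of $g$ is empty and $\BP_{p'}(g)=\mathcal{K}$, so formula \eqref{eq:base_cluster_multiplicities} applied to $g$ gives $\tilde g^*(E_0)=D_{\mathcal{K}}$, where $E_0$ is the exceptional divisor of $Bl_0(\bbC^2)$. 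Hence $\tilde f^*(D_{\mathcal{K}})=\widetilde{(g\circ f)}^{\,*}(E_0)$ on $U_{f^*(K)}$; comparing this with formula \eqref{eq:base_cluster_multiplicities} applied to $g\circ f$ (pulled back from $U_{\BP_p(g\circ f)}$ to $U_{f^*(K)}$), and using that the total transforms $E_q$ are linearly independent among exceptional divisors, one reads off that $(f^*\mu)_q$ equals the base-point multiplicity $\nu_q$ for $q\in\BP_p(g\circ f)$, that $(f^*\mu)_q=0$ for the remaining points of $f^*(K)$, and that $F_{\mathcal{K}}$ is the pullback of the contracted germ of $g\circ f$. Finally, one correction of attribution: it is not the consistency of $\mathcal{K}$ that ``guarantees that no genuine discrepancy arises''; consistent clusters can perfectly well contain excess-zero points (the example above is consistent), and those points are precisely the source of the weight-zero discrepancy---the discrepancy is real, merely harmless.
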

\begin{proof}[Sketch of proof.]
The key point of the proof is to show that there is a lift $U_{f^*(K)}\rightarrow Bl_0(\mathbb{C}^2)$ of $g \circ f$, which follows from the straightforward fact that $g$ lifts to $Bl_K(V)\rightarrow Bl_0(\mathbb{C}^2)$  (in fact by the definitions $K$ is the cluster of base points of $g$).
\end{proof}

Put $d=\gcd(f^*(w),f^*(z))$. It follows from the lemma that the curve $F_{\mathcal{K}}$ is given by $d=0$, and the multiplicities of all but finitely many curves in the pencil $\{\alpha_1 f^*(w)/d+\alpha_2f^*(z)/d\}$ at the points of $f^*(K)$ are exactly the weights $f^*\mu$. 
More precisely, the cluster of base points of this pencil consists of the subcluster of $f^*\mathcal{K}$ of the points with positive multiplicity. If $\mathcal{K}$ is consistent, then $f^*\mathcal{K}$ is consistent as well. However, when $\mathcal{K}$ has points $q$ whose excess $D_{\mathcal{K}}\cdot \tilde{E}_q$ is zero  \cite[Section 4.2]{Cas00}, $f^*\mathcal{K}$ may have points with multiplicity zero.

\begin{corollary}\label{cor:multiplicative}
	Let $f:S\rightarrow S'$ be a dominant holomorphic map between smooth complex surfaces, $p\in S$ a point with $f(p)=p'$, and let $\mathcal{K}=(K,\mu)$ be a consistent weighted cluster of points infinitely near to $p'$. 
	If the curve contracted to $p'$ is empty, then 
	\[ (f^*\mathcal{K})^2 = \deg_p (f) \cdot {\mathcal{K}}^2. \]
\end{corollary}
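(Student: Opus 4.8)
The plan is to compute $(f^*\mathcal{K})^2$ as the local degree of an auxiliary map and then invoke multiplicativity of the local degree. Following Lemma~\ref{lem:pullback-as-BP}, I would first fix local equations $w,z\in\cO_{S',p'}$ of two general curves going through $\mathcal{K}$ with multiplicities exactly $\mu$ and sharing no further point, and set $g=(w,z)\colon V\to\bbC^2$. Since $\mathcal{K}$ is consistent, the cluster of base points of $g$ at $p'$ is exactly $\mathcal{K}$ (as noted in the proof of Lemma~\ref{lem:pullback-as-BP}, $K$ is the base-point cluster of $g$, and for general $w,z$ the weights are $\mu$); moreover, because $w,z$ have no common factor, the curve contracted by $g$ is empty. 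Hence \eqref{eq:local_degree} gives $\deg_{p'}(g)=\BP_{p'}(g)^2=\sum_{q}\mu_q^2=\mathcal{K}^2$.

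Next I would relate $(f^*\mathcal{K})^2$ to the local degree of $g\circ f$. By Lemma~\ref{lem:pullback-as-BP} the pullback cluster $f^*\mathcal{K}$ and the cluster of base points $\BP_p(g\circ f)$ coincide away from points of multiplicity $0$, which contribute nothing to the self-intersection, so $(f^*\mathcal{K})^2=\BP_p(g\circ f)^2$. To identify the right-hand side with $\deg_p(g\circ f)$ via \eqref{eq:local_degree}, the cross terms $\nu_q\cdot\mult_q(F_{g\circ f})$ must vanish, i.e.\ the germ contracted by $g\circ f$ must be empty. This is where the hypothesis enters: the contracted germ of $g\circ f$ is $F_{\mathcal{K}}$, and all of its components map in $S$ to components of the curve $F$ contracted to $p'$ by $f$; since $F$ is empty by assumption, $F_{\mathcal{K}}$ is empty as well. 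Therefore \eqref{eq:local_degree} yields $\deg_p(g\circ f)=\BP_p(g\circ f)^2=(f^*\mathcal{K})^2$.

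Finally, since $f$ has empty contracted curve, multiplicativity of the local degree under composition (Remark~\ref{rmk:degree_composition}) gives $\deg_p(g\circ f)=\deg_{p'}(g)\cdot\deg_p(f)$. Combining the three computations,
\[(f^*\mathcal{K})^2=\deg_p(g\circ f)=\deg_{p'}(g)\cdot\deg_p(f)=\mathcal{K}^2\cdot\deg_p(f),\]
as claimed. I expect the main obstacle to be the careful bookkeeping around contracted curves: one must verify both that $g$ itself contracts nothing (so that $\deg_{p'}(g)=\mathcal{K}^2$, rather than a larger value coming from a fixed part) and that $g\circ f$ contracts nothing at $p$ (so that its local degree is purely the self-intersection $\BP_p(g\circ f)^2$, with no contribution from a fixed part $F_{\mathcal{K}}$). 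It is precisely the vanishing of $F$ that makes both conditions hold, and identifying the contracted germ of $g\circ f$ with $F_{\mathcal{K}}$ is the one place where the structural description of the pullback cluster must be used rather than a formal manipulation.
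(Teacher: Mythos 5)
Your proof is correct and takes essentially the same route as the paper's: the paper likewise introduces the auxiliary map $g=(w,z)$ from two general curves through $\mathcal{K}$, computes $\deg_{p'}(g)=\mathcal{K}^2$ via \eqref{eq:local_degree}, identifies $\deg_p(g\circ f)=(f^*\mathcal{K})^2$ using Lemma \ref{lem:pullback-as-BP}, and concludes by multiplicativity of the local degree (Remark \ref{rmk:degree_composition}). The only difference is that you make explicit the bookkeeping the paper compresses into ``by the lemma'' --- namely that $F_{\mathcal{K}}$ is empty because $F$ is, and that multiplicity-zero points do not affect the self-intersection --- which is a faithful unpacking rather than a new argument.
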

\begin{proof}
	Let $w,z\in \cO_{S',p'}$ be the local equations of two distinct curves going through $K$ with multiplicities exactly $\mu$, and sharing no further point. 
	Consider the dominant holomorphic map $g:V\rightarrow \bbC^2$ determined by $(w, z)$ as above.
	Since it has empty contracted curve, by \eqref{eq:local_degree}, $\deg_{p'}(g)=\mathcal{K}^2$, and by the lemma, $\deg_p(g\circ f)=(f^*\mathcal{K})^2$. Since by Remark \ref{rmk:degree_composition}, $\deg_p(g\circ f)=\deg_p (f) \cdot\deg_{p'}(g)$,
	the claim follows.
\end{proof}
\begin{proposition}\label{pro:submultiplicative}
	Let $f: S\rightarrow S'$ be a dominant holomorphic map between smooth complex surfaces, and $p\in S$ such that $f^{-1}(p')=\{p\}$. 
	For every cluster $K$ of points infinitely near to $p'$,
    $| f^*(K) |\le \deg_p (f)\cdot|K|$.
	Moreover, if the multiplicity of $f$ is $\nu(f)>1$, then the inequality is strict.
\end{proposition}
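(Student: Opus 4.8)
I shall prove, by induction on $|K|$, the following slightly more general local statement for every dominant holomorphic map $g\colon S\to S'$ and every point $r\in S$ with $r'=g(r)$: for each cluster $K$ based at $r'$ one has $|g^*(K)|\le\deg_r(g)\cdot|K|$, with strict inequality whenever $\nu_r(g)>1$. The proposition is the case $g=f$, $r=p$; here the hypothesis $f^{-1}(p')=\{p\}$ guarantees that no curve is contracted to $p'$, so the contracted germ $F$ is empty and $\nu(f)=\nu_p$.

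For the base case $|K|=1$ we have $K=\{r'\}$, and by Lemma \ref{lem:BP-as-resolution} the pullback cluster $g^*(K)$ is exactly the cluster of base points $\BP_r(g)$. Every base point carries multiplicity $\nu_s\ge 1$, so formula \eqref{eq:local_degree} gives
\[|\BP_r(g)|=\sum_{s\in\BP_r(g)}1\le\sum_{s\in\BP_r(g)}\bigl(\nu_s^2+\nu_s\cdot\mult_s(F)\bigr)=\deg_r(g),\]
and the inequality is strict as soon as one summand exceeds $1$. When $\nu_r(g)>1$ this is always the case (if $F$ is empty then $\nu_s\ge2$ for $s=r$, and otherwise the term $\nu_r\cdot\mult_r(F)$ is positive), which settles the base case together with its strict form.

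For the inductive step I peel off the blowup of $r'$. Let $q_1,\dots,q_s$ be the points of $K$ in the first neighborhood of $r'$ and $K_i=\{q\in K:q\ge q_i\}$, so that $|K|=1+\sum_i|K_i|$; write $\pi_K=\pi_{r'}\circ\pi_{K'}$ with $K'=\bigcup_iK_i$ regarded on $\mathrm{Bl}_{r'}(S')$. Resolving $\pi_K^{-1}\circ g$ at $r$ in two stages --- first the lift $\tilde g$ of Lemma \ref{lem:BP-as-resolution}, then $\pi_{K'}^{-1}\circ\tilde g$ --- the minimality clause of that lemma shows that the second stage only requires blowing up over the finitely many preimages $r_{i,j}\in\tilde g^{-1}(q_i)$ that lie over $r$, where it produces precisely the local pullback clusters $\tilde g^*_{r_{i,j}}(K_i)$. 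These clusters are pairwise disjoint and disjoint from $\BP_r(g)$, so
\[|g^*(K)|=|\BP_r(g)|+\sum_{i=1}^{s}\sum_{j}\bigl|\tilde g^*_{r_{i,j}}(K_i)\bigr|.\]
Since $|K_i|<|K|$, the induction hypothesis bounds each inner term by $\deg_{r_{i,j}}(\tilde g)\cdot|K_i|$; combining this with the base-case bound $|\BP_r(g)|\le\deg_r(g)$ and with $|K|=1+\sum_i|K_i|$, the desired inequality $|g^*(K)|\le\deg_r(g)\cdot|K|$ will follow at once from the single remaining estimate
\[\sum_{j}\deg_{r_{i,j}}(\tilde g)\le\deg_r(g)\qquad(i=1,\dots,s).\]
The strict inequality when $\nu_r(g)>1$ is then automatic from the strict base-case bound $|\BP_r(g)|<\deg_r(g)$, the recursive terms needing only the non-strict estimate.

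The displayed estimate is the main obstacle. Geometrically it is a conservation statement for the local degree: $\deg_r(g)$ is the number of preimages near $r$ of a general point $q^\ast$ near $r'$, and this number is finite; as $\tilde q\to q_i$ along a general line in $\mathrm{Bl}_{r'}(S')$ its image $q^\ast=\pi_{r'}(\tilde q)$ tends to $r'$, and --- through the biholomorphism induced by $\pi_{\BP(g)}$ away from the exceptional locus --- the $\tilde g$-preimages of $\tilde q$ accumulating at the various $r_{i,j}$ are among the $g$-preimages of $q^\ast$ approaching $r$, whence the sum of the $\deg_{r_{i,j}}(\tilde g)$ is bounded by $\deg_r(g)$. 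The delicate point is that $\tilde g$ may contract a strict transform of an exceptional curve of $\pi_{\BP(g)}$ to a point of the exceptional divisor of $\pi_{r'}$, possibly to some $q_i$; then the fibre $\tilde g^{-1}(q_i)$ is not finite and part of the preimage count is absorbed into a contracted germ. This is exactly why the estimate is an inequality rather than an equality, and why the inductive statement must be allowed to carry a nonempty contracted curve (for which the base case above was already verified using the full formula \eqref{eq:local_degree}). Making this accumulation count rigorous in the presence of contracted exceptional curves is the technical heart of the argument.
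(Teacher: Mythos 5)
Your inductive scheme is genuinely different from the paper's, and the difference is fatal. You peel off the root $r'$ and recurse on the branch clusters $K_i$ through the lifted map $\tilde g$ at its preimage points $r_{i,j}$, whereas the paper removes a \emph{maximal} point $q_0'$ of $K$ and keeps the same map $f$ and the same point $p$ throughout the induction. Because the paper's induction never changes the map, the hypothesis $f^{-1}(p')=\{p\}$ --- hence local properness of $f$ near $p$ and the degree count $\deg_p(f)=|f^{-1}(q')|$ of \eqref{eq:degree_mumford} --- stays available at every step, and that is precisely what the paper uses to prove its analogue of your displayed estimate, namely $\sum_{q\in\mathcal{Q}}\deg_q(\tilde f)\le \deg_p(f)$, via a choice of coordinates $u,v$ at $q_0'$ with $uv=0$ along the exceptional divisor. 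Your recursion instead applies the statement to $\tilde g$ at points $r_{i,j}$ lying on exceptional curves, where isolated preimages cannot be guaranteed ($\tilde g$ typically contracts strict transforms of exceptional components onto points of $E_{r'}$); this is why you were forced to generalize to arbitrary dominant $g$ with contracted germ $F$. You then leave the key estimate $\sum_j\deg_{r_{i,j}}(\tilde g)\le\deg_r(g)$ unproven and explicitly flag it as ``the technical heart,'' so the argument is incomplete as written.

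Worse, the gap cannot be filled in the generality you set up, because your generalized induction hypothesis is false once contracted curves are allowed. Take $g(x,y)=(x^2,x^2y)$ near $r=(0,0)$, with $r'=g(r)=(0,0)$. The contracted germ is $F: x^2=0$, the variable pencil $\alpha_1+\alpha_2y=0$ has no base points, so $\BP_r(g)=\emptyset$ and $\deg_r(g)=0$ by \eqref{eq:local_degree} (or directly: the preimages of $(t,\lambda t)$ are $(\pm\sqrt{t},\lambda)$, which do not approach $r$), while $\nu_r(g)=2>1$. Already your strict claim fails for $K=\{r'\}$, since it asserts $0<0$; note your base-case argument tacitly assumes $r\in\BP_r(g)$, which is exactly what fails here. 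The non-strict claim fails too: take $K=\{r',q\}$ where $q\in E_{r'}$ is the point in the direction $v=0$. The lift of $g$ to $S'_{r'}$ is $(x^2,y)$ in the chart $(u,v/u)$; it is regular at $r$, sends $r$ to $q$, and resolving $\pi_q^{-1}$ composed with this lift at $r$ requires blowing up the two base points of the pencil $\alpha_1x^2+\alpha_2y$, so $|g^*(K)|=2>0=\deg_r(g)\cdot|K|$. Thus the hypothesis $f^{-1}(p')=\{p\}$ is not a convenience but essential, and any induction must be organized, as in the paper, so that this hypothesis is preserved along the way; your root-peeling scheme destroys it at the very first recursive call.
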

\begin{proof}
	Note that $p$ is an isolated preimage of $p'$, so in particular the curve contracted to $p'$ is empty.
	
	We argue by induction on $|K|$. If $|K|=1$, then $K=\{p'\}$ and $f^*(K)=\BP(f)$. 
	Since $\deg_p(f)=\sum_{q\in \BP(f)} \nu_q^2$ and $\nu_q\ge 1$ for all $q \in \BP(f)$ by definition, the claims follow. 
	
	Now assume $|K|>1$, and let $q_0'$ be a maximal point by the partial ordering by infinitely-near-ness, so that $K_0 = K\setminus\{q_0'\}$ is a cluster. 
	By induction we may assume that 
	\begin{equation}\label{eq:induction}
	| f^*(K_0)|\le \deg_p (f)\cdot|K_0|=\deg_p (f)\cdot
	\left(|K|-1\right).
	\end{equation}
	
	Since 
	$p$ is an isolated preimage of $p'$, there exist open neighborhoods $U\subset S$ of $p$ and $V\subset S'$ of $p'$ such that $f|_{U}: U\rightarrow V$ is a surjective proper holomorphic map. 
		Then 
		\begin{equation}\label{eq:degree_mumford}
		\deg_p(f)=|f^{-1}(q')|, \forall q' \in V \setminus \{p'\}
		\end{equation}
		(see \cite[\S 3.A]{Mum76}). 
		Consider the blowups $\pi_{f^*(K_0)}$ and $\pi_{K_0}$ of all points in the clusters $f^*(K_0)$, $K_0$, and the corresponding lift of $f$,
\[\begin{tikzcd}
\tilde{U} \rar{\tilde{f}} \dar[swap]{\pi_{f^*(K_0)}}  & \tilde V \dar{\pi_{K_0}} \\
U \rar[swap]{f} & V  \\
\end{tikzcd}
\]
%
		The point $q_0'$ belongs to $\tilde V$. More precisely, to the preimage of $p'$, which is an effective divisor $D$ in $\tilde V$, and
		$\tilde{f}^{-1}(q_0')$ is contained in the divisor $\tilde{f}^*(D)$, which is the preimage of $p$ in $\tilde U$.
		
		Choose local coordinates $u,v$ in an open neighborhood 
		$V'\subset \tilde V$ of $q_0'$, such that $uv=0$ along $D\cap V'$
		(this is possible because at most two prime components of $D$
		meet at $q_0'$). Then for a general member of the pencil 
		$\{L_\alpha: \alpha_1 u+\alpha_2 v=0\}$, every point on $L_\alpha$ except $q_0'$
		has exactly $\deg_p (f)$ preimages by $\tilde{f}$, 
		\eqref{eq:degree_mumford}. 
		The set $\tilde{f}^{-1}(q_0')$ need not be finite, 
		but it is easy to see that 
		\[\mathcal{Q}=\{q\in \tilde U \,|\, \tilde{f}(q)=q_0', 
		\deg_q(\tilde{f})>0\}\]
		is finite, and in fact
		\[ \sum_{q\in \mathcal{Q}} \deg_q(\tilde{f})\le \deg_p(f). \]
        Notice that $\mathcal{Q}$ is also the set of indeterminacy points of $\pi_{q_0'}^{-1}\circ \tilde f$, where $\pi_{q_0'}$ is the blowup
		centered at $q_0'$.
		Consider, for each $q \in \mathcal{Q}$, the cluster $\BP_q(\tilde{f})$. 
		It is clear that blowing up $\tilde U$ at all points of
		$\bigcup_{q\in \mathcal{Q}} \BP_q(\tilde{f})$ resolves the indeterminacies
		of $\pi_{q_0'}^{-1}\circ \tilde f$. 
		Therefore, blowing up all points in the cluster
		\[K_1=f^*(K)_0 \cup \bigcup_{q \in \mathcal{Q}} \BP_q(\tilde{f})\] 
		resolves the indeterminacies
		of $\pi_{K}^{-1}\circ  f=\pi_{q_0'}^{-1}\circ\pi_{K_0}^{-1} \circ f$. 
		By definition, it follows that $f^*(K) \subset K_1$, and therefore
		\[
		|f^*(K)| \le|K_1| \le |f^*(K)_0|+ \sum_{q \in \mathcal{Q}} \deg_q \tilde f \le \deg_p (f)\cdot
		\left(|K|-1\right) +\deg_p (f)
		\]
		so we are done. 
		Note that if $\nu(f)>1$, then (by the induction hypothesis) the inequality
		$|f^*(K)_0|<\deg_p (f)\cdot\left(|K|-1\right)$ is strict and hence also
		$|f^*(K)|<\deg_p (f)\cdot |K|$.		
\end{proof}

\subsection{Pullback of multi-clusters and $H$-indices}

We are now going to apply the local results from the previous section in the global setting of finite morphisms $f:\bbP^2\rightarrow\bbP^2$ in order to study the behavior of $H$-constants under pullbacks.

\begin{theorem}[Theorem {\ref{intro:pullback-H}}]\label{thm:pullback-H}
Let $f:S \rightarrow S'$ be a finite morphism of complex projective surfaces, $C\subset S'$ a reduced curve, and $K$ a multi-cluster on $S'$. Assume that $f^*(C)$ is reduced and $H(C,K)\le 0$. Then
\[H(f^*(C),f^*(K))\le H(C,K),\]
with a strict inequality if there is a proper point $p \in f^{*}(\mathcal{K})$ with $\nu_p (f)>1$.
\end{theorem}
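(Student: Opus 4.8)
The plan is to deduce the statement from three multiplicativity properties of $f$, combined through Lemma~\ref{lem:H-passing} and an elementary monotonicity argument. Write $d=\deg(f)$ for the generic number of preimages, and let $\mathcal{K}=(K,\mu)$ be the weighted multi-cluster with $\mu_q=\mult_q(C)$ at each $q\in K$. Because the multiplicities of a reduced curve satisfy the proximity inequalities at every infinitely near point, their restriction to $K$ is again consistent: on $S'_K$ the strict transform $\tilde C=\pi_K^*(C)-D_{\mathcal{K}}$ shares no component with any $\tilde E_q$, so $D_{\mathcal{K}}\cdot\tilde E_q=-\tilde C\cdot\tilde E_q\le 0$. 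Thus $\mathcal{K}$ is consistent and $C$ goes through it with multiplicities exactly $\mu$; by the defining property of the pullback cluster, $f^*(C)$ then goes through the (again consistent) weighted multi-cluster $f^*(\mathcal{K})=(f^*(K),f^*\mu)$. The hypothesis that $f^*(C)$ is reduced is used here to guarantee that the reduced curve $f^*(C)$ represents the divisor class $f^*[C]$.

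First I would collect three global identities. (i) Since $f$ is finite it contracts no curve, so the projection formula yields $(f^*C)^2=d\cdot C^2$. (ii) Fix a proper point $p'$ of $K$ with cluster $K_{p'}$; finiteness makes $f^{-1}(p')$ finite, and restricting $f$ to a small neighbourhood of each $p\in f^{-1}(p')$ places us in the single-preimage, empty-contracted-curve setting of Corollary~\ref{cor:multiplicative}. Applying it at each such $p$ and using the additivity $\sum_{p\in f^{-1}(p')}\deg_p(f)=d$ (a consequence of \eqref{eq:degree_mumford}) gives $\sum_{p}(f^*_p\mathcal{K}_{p'})^2=d\cdot\mathcal{K}_{p'}^2$; summing over all proper points of $K$ yields $(f^*\mathcal{K})^2=d\cdot\mathcal{K}^2$. (iii) In the same local-to-global manner, Proposition~\ref{pro:submultiplicative} gives $|f^*_p(K_{p'})|\le\deg_p(f)\cdot|K_{p'}|$ at each $p$, hence $|f^*(K)|\le d\cdot|K|$, with the inequality strict as soon as some proper point $p\in f^*(K)$ has $\nu_p(f)>1$.

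Next I would assemble these. Applying Lemma~\ref{lem:H-passing} to the reduced curve $f^*(C)$ and the consistent weighted multi-cluster $f^*(\mathcal{K})$, then substituting (i) and (ii) together with the identity $C^2-\mathcal{K}^2=|K|\,H(C,K)$, gives
\[
H(f^*(C),f^*(K))\le\frac{(f^*C)^2-(f^*\mathcal{K})^2}{|f^*(K)|}=\frac{d\,|K|\,H(C,K)}{|f^*(K)|}.
\]
Now the hypothesis $H(C,K)\le 0$ makes the numerator non-positive, and dividing a non-positive quantity by the smaller positive number $|f^*(K)|\le d\,|K|$ can only decrease it, so
\[
\frac{d\,|K|\,H(C,K)}{|f^*(K)|}\le\frac{d\,|K|\,H(C,K)}{d\,|K|}=H(C,K),
\]
which is the desired inequality.

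For strictness, observe that when $H(C,K)<0$ the numerator above is strictly negative, so the strict bound $|f^*(K)|<d\,|K|$ of step (iii)---supplied by a ramification point $\nu_p(f)>1$---forces the last display to be strict, giving $H(f^*(C),f^*(K))<H(C,K)$. The main obstacle will be the bookkeeping in steps (ii) and (iii): one must legitimately pass from the purely local Corollary~\ref{cor:multiplicative} and Proposition~\ref{pro:submultiplicative} to the full, possibly multi-point fibre of the finite morphism, recovering the global factor $d$ from both the additivity $\sum_p\deg_p(f)=d$ and the projection formula. A subtler point is the boundary case $H(C,K)=0$: there the numerator vanishes, the denominator estimate alone yields only $\le$, and the monotonicity argument does not by itself produce strictness; this case falls outside the argument above and would require separate analysis, which suggests that the strict conclusion is really governed by the condition $H(C,K)<0$.
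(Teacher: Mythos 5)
Your proof is correct and follows essentially the same route as the paper's: localize at the fibres of $f$, apply Corollary \ref{cor:multiplicative} and Proposition \ref{pro:submultiplicative} together with $\sum_{p\in f^{-1}(p')}\deg_p(f)=\deg(f)$, pass to Harbourne constants via Lemma \ref{lem:H-passing}, and conclude by the monotonicity argument using $H(C,K)\le 0$. If anything, your explicit appeal to Lemma \ref{lem:H-passing} is cleaner than the paper's equation \eqref{eq:pullback-H}, which is written as an equality although only the inequality $\le$ is justified (the multiplicities of $f^*(C)$ at points of $f^*(K)$ may exceed the pullback weights) --- and only the inequality is needed.

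Your closing caveat about the boundary case $H(C,K)=0$ is well founded, and it is not a defect of your argument in particular: the paper's proof derives strictness from exactly the same mechanism, namely a strictly smaller denominator against a non-positive numerator, so it too only yields a strict inequality when $H(C,K)<0$. In fact the strictness assertion genuinely fails at the boundary: take $C$ to be three distinct lines concurrent at $p=[1:0:0]$ (none of them a coordinate line), $K=\{p\}$, so that $H(C,K)=(9-9)/1=0$, and let $f$ be the Kummer cover $[x:y:z]\mapsto[x^k:y^k:z^k]$ with $k\ge 2$. Then $f^{-1}(p)=\{p\}$ with $\nu_p(f)=k>1$, the curve $f^*(C)$ is a reduced union of $3k$ lines concurrent at $p$, the pullback cluster is $f^*(K)=\{p\}$, and $H(f^*(C),f^*(K))=\bigl((3k)^2-(3k)^2\bigr)/1=0=H(C,K)$, with no strict drop. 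So the strictness clause of the theorem really requires $H(C,K)<0$, which is precisely the case your proof establishes; you have in effect located a small but genuine imprecision in the paper's statement and proof rather than a gap in your own.
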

\begin{proof}
	Let $k=\deg(f)$. By definition,
	\begin{align*}
	H(C,K)&=\frac{C^2-\sum_{q\in K}(\mult_q(C))^2}{|K|}, \\
	H(f^*(C),f^*(K))&=\frac{k\, C^2-\sum_{q\in f^*(K)}(\mult_q(f^*(C)))^2}{|f^*(K)|},
	\end{align*}

	
	Let $K_i$, for $i=1, \dots, r$, be the clusters composing $K$, with $K_i$ a cluster of points infinitely near to $p_i\in S'$. 
	For each $p\in S$ with $f(p)=p_i$, denote $f_p$ be the restriction of $f$ to a neighborhood $U_p$ of $p$ such that $f^{-1}(p_i)\cap U_p=\{p\}$.
	Consider the cluster $(K_i,\MSing(C))$ weighted with the multiplicities of $C$ at the points of $K_i$, and let $\mathcal{K}_p$ be the weighted cluster $f^*(K,\MSing (C))$ obtained by the pullback.
	The properties of the pullback cluster from the previous section immediately give that:
	\begin{enumerate}
		\item $\mathcal{K}_p$ is a weighted cluster of points infinitely near to $p$,
		\item $f^*(C)$ goes through $\mathcal{K}_p$,		
		\item $f^*(K)=\bigcup_{f(p)\in \{p_1,\dots,p_r\}} K_p$.
	\end{enumerate}
	Therefore,
	\begin{equation}\label{eq:pullback-H}
	H(f^*(C),f^*(K))=\frac{k\, C^2-\sum_{i=1}^r \sum_{f(p)=p_i} \mathcal{K}_p^2}{\sum_{i=1}^r \sum_{f(p)=p_i}|\mathcal{K}_p|}.
	\end{equation}
	For each $i=1,\dots, r$, Corollary \ref{cor:multiplicative} gives
	\[
	\sum_{f(p)=p_i} \mathcal{K}_p^2=\sum_{f(p)=p_i} \deg_p(f)\,(K_i,\MSing(C))^2=k\,(K_i,\MSing(C))^2,
	\]
	and Proposition \ref{pro:submultiplicative} gives
	\[
	\sum_{f(p)=p_i}|\mathcal{K}_p|\le \sum_{f(p)=p_i} \deg_p(f)\,|K_i|=k\,|K_i|,
	\]
	with a strict inequality if $\nu_{p}f>1$ at some $p$ with $f(p)=p_i$. 
	Taking into account that $H(C,K)\le 0$, the equation \eqref{eq:pullback-H} now gives
	\[
	H(f^*(C),f^*(K)) \le
	\frac{k\left(C^2-\sum_{q\in K}(\mult_q(C))^2\right)}
	{k\,\sum_{i=1}^r |K_i|},
	\]
	with a strict inequality if there exist some $i$ and $p$ with $f(p)=p_i$, and $\nu_{p}f>1$. 	
\end{proof}

For plane curves and in terms of $h$-indices, we have the following corollary:

\begin{corollary}
	Let $f:\bbP^2 \rightarrow \bbP^2$ be a finite morphism, and let $C\subset \bbP^2$ be a reduced curve with $f^*(C)$ reduced.
	\begin{enumerate}
		\item if $-4\le h(C)<0$, then $h(f^*(C))\le h(C)$,		
		\item if there is a proper point $p \in f^{-1}(\MSing(C))$, then the above inequality is strict.
	\end{enumerate}
\end{corollary}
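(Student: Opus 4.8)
The plan is to apply Theorem \ref{thm:pullback-H} with the multi-cluster $K=\MSing(C)$ and then convert the resulting inequality between Harbourne constants \emph{at pullback clusters} into the desired inequality between Harbourne \emph{indices} by means of the computation in Remark \ref{rmk:h-4}. First I would check the hypotheses of the theorem: $f$ is a finite morphism of projective surfaces, $C$ and $f^*(C)$ are reduced by assumption, and $H(C,\MSing(C))=h(C)<0\le 0$. The theorem then yields
\[ H\bigl(f^*(C),f^*(\MSing(C))\bigr)\le h(C), \]
with strict inequality as soon as some proper point of $f^*(\MSing(C))$---that is, some proper preimage $p$ of a proper point of $\MSing(C)$, these being exactly the proper points of the pullback cluster---has $\nu_p(f)>1$. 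This is precisely the situation recorded in part (2).

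Next I would relate the pullback cluster $f^*(\MSing(C))$ to the genuine cluster of multiple points $\MSing(f^*(C))$, which is what computes $h(f^*(C))$. Since every point of $\MSing(C)$ carries multiplicity $\ge 2$ and the pullback scales these weights up (by the defining formula, the coefficient of each $E_q$ in $\tilde f^*(D_{\MSing(C)})$ is a positive-integer combination of the $\mult_{q'}(C)\ge 2$), every point of $f^*(\MSing(C))$ of positive multiplicity is a point of multiplicity $\ge 2$ of $f^*(C)$. Discarding the finitely many multiplicity-zero points that the construction may produce over points of zero excess, this exhibits $f^*(\MSing(C))$ as a subcluster of $\MSing(f^*(C))$, all of whose \emph{extra} points again have multiplicity $\ge 2$.

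Finally I would run the computation of Remark \ref{rmk:h-4} for the curve $f^*(C)$. Writing $N'=f^*(C)^2-\sum_{q\in f^*(\MSing(C))}\mult_q(f^*(C))^2$ and $k'=|f^*(\MSing(C))|$, and letting the $t\ge 0$ extra points of $\MSing(f^*(C))$ contribute $\Sigma\ge 4t$ to the sum of squared multiplicities, Step~1 reads $N'\le h(C)\,k'$, and then
\[ h(f^*(C))=\frac{N'-\Sigma}{k'+t}\le \frac{h(C)\,k'-4t}{k'+t}\le h(C), \]
where the final step is exactly the hypothesis $h(C)\ge -4$ (and the case $t=0$ is immediate from Step~1). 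This proves part (1); for part (2), under its hypothesis Step~1 gives the strict bound $N'<h(C)\,k'$, and the same chain then yields $h(f^*(C))<h(C)$.

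The main obstacle is Step~2: pinning down the precise relation between $f^*(\MSing(C))$ and $\MSing(f^*(C))$. One must verify that pullback multiplicities over multiple points never drop below $2$, handle the multiplicity-zero points produced by the pullback construction, and account for the new multiple points that $f^*(C)$ acquires over smooth points of $C$ where $f$ is tangent to its branch locus---these are exactly the extra points contributing $\Sigma$, and the content of the argument is that they can only help. I would also stress that the bound $h(C)\ge -4$ is used essentially in Step~3 and cannot be dropped, which is why the corollary is stated for $-4\le h(C)<0$.
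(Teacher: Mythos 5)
Your proposal is structurally the paper's own proof: apply Theorem \ref{thm:pullback-H} to $K=\MSing(C)$ (with part (2) read, as the paper's proof also reads it, as requiring $\nu_p(f)>1$ at some proper preimage of a point of $\MSing(C)$), and then convert $H(f^*(C),f^*(\MSing(C)))\le h(C)$ into $h(f^*(C))\le h(C)$ by the mechanism of Remark \ref{rmk:h-4}; your Step 3 is exactly that Remark's computation written out, with $h(C)\ge -4$ used in the same place, and strictness propagates the same way in both texts.

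The one point where you go beyond the paper is Step 2, and there your justification does not work; this is the genuine gap. The coefficients of the $E_q$ in $\tilde f^*(D_{\WSing(C)})$ are \emph{virtual} multiplicities: what the pullback construction gives is that $f^*(C)$ \emph{goes through} the weighted cluster $f^*(\WSing(C))$, which is a system of inequalities on values (orders of vanishing of the total transform along the $E_q$), not on pointwise multiplicities. So positivity of the weight at $q$ does not imply $\mult_q(f^*(C))\ge 2$: a curve can satisfy the value inequalities with multiplicity $0$ or $1$ at $q$, compensated by larger multiplicity at preceding points. Such points really occur --- e.g.\ for a Kummer cover and a curve $C$ with a tacnode at a general point of the image of a ramification line, with tacnodal tangent along that line, $f^*(\MSing(C))$ contains a (weight-zero) point at which $f^*(C)$ has multiplicity $0$. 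Multiplicity-$0$ points are harmless for your chain (the numerator is negative, so dropping them only lowers the quotient), but a point of $f^*(\MSing(C))$ at which $f^*(C)$ has multiplicity exactly $1$ would break it precisely in the range $-1<h(C)<0$: removing such a point raises the numerator by $1$ and lowers the denominator by $1$, which increases any quotient that is $>-1$. To be fair, the paper's own proof buries exactly the same issue inside its citation of Remark \ref{rmk:h-4}, which only controls \emph{adding} points of $\MSing(f^*(C))$ to the cluster and never addresses points of the given cluster at which the curve fails to be multiple; so your write-up is faithful to the paper's argument and usefully isolates where the missing verification lies, but as written Step 2 is asserted, not proved.
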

\begin{proof}
	By the theorem, $H(f^*(C),f^*(\MSing(C))\le H(C,\MSing(C))=h(C)$, with strict inequality if there is a proper point $p \in f^{-1}(\MSing(C))$ with $\nu_p (f)>1$. By Remark \ref{rmk:h-4}, either $h(f^*(C))<-4 \le h(C)$ or $h(f^*(C))\le H(f^*(C),f^*(\MSing(C))$, and we are done. 
	%
\end{proof}

Theorem \ref{thm:pullback-H} means that $H$-constants of negative curves can only decrease under pullbacks. 
By using suitable ramified morphisms we can now prove Theorem \ref{intro:strict}.

\begin{proof}[Proof of Theorem \ref{intro:strict}]\label{proof:strict}
	Let $S_\pi \rightarrow \bbP^2$ be the composition of $n$ point blowups, and let $K$ be the multi-cluster formed by the $n$ points blown up. 
	Let $C$ be a reduced curve on $S_\pi$. 
	We want to show that $C^2/|K|>h$.
	By Lemma \ref{lem:H-passing}, we may assume that $C$ is the strict transform of a reduced curve $C'$ on $\bbP^2$.
	By Theorem \ref{thm:pullback-H}, it will be enough to show that there exists $f:\bbP^2\rightarrow \bbP^2$ satisfying:
	\begin{enumerate}
		\item $f^*(C)$ is reduced;
		\item There exists $p\in S$ such that $\nu_p(f)>1$ and $f(p)$ is a proper point of $K$.
	\end{enumerate}
	This is obviously possible: let $f:\bbP^2\rightarrow \bbP^2$ be a Kummer cover given in suitable coordinates by $f([x:y:z])=[x^n:y^n:z^n])$ with $n\ge 2$, where the coordinates are chosen such that no coordinate line is a component of $C$ (hence $f^*(C)$ is reduced) and at least one coordinate point belongs to $K$.
\end{proof}

Furthermore, we prove that there is no minimal $h$-index in any sense:

\begin{proposition}\label{pro:infimum}
	\begin{enumerate}
		\item There is no curve  $C_0\subset \bbP^2$  such that
\[h(C_0)=\inf_{C\subset\bbP^2} h(C).\]
		\item There is no curve  $C_0\subset \bbP^2$ with ordinary singularities such that
\[h(C_0)=\inf_{\substack{C\subset\bbP^2\\ \text{ordinary singularities}}} h(C).\]
	\end{enumerate}
\end{proposition}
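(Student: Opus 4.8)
The plan is to rule out a minimizer in each class by showing that from any curve attaining the infimum one can manufacture a curve of the same type with \emph{strictly smaller} Harbourne index. Suppose $C_0\subset\bbP^2$ attains the relevant infimum. Then $h(C_0)$ is a finite negative number --- negativity of the infimum holds because, for instance, a generic arrangement of $k\ge 3$ lines has $h=-2(k-2)/(k-1)<0$ and only ordinary singularities --- and $C_0$ is singular, so $s:=|\MSing(C_0)|\ge 1$. Fix an integer $n\ge 2$ with $n^2>-h(C_0)$ and consider the Kummer cover $f\colon\bbP^2\to\bbP^2$, $f([x:y:z])=[x^n:y^n:z^n]$, whose branch locus is the union of the three coordinate lines and which is totally ramified (locally $(s,t)\mapsto(s^n,t^n)$) at the three coordinate vertices.

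The idea is to choose the coordinates so that $f^*(C_0)$ acquires exactly one new singular point, an ordinary one. I would impose the following generic conditions on the coordinate triangle: (a) no coordinate line is a component of $C_0$; (b) one vertex $v_1$ lies on $C_0$ at a smooth point where the tangent of $C_0$ is distinct from both coordinate lines through $v_1$; (c) the other two vertices are off $C_0$; (d) every point of $\MSing(C_0)$ lies off the three coordinate lines; and (e) $C_0$ meets each coordinate line transversally, at smooth points, away from $v_1$. These are finitely many dense open conditions on the $8$-dimensional choice of coordinates, hence simultaneously satisfiable.

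Condition (a) guarantees that $f^*(C_0)$ is reduced. A local multiplicity computation then shows that $\MSing(f^*(C_0))$ consists of exactly: the $k:=\deg f=n^2$ isomorphic copies over each point of $\MSing(C_0)$ (since $f$ is a local biholomorphism off the branch locus, and by (d) all singularities of $C_0$ lie off it), together with a single ordinary $n$-fold point at $v_1$ (a smooth branch transverse to both coordinate lines pulls back under $(s,t)\mapsto(s^n,t^n)$ to $s^n=c\,t^n+\cdots$ with $c\ne 0$, i.e.\ $n$ distinct lines). Conditions (c) and (e) ensure there are no further multiple points: off the vertices $f^*(C_0)$ is smooth along the coordinate lines by transversality, while the remaining two vertices are not on $f^*(C_0)$. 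In particular, if $C_0$ has only ordinary singularities then so does $f^*(C_0)$.

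It remains to compute. Since $(f^*C_0)^2=k\,C_0^2$, since $\sum_{q\in\MSing(f^*C_0)}\mult_q(f^*C_0)^2=k\sum_{q\in\MSing(C_0)}\mult_q(C_0)^2+n^2$, and since $|\MSing(f^*C_0)|=ks+1$, writing $h_0=h(C_0)$ and using $C_0^2-\sum_{q\in\MSing(C_0)}\mult_q(C_0)^2=s\,h_0$ gives
\[
h(f^*C_0)=\frac{ks\,h_0-n^2}{ks+1}<h_0\qquad\Longleftrightarrow\qquad -n^2<h_0,
\]
which holds by the choice of $n$. Thus $f^*(C_0)$ lies in the same class (and has ordinary singularities whenever $C_0$ does) yet $h(f^*(C_0))<h(C_0)$, contradicting minimality; this settles both statements at once. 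Note that this argument deliberately sidesteps the threshold $-4$ of Remark \ref{rmk:h-4} and of the preceding corollary: the decrease is produced by the single extra ordinary point, not by ramification over $\MSing(C_0)$, so it works regardless of whether $h_0\ge -4$. I expect the main obstacle to be precisely the local singularity analysis of the previous paragraph together with the simultaneous realizability of (a)--(e), i.e.\ verifying that $f^*(C_0)$ has exactly the claimed multiple points and no others.
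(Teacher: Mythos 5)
Your proposal is correct and takes essentially the same route as the paper's own proof: both place one vertex of a Kummer cover $[x:y:z]\mapsto[x^n:y^n:z^n]$, with $n$ chosen so that $-n^2<h(C_0)$, at a smooth point of $C_0$ with the rest of the coordinate triangle generic, observe that the pullback replicates each singularity $\deg f$ times and adds exactly one new ordinary point of multiplicity $n$, and conclude with the identical index computation $h(f^*C_0)=\bigl(k s\,h_0-n^2\bigr)/(ks+1)<h_0$. The only difference is that you make explicit the transversality/genericity checks along the branch triangle that the paper leaves implicit.
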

\begin{proof}
	We shall show that, given a particular reduced curve $C\subset \bbP^2$ with $r\geq 2$ singular points and $h(C)<0$, there exists another curve $C'$ with $h(C')<h(C)$, and if $C$ has ordinary singularities then $C'$ can be chosen with ordinary singularities too.
	So let $p$ be a nonsingular point of $C$, and choose coordinates in $\bbP^2$ such that
	\begin{enumerate}
		\item $p=[1:0:0]$ 
		\item The points $[0:1:0]$ and $[0:0:1]$ do not belong to the curve, and none of the coordinate lines is tangent to $C$ or passes through a singular point of $C$ at $P$.
	\end{enumerate}
	Choose an integer $k$ such that $-k^2<h(C)$ and
	consider the Kummer cover $f:\bbP^2 \rightarrow \bbP^2$ given coordinate-wise by $f([x:y:z])=[x^k:y^k:z^k]$, which is a morphism of degree $k^2$ branched along the coordinate triangle and has multiplicity $k$ at the (fixed) point $p$. 
	The singularities of $f^*(C)$ are as follows:
	\begin{itemize}
		\item For each singular point $q$ of $C$ there are $k^2$ locally isomorphic singularities in the $k^2$ distinct preimage points of $q$.
		\item There is an ordinary singularity of multiplicity $k$ at $p$.
	\end{itemize}
	Note that, if $C$ has ordinary singularities, then so does $f^*(C)$.
	
	Denote $\mathcal{K}=\WSing(C)$ the weighted multi-cluster of multiple points of $C$. We have
	\begin{equation}\label{eq:h-pullback}
	\begin{gathered}
	h(f^*(C))=H(f^*(C),f^*(K) \cup \{p\})=\frac{k^2 d^2-k^2 \mathcal{K}^2-k^2}{k^2|K|+1}=\\
	\frac{k^2|K|\,H(C,K)-k^2}{k^2|K|+1}<\frac{k^2|K|\,h(C)+h(C)}{k^2|K|+1}
	=h(C),
	\end{gathered}
	\end{equation}
	as claimed. 
\end{proof}

 Note that Theorem \ref{intro:strict} and Proposition \ref{pro:infimum} do not mean that the values of $H$-constants are \emph{not bounded from below} -- some examples of sequences of reduced curves with decreasing $H$-indices are known, which converge to finite limits. 
 So the question whether the BNC holds for blowups of the complex projective plane is open.
 The method of Proposition \ref{pro:infimum} does mean that, if there is a uniform bound $h(C)\ge h$, then for curves with a fixed number of singular points $s$ a stronger bound than $h$ can be given:
 
 \begin{proposition}\label{pro:h-bound}
 	Suppose that $\inf_{C\subset\bbP^2}h(C)=h$ for some $h\in \bbR$.
 	Then, for every reduced curve $C\subset\bbP^2$, 
 	\[h(C)\ge h + \frac{3}{|\MSing(C)|}.\]
 \end{proposition}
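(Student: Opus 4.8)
The plan is to reuse the ramified-cover construction from the proof of Proposition~\ref{pro:infimum}, but to arrange that \emph{all three} coordinate points lie on $C$; this triples the ``extra point'' contribution and is exactly what produces the numerator $3$ in the bound.

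Fix a reduced curve $C\subset\bbP^2$ of degree $d$ with $s=|\MSing(C)|\ge 1$, and write $\mathcal{K}=\WSing(C)$, so that $h(C)=(d^2-\mathcal{K}^2)/s$. First I would choose homogeneous coordinates so that $[1:0:0]$, $[0:1:0]$, $[0:0:1]$ are smooth points of $C$, no coordinate line passes through a point of $\MSing(C)$, no coordinate line is tangent to $C$, and no coordinate line is a component of $C$. Such coordinates exist for any singular reduced curve: its smooth locus is a dense $1$-dimensional set, generic triples of smooth points are non-collinear and hence determine a coordinate system, and each of the listed constraints fails only on a proper closed subset of the parameter space of admissible triples (collinearity, passing through one of the finitely many singular points, tangency---governed by the $1$-dimensional dual curve---and coinciding with one of the finitely many line components).

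Next, for each integer $k\ge 2$, I would take the Kummer cover $f_k([x:y:z])=[x^k:y^k:z^k]$ of degree $k^2$, whose branch locus is the coordinate triangle. As in Proposition~\ref{pro:infimum}, $f_k^*(C)$ is reduced because no coordinate line is a component of $C$. The key geometric step is the singularity count: $f_k$ is a local isomorphism at each of the $k^2$ preimages of a point of $\MSing(C)$, so these reproduce the cluster $\mathcal{K}$ verbatim (equivalently, $\deg_p(f_k)=1$ in Corollary~\ref{cor:multiplicative}), while transversality of $C$ to the two coordinate lines through each coordinate point makes that point an ordinary $k$-fold point of $f_k^*(C)$; transversality along the rest of the coordinate triangle guarantees $f_k^*(C)$ is smooth at every other ramification point. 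Hence
\[
\MSing(f_k^*(C))=f_k^*(\MSing(C))\cup\{[1:0:0],[0:1:0],[0:0:1]\},
\]
and since $\deg f_k^*(C)=kd$ this gives
\[
h(f_k^*(C))=\frac{k^2d^2-k^2\mathcal{K}^2-3k^2}{k^2s+3}=\frac{k^2s\,h(C)-3k^2}{k^2s+3}.
\]

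Finally, because $f_k^*(C)$ is a reduced singular plane curve, $h(f_k^*(C))\ge h$. Substituting the formula and clearing the positive denominator yields, for every $k\ge 2$,
\[
h(C)\ge h+\frac{3}{s}+\frac{3h}{k^2s},
\]
and letting $k\to\infty$ gives $h(C)\ge h+3/s$. The main obstacle is the singularity analysis of the third paragraph: one must verify carefully that transversality of $C$ to the coordinate triangle forces $f_k^*(C)$ to be smooth at every ramification point outside the three coordinate vertices, so that the Harbourne index of $f_k^*(C)$ is genuinely computed on the multi-cluster $f_k^*(\MSing(C))\cup\{\text{vertices}\}$ and no unaccounted multiple points lower it further.
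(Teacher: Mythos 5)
Your proof is correct and takes essentially the same approach as the paper: choose coordinates so that the three vertices are smooth points of $C$ and the coordinate triangle meets $C$ transversally, pull back by the Kummer cover $[x:y:z]\mapsto[x^k:y^k:z^k]$, observe that $h\le h(f_k^*(C))=\frac{k^2d^2-k^2\mathcal{K}^2-3k^2}{k^2 s+3}$, and let $k\to\infty$. The only difference is that you spell out the genericity of the coordinate choice and the local analysis of the pullback's singularities, which the paper leaves implicit.
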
 
 \begin{proof}
 	Choose coordinates on $\bbP^2$ such that the three coordinate vertices lie on smooth points of $C$, and each coordinate line meets $C$ in exactly $d=\deg(C)$ distinct points.
 	Consider the Kummer cover $f([x:y:z])=[x^k:y^k:z^k]$, and let $\mathcal{K}=\MSing(C)$ as before. Then
\begin{equation*}
h\le h(f^*(C))=\frac{k^2 d^2-k^2 \mathcal{K}^2-3k^2}{k^2|K|+3}=
h(C)\frac{k^2|K|}{k^2 |K| +3}-\frac{3k^2}{k^2|K|+3}.
\end{equation*} 	
The limit of the right hand side for $k\to \infty$ is $h(C)-3/|K|$, whence the claim.
 \end{proof}

\subsection{Examples}
\label{sec:examples}

\subsubsection*{Fermat arrangements}
We observe that some of the known curve arrangements with negative $H$-indices can be obtained as pullbacks of simpler arrangements by suitable ramified maps.

Let $C\subset \bbP^2$ be a reducible cubic made up of three concurrent lines; for simplicity assume it is given by the homogeneous equation $(x-y)(y-z)(z-x)=0$.  
Obviously $\MSing(C)=\{p\}$ is the single point $p=[1:1:1]$ with multiplicity 3, and $h(C)=0$.
Let $f_k:\bbP^2\rightarrow\bbP^2$ be the Kummer cover $f_k([x:y:z])=[x^k:y^k:z^k]$. 
The so-called \emph{$k$-th Fermat arrangement of lines} is the reduced curve $f_k^*(C)$, which has $k^2$ triple points, three points of multiplicity $k$, and computing as in \eqref{eq:h-pullback} we obtain $h(f_k^*(C))=-3k^2/(k^2+3)$.


\subsubsection*{Curves with ordinary singularities}

	In the proof of Propositions \ref{pro:infimum} and \ref{pro:h-bound} we used morphisms which have multiplicity $>1$ at smooth points of a given curve $C$, for simplicity.
	In practice, in the search for curves with very negative $H$-indices, using a morphism with multiplicity $>1$ at singular points of $C$ turns out to be more effective.

For instance, if we apply the strategy of Proposition \ref{pro:submultiplicative} to the Wiman configuration $W$ of 45 lines with 201 singular points and $h$-index $-225/67 \simeq -3.358$ we can deduce the existence of curves with ordinary singularities and $h$-index arbitrarily close to $-225/67-3/201=-226/67\simeq -3.373$.

However, we can obtain a more negative index. Consider a projective coordinate system which has its three coordinate points sitting in triple points of the Wiman configuration $W$ of 45 lines \cite{BdRHHLPSz} and such that none of the coordinate lines belong to $W$.
Then the intersection of each coordinate line with $W$ consists of the two chosen triple points and 39 transverse intersections with the lines not going through the triple points (this is presumably well known, we checked it using Singular).
Denote as before $\mathcal{K}=\WSing(W)$, and apply the Kummer cover $f([x:y:z])=[x^k:y^k:z^k]$ to $W$. 
Each vertex $p$ of the coordinate system is its unique preimage, and $f^*(W)$ has an ordinary singularity of multiplicity $k^2 \mult_{p}(W)$ there, so:
\[
h(f^*(W))=\frac{45^2\,k^2 -k^2 \mathcal{K}^2}{k^2(|K|-3)+3}=
h(C)\frac{k^2|K|}{k^2 (|K|-3) +3}=-\frac{225}{67}\cdot\frac{201\,k^2}{198\,k^2+3}.
\]
By taking large values of $k$, we see that \emph{there exist reduced curves $C\subset \bbP^2$ with ordinary singularities and Harbourne index arbitrarily close to} 
\[-\frac{225}{67}\cdot\frac{201}{198}=-\frac{25}{7}\simeq -3.571.\] 
This proves Theorem \ref{intro:newrecord}.
\subsubsection*{Klein-invariant configurations of higher degree}

In \cite{PR}, we described the singularities of the configuration of 21 reducible polars to the Klein quartic 
$$\Phi_{4}: x^{3}y + y^{3}z +z^{3}x,$$
computing in particular their $H$-constants, and we introduced additional very negative configurations of curves of higher degree. 
We next recall the construction and give an explicit description of some clusters of singular points of these configurations, leading to a bound on their $h$-indices.

Denote $f :\bbP^2\rightarrow \bbP^2$ the gradient map given 
by the partial derivatives of Klein's quartic equation, explicitly
$$\mathbb{P}^{2} \ni [x:y:z] \overset{9:1}{\longmapsto} [u:v:w]=\left[3x^{2}y + z^{3}: 3y^{2}z + x^{3}: 3z^{2}x+y^{3} \right] \in \mathbb{P}^{2}.$$
Let also $\Phi_{21}$ be the polynomial of degree $21$, invariant under the group $G_{168}$ of projectivities fixing $\Phi_{4}$, which defines the so-called Klein configuration $K=K_0$ of 21 lines.
It was showed in \cite[Proposition 2.1]{PR} that $\Phi_{63}=f^*(\Phi_{21})$ defines the configuration $K_1$ of 21 reducible polars of Klein's quartic, which splits as $\Phi_{63}=\Phi_{42}\Phi_{21}$, where $\Phi_{42}=0$ is a configuration of 21 irreducible conics.
Iterating the process, let $\Phi_{189}=f^*(\Phi_{63})$, splitting as $\Phi_{189}=\Phi_{126}\Phi_{42}\Phi_{21}$, where $\Phi_{126}=f^*(\Phi_{42})$ is an invariant configuration of $21$ sextics, and in general for $k\ge 1$,
\[
(f^k)^*(\Phi_{21})=\Phi_{14\cdot 3^{k}}\cdots 
\Phi_{42}\Phi_{21}.
\]
We shall not attempt at a complete description of the singularities of the configurations $K_k: (f^k)^*(\Phi_{21})=0$, but we focus on the singularities lying on the preimage of the singular points of $K_2:\Phi_{63}=0$; these are enough to show that the Harbourne index $h_k$ of $K_k$ is a decreasing sequence whose limit is at most $-1283/410\simeq-3.123$.

\begin{proposition} \label{pro:singularities_polars}
The singularities of the arrangement $K_2$ are $42$ nodes, $252$ ordinary triple points and $189$ ordinary quadruple points.
\end{proposition}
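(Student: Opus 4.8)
The plan is to identify the singularities of $K_2:\Phi_{63}=0$ by analyzing the map $f$ applied to the Klein line arrangement $K_0:\Phi_{21}=0$, whose singularities are classically known. The configuration $K_2$ is $f^*(\Phi_{21})=\Phi_{42}\Phi_{21}$, which is the Klein arrangement of $21$ lines together with $21$ conics. First I would recall the singularity type of the Klein configuration $K_0$ itself: it has a well-documented set of singular points (triple and higher-order points arising from the $G_{168}$-symmetry), and I would locate the points of $K_0$ that lie on the branch locus (the ramification of the degree-$9$ gradient map) versus those off it. Since $\Phi_{63}=f^*(\Phi_{21})$, every singular point of $K_2$ lies over a point of $\bbP^2$ that is either a singular point of $\Phi_{21}$ or a base/ramification point of $f$; away from such points $f$ is an unramified cover and the pullback of a smooth point stays smooth.

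Second, I would organize the singularities of $K_2$ according to where they sit relative to $f$. For a point $q\in\bbP^2$ that is a transverse intersection of lines in $K_0$, its generic preimages under the $9:1$ map $f$ will be $9$ (or fewer, at ramification) locally isomorphic singularities, by the local-degree bookkeeping of Section \ref{sec:pullback_cluster}; the branches of $\Phi_{21}$ through $q$ pull back to branches of $\Phi_{63}$, and I would compute the pulled-back multiplicities via the cluster pullback $f^*(\WSing_q\Phi_{21})$ and Corollary \ref{cor:multiplicative}. The three stated singularity types---$42$ nodes, $252$ ordinary triple points, and $189$ ordinary quadruple points---should emerge as the different orbits of singular points of $K_0$ lifted through $f$, with the nodes arising from transverse line-conic or conic-conic meetings that appear only after the conic factor $\Phi_{42}$ is adjoined. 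I would therefore tabulate, for each singularity type of $\Phi_{21}$, the number of preimage points (using $\deg_p f$ and the local multiplicity $\nu_p(f)$ of the gradient map at the relevant points) and the resulting local intersection pattern, checking transversality so that each lifted singularity is ordinary.

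The main obstacle will be verifying \emph{ordinariness} and computing exact multiplicities at the points lying on the ramification locus of $f$, where $\nu_p(f)>1$ and the pullback cluster $f^*(\WSing_q\Phi_{21})$ may acquire infinitely near points or higher tangency; here the clean $9:1$ counting breaks down and I must use the explicit local form of the gradient map $f([x:y:z])=[3x^2y+z^3:3y^2z+x^3:3z^2x+y^3]$ to determine how the branches of $\Phi_{21}$ and the new conic branches of $\Phi_{42}$ meet. I expect to check, possibly with the aid of a computer algebra system as the authors did for the Wiman configuration, that these potentially problematic preimages in fact yield ordinary triple or quadruple points rather than non-ordinary singularities, so that the final count is exactly $42+252+189$ and every singularity is ordinary. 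The arithmetic consistency check is that the total Milnor/$\delta$-contribution, or equivalently the genus drop computed from $\Phi_{63}$ having degree $63$, matches the tally $42\cdot 1+252\cdot 3+189\cdot 6$ of $\delta$-invariants; confirming this identity would give confidence that no singular points have been missed.
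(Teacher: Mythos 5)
The first thing to note is that the paper contains no proof of this statement at all: immediately after the two equivalent formulations, the authors write that Proposition~\ref{pro:singularities_polars} and Lemmas~\ref{lem:tangency}, \ref{lem:reduced} ``were proven in \cite{PR}'', so the result is imported wholesale from their earlier paper on the reducible polars of Klein's quartic. There is therefore no in-paper argument to compare against; the question is whether your plan would stand on its own as a proof. Its skeleton is the natural one (and is in the spirit of \cite{PR} and of Section~\ref{sec:pullback_cluster}): analyze $\Phi_{63}=f^*(\Phi_{21})$ through the degree-$9$ gradient map, separating points lying over $\Sing(\Phi_{21})$ from singularities created by ramification. Your closing consistency check is also correct: $42\cdot 1+252\cdot 3+189\cdot 6=1932$ equals $\binom{21}{2}\cdot 1+21^2\cdot 2+\binom{21}{2}\cdot 4$, the total pairwise intersection number of the $21$ lines and $21$ conics.

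As written, however, the proposal is an outline with the decisive steps missing, and one of its guiding expectations is inverted. (a) You never record the input data: the Klein arrangement $K_0$ has exactly $28$ triple and $21$ quadruple points (all ordinary, as for any line arrangement); without these numbers the counts $252=9\cdot 28$ and $189=9\cdot 21$ cannot emerge from any bookkeeping. (b) You would need to prove that $f$ is unramified over each of these $49$ points, i.e.\ that none of their $9\cdot 49$ preimages lies on the Hessian sextic of $\Phi_4$ (the critical locus of $f$); this is precisely what makes each preimage a locally isomorphic, hence ordinary, triple or quadruple point, and no argument is offered. (c) Your picture of where each singularity type comes from is backwards: you expect the ramification points to ``yield ordinary triple or quadruple points'' and attribute the nodes to new line--conic or conic--conic meetings, whereas in fact the triple and quadruple points are the \emph{unramified} lifts of the $49$ old singular points, while the $42$ nodes occur \emph{exactly at critical points of $f$ lying over smooth points of $\Phi_{21}$} --- the orbit $\Orb_{42}$, where the pullback of a smooth line germ acquires two transverse branches, one on a line and one on a conic (this is the $k=1$ case of Lemma~\ref{lem:tangency} and Proposition~\ref{pro:sing42}); all other line--conic and conic--conic intersections are absorbed into the $252+189$ lifted points. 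Moreover, since $f$ ramifies along a whole sextic curve rather than at finitely many points, ``tabulating preimages'' does not reduce the problem to finite bookkeeping: one must show that the tangency condition (image of $df_q$ contained in the tangent line of $\Phi_{21}$ at $f(q)$) holds exactly along $\Orb_{42}$, with local degree $3$ and nodal pullback there. Deferring all of this to a computer-algebra verification is consistent with how \cite{PR} actually proceeds, but it leaves your text short of a proof.
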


This can be equivalently stated as follows:

\begin{proposition} \label{pro:cluster_polars}
	The multi-cluster $\WSing(K_1)$ of singularities of the arrangement of reducible polars of Klein's quartic consists of $483$ clusters of one point each, of which $42$ have multiplicity $2$, $252$ have multiplicity $3$, and $189$ have multiplicity $4$.
\end{proposition}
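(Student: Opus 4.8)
The plan is to prove Proposition \ref{pro:cluster_polars} (equivalently \ref{pro:singularities_polars}) by pulling back the known singularities of the Klein configuration $K_0:\Phi_{21}=0$ through the gradient map $f$ and then again to describe $K_1$. First I would recall or establish the singularity structure of the base configuration $K_0$ of $21$ lines: it is the classical Klein arrangement, whose singular points consist of the well-documented collection of triple and quadruple points arising from the action of $G_{168}$. Concretely one should record how many singular points of each multiplicity $K_0$ has, since these are the clusters $\mathcal{K}=\WSing(\Phi_{21})$ that we will transport. The reducible polars $K_1:\Phi_{63}=\Phi_{42}\Phi_{21}$ equal $f^*(\Phi_{21})$, so its singularities are governed by the pullback-cluster machinery of section \ref{sec:pullback_cluster}.

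The key computational tool is the local analysis of $f$ at the points of $\MSing(\Phi_{21})$, together with its ramification. Since $f$ is the gradient map of degree $9$, I would first determine, for each singular point $p'$ of $K_0$, the fiber $f^{-1}(p')$ and the local multiplicities $\nu_p(f)$ at each preimage $p$. Away from the branch/ramification locus, $f$ is an unramified $9:1$ cover, so over an ordinary singularity $p'$ of multiplicity $m$ that lies off the critical values, there are $9$ preimages, each carrying an isomorphic ordinary $m$-fold point; Corollary \ref{cor:multiplicative} and the fact that $\nu_p(f)=1$ there guarantee the pulled-back cluster has the same multiplicity. Over singular points that \emph{do} lie on critical values, or over points where $f$ has higher local multiplicity, the preimage count drops and the local multiplicity of the pulled-back singularity changes according to $(f^*\mathcal{K})^2=\deg_p(f)\cdot\mathcal{K}^2$, and the singularity type must be read off from the local form of $f$ in coordinates adapted to the cusp/ramification. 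I would organize the count by stratifying the singular points of $K_0$ according to their position relative to the $G_{168}$-orbits of special points (the fixed points of the group action, which are exactly where $f$ ramifies).

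The main obstacle I expect is the bookkeeping at the ramified preimages: one must verify that the new singularities introduced are exactly the claimed $42$ nodes, $252$ triple points and $189$ quadruple points, and in particular that no non-ordinary (infinitely near) singularities appear. This requires a careful local computation of $f$ near each type of special point, using the explicit cubic gradient formulas $[3x^2y+z^3:3y^2z+x^3:3z^2x+y^3]$, to check that the branches of $\Phi_{42}$ and $\Phi_{21}$ meet transversally and that no tangencies or higher-order contacts are created. The numerical consistency check is that the total intersection-multiplicity bookkeeping matches: summing $\binom{m}{2}$ over all singularities should reproduce the expected number of intersection points predicted by the degrees $\deg\Phi_{42}=42$ and $\deg\Phi_{21}=21$ and their pairwise intersection numbers via B\'ezout, i.e.\ one verifies $\binom{42}{2}\cdot 2 + 42\cdot 21$-type identities are accounted for by $42\binom{2}{2}+252\binom{3}{2}+189\binom{4}{2}$. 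If this global count balances, it confirms the local classification is complete and that there are no missed infinitely near points.

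In practice I would carry out the verification by combining the abstract pullback formulas of section \ref{sec:pullback_cluster} with an explicit (possibly computer-algebra assisted) analysis of the few orbits of special points, reducing the whole problem to a finite number of local normal-form computations; the orbit structure under $G_{168}$ then propagates each local answer across the full configuration, yielding the stated totals.
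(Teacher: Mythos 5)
First, a point of reference: the paper itself does not prove this proposition. Immediately after stating it, the paper says that Proposition \ref{pro:singularities_polars} (together with Lemmas \ref{lem:tangency} and \ref{lem:reduced}) was proven in \cite{PR}; the statement is imported, not re-derived. So your outline has to stand on its own merits, and it has a structural gap: your entire computation is organized around the fibers of $f$ over the \emph{singular} points of $K_0$ — you stratify $\MSing(K_0)$ by its position relative to the ramification and pull back the clusters $\WSing_p(K_0)$. That part of the plan is sound and accounts for the $252 = 9\cdot 28$ triple points and $189 = 9\cdot 21$ quadruple points (the Klein arrangement has $t_3=28$ and $t_4=21$, and granting — as you would also need to check — that these orbits avoid the branch curve, each has nine preimages with locally isomorphic singularities). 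But it can never produce the $42$ nodes. A pullback $f^*(C)$ can be singular at a point $p$ even when $f(p)$ is a \emph{smooth} point of $C$: writing $g$ for a local equation of $C$, one has $d(g\circ f)_p = dg_{f(p)}\circ df_p$, which vanishes exactly when $p$ is a critical point of $f$ whose differential image lies in the tangent line of $C$ at $f(p)$. The nodes of $K_1$ are precisely of this kind: they form the orbit $\Orb_{42}$, which is disjoint from $\MSing(K_0)$ and satisfies $f(\Orb_{42})=\Orb_{42}$ (this is the fact the paper uses later, where $f$ is shown to have local degree $3$ at these points); each node is a transverse intersection of one of the $21$ lines with one of the $21$ conics, sitting over a smooth point of $K_0$ on the critical curve of $f$. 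No refinement of your fiber-by-fiber analysis over $\MSing(K_0)$, and no application of the pullback-cluster machinery of section \ref{sec:pullback_cluster} to $\WSing(K_0)$, will find them. Relatedly, your assertion that $f$ ramifies exactly at the fixed points of the $G_{168}$-action is false: the critical locus of the gradient map is the Hessian curve of $\Phi_4$ (a $G_{168}$-invariant sextic), a curve and not a finite set, and it is its interaction with the $21$ lines that creates the new singularities.

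To your credit, the B\'ezout consistency check you propose would expose the omission, although your formula for it is garbled. The components of $\Phi_{63}$ are $21$ lines and $21$ smooth conics, so the sum of all pairwise intersection numbers is $\binom{21}{2}\cdot 1 + \binom{21}{2}\cdot 4 + 21^2\cdot 2 = 1932$, whereas the singularities lying over $\MSing(K_0)$ only supply $252\cdot 3 + 189\cdot 6 = 1890$; the deficit of $42$ is exactly the missing nodes. But detecting a deficit is not a proof: one must still locate these extra singular points on the critical curve, show there are exactly $42$ of them, that each is an ordinary double point, and that nothing else — in particular nothing infinitely near — occurs. That local analysis is precisely the content the paper imports from \cite{PR}, and it is absent from your outline.
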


Proposition \ref{pro:singularities_polars} and Lemmas \ref{lem:tangency}, \ref{lem:reduced} below were proven in \cite{PR}.
We denote $\Orb_{42}$ the set of 42 nodes of $K_2$ (which is the unique orbit of size 42 for the Klein group $G_{168}$). 

\begin{lemma}\label{lem:tangency}
For every $k\ge 1$, $\Phi_{14\cdot 3^{k}}$ is smooth along $\Orb_{42}$.
Moreover, the local intersection multiplicity at $p\in \Orb_{42}$
of $\Phi_{14\cdot 3^{k}}$ and $\Phi_{14\cdot 3^{k+1}}$ is $2\cdot 3^{k-1}$.
\end{lemma}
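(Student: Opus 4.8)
The plan is to exploit the equivariance of the gradient map $f$ under the Klein group $G_{168}$ together with the multiplicativity of local intersection numbers under pullback. Since $\Orb_{42}$ is the unique orbit of size $42$, the group acts transitively on it, so by symmetry it suffices to verify both assertions at a single representative point $p \in \Orb_{42}$; all other points of the orbit behave identically. First I would use Proposition \ref{pro:singularities_polars}, which tells us that $\Orb_{42}$ consists precisely of the $42$ nodes of $K_2 : \Phi_{63} = \Phi_{42}\Phi_{21} = 0$. A node of the product means that at $p$ one smooth branch of $\Phi_{42}$ meets one smooth branch of $\Phi_{21}$ transversally (or two smooth branches of a single factor meet transversally); in either case each of $\Phi_{42}$ and $\Phi_{21}$ is \emph{smooth} at $p$, which already gives the base case $k=1$ of the smoothness claim, since $\Phi_{14\cdot 3} = \Phi_{42}$.

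For the smoothness statement for general $k$, I would argue inductively using the factorization $(f^{k})^*(\Phi_{21}) = \Phi_{14\cdot 3^{k}}\cdots\Phi_{42}\Phi_{21}$, together with the pullback relation $\Phi_{14\cdot 3^{k+1}} = f^*(\Phi_{14\cdot 3^{k}})$. The key geometric input is that the preimages under $f$ of the points of $\Orb_{42}$ are again points of $\Orb_{42}$ (because $\Orb_{42}$ is $G_{168}$-stable and $f$ is $G_{168}$-equivariant, so $f$ permutes the orbit); thus to understand $\Phi_{14\cdot 3^{k+1}} = f^*(\Phi_{14\cdot 3^{k}})$ near a point $p\in\Orb_{42}$, I study $f$ near $p' = f(p) \in \Orb_{42}$. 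Since $\Orb_{42}$ avoids the ramification locus and the critical points of the gradient map (this should follow from the description in \cite{PR} — one must check $\nu_{p}(f)=1$ and that $f$ is a local biholomorphism at these points), the pullback of a curve smooth at $p'$ is smooth at $p$, giving the inductive step.

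For the intersection multiplicity claim, I would set up the computation via Corollary \ref{cor:multiplicative} and the multiplicativity of local intersection numbers under the local biholomorphism/covering $f$. Concretely, the local intersection multiplicity $(\Phi_{14\cdot 3^{k}}\cdot\Phi_{14\cdot 3^{k+1}})_p$ equals $(\Phi_{14\cdot 3^{k}}\cdot f^*(\Phi_{14\cdot 3^{k}}))_p$, and I would relate this to intersection numbers upstairs and downstairs using the fact that $f$ has local degree controlling how branches pull back. The base case $k=1$ is $(\Phi_{42}\cdot\Phi_{126})_p = (\Phi_{42}\cdot f^*(\Phi_{42}))_p = 2\cdot 3^{0} = 2$; I expect this to reduce to a direct local computation at the node, where one branch is $\Phi_{42}$ and the other is its pullback under the degree-$9$ gradient map. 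The recursion $2\cdot 3^{k-1} \mapsto 2\cdot 3^{k}$ — a factor of $3$ at each step — should emerge from the local behaviour of $f$ along the two branches meeting at $p$: one branch is preserved (intersection contribution unchanged) while the ramification/covering structure of $f$ in the transverse direction multiplies the tangency order by $3$.

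The main obstacle will be the base case and the precise local analysis of $f$ at a point of $\Orb_{42}$: I must pin down exactly how the two smooth branches at the node transform under the gradient map, and extract the factor of $3$ governing the growth of the intersection multiplicity. This is a genuinely local and somewhat delicate computation with the explicit cubic partials defining $f$; the inductive propagation via equivariance is comparatively routine once the local picture at one representative point is fully understood. I would rely on the explicit coordinate description of $f$ and the branch data recorded in \cite{PR} to carry out this step.
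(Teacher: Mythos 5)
Your proposal cannot be compared line-by-line with a proof in the paper, because the paper gives none: immediately after the statement the authors write that Lemma \ref{lem:tangency} (together with Proposition \ref{pro:singularities_polars} and Lemma \ref{lem:reduced}) ``were proven in \cite{PR}''. But the facts the paper does record about this geometry directly contradict the pivot of your argument. Your inductive step for smoothness assumes that $\Orb_{42}$ avoids the ramification locus of the gradient map, i.e.\ that $f$ is a local biholomorphism at each $p\in\Orb_{42}$. This is false: the paper states, right after the lemma, that ``$f(\Orb_{42})=\Orb_{42}$ and $f$ has local degree $3$ at every point of $\Orb_{42}$''. (Note that $\nu_p(f)=1$ does \emph{not} imply local biholomorphism; by \eqref{eq:local_degree}, $\deg_p(f)=\sum\nu_q^2$ over the base cluster, which is perfectly compatible with $\nu_p(f)=1$ and $\deg_p(f)=3$.) Worse, your assumption is incompatible with the very formula you want to prove: a local biholomorphism preserves local intersection multiplicities, so under your hypothesis the sequence $I_p\bigl(\Phi_{14\cdot 3^{k}},\Phi_{14\cdot 3^{k+1}}\bigr)$ would be \emph{constant} in $k$, not $2\cdot 3^{k-1}$. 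The factor of $3$ per step that you hope will ``emerge from the ramification/covering structure'' is precisely the local degree you assumed away: the correct recursion is $I_p(f^*A,f^*B)=\deg_p(f)\,I_{f(p)}(A,B)=3\,I_{f(p)}(A,B)$ applied to the pair $A=\Phi_{14\cdot 3^{k}}$, $B=\Phi_{14\cdot 3^{k+1}}$ --- both curves pull back, rather than your picture of ``one branch preserved, the tangency in the transverse direction multiplied by $3$''. So the two halves of your proposal cannot both run, and as written neither assertion is actually established.

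There are further gaps. Your base case asserts that at a node of $\Phi_{63}=\Phi_{42}\Phi_{21}$ ``in either case each of $\Phi_{42}$ and $\Phi_{21}$ is smooth at $p$'' --- false if both branches came from the same factor (that factor would then be nodal, not smooth); one needs the fact, established in \cite{PR}, that each node is a transverse intersection of a line of $\Phi_{21}$ with a conic of $\Phi_{42}$. Your claim that ``the preimages under $f$ of the points of $\Orb_{42}$ are again points of $\Orb_{42}$'' is also false: $f$ has degree $9$, and the paper writes $f^{-1}(\Orb_{42})=\Orb_{42}\cup X$ with $\sum_{p\in X}\deg_p f=6\cdot 42$; what you actually need is only $f(\Orb_{42})\subseteq\Orb_{42}$, and equivariance alone gives merely that $f(\Orb_{42})$ is an orbit of size dividing $42$, leaving the orbit of size $21$ to be excluded by a separate argument. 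Finally, once ramification is present, smoothness of $f^*(\Phi_{14\cdot 3^{k}})$ at $p$ is genuinely delicate: the pullback of a smooth branch through a point of local degree $3$ can be singular or non-reduced (in the model $(x,y)\mapsto(x,y^3)$, the branch $\{v=u^2\}$ pulls back to a cusp and $\{v=0\}$ to a triple line), so the induction requires the precise local form of $f$ at $\Orb_{42}$ and the position of each branch relative to the branch curve. That explicit local analysis is the real content of the lemma, and it is exactly what \cite{PR} --- the source the paper cites instead of giving a proof --- carries out.
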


\begin{lemma}\label{lem:reduced}
	For every $k\ge 1$, $K_k$ is reduced.
\end{lemma}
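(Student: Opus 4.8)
The plan is to prove the statement by induction on $k$, using the single identity $K_k=f^*(K_{k-1})$ together with the elementary criterion for reducedness of pullbacks under a finite map. Concretely, if $g\colon S\to S'$ is finite and $D\subset S'$ is reduced, then $g^*(D)$ is reduced if and only if no irreducible component of $D$ is contained in the branch locus $B$ of $g$: the multiplicity of a component $E$ of $g^{-1}(D_i)$ in $g^*(D_i)$ equals the ramification index of $g$ along $E$, which is $1$ exactly when $D_i\not\subseteq B$, and distinct components of $D$ have component-disjoint preimages because $g$ is finite. Taking $D=K_{k-1}$ and $g=f$, and starting from the obviously reduced curve $K_0=\{\Phi_{21}=0\}$ (the $21$ distinct Klein lines), the induction closes as soon as we show that no component of any $K_{k-1}$ is contained in the branch locus $B$ of the gradient map $f=\nabla\Phi_4$.

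The key is a degree bound on $B$. The ramification divisor of $f$ is the Hessian of $\Phi_4$, a sextic $R$; since $f$ is given by cubics one has $f^*H=3H$, so
\[
\deg B\le \deg f_*R = 3\deg R = 18.
\]
On the other hand, each factor $\Phi_{21},\Phi_{42},\dots,\Phi_{14\cdot 3^{k-1}}$ of $K_{k-1}$ is a single $G_{168}$-orbit of $21$ curves (classical for the Klein lines, and propagated through the equivariant pullbacks $f^*\Phi_{21}=\Phi_{42}\Phi_{21}$, $f^*\Phi_{14\cdot 3^j}=\Phi_{14\cdot 3^{j+1}}$), hence a $G_{168}$-invariant curve of degree at least $21$. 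Therefore, if some component of $K_{k-1}$ were contained in $B$, invariance of $B$ would force its entire orbit --- a curve of degree $\ge 21$ --- to lie in $B$, contradicting $\deg B\le 18$. Thus no component of $K_{k-1}$ lies in $B$, and by the criterion $K_k=f^*(K_{k-1})$ is reduced.

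The main obstacle is the geometric input that each configuration $\Phi_{14\cdot 3^j}$ is a \emph{single} $G_{168}$-orbit, equivalently that it contains no proper invariant subcurve of degree $\le 18$; this is where the specific structure of the Klein action enters, and it follows from the classical description of the orbit of $21$ lines together with the equivariance of $f$ (as recorded in \cite{PR}). Everything else is formal: the reducedness criterion for finite maps, the Hessian computation giving $\deg B\le 18$, and the induction. In particular no analysis of the (possibly non-ordinary) singularities of $K_k$ is needed for reducedness --- only the codimension-one behaviour of $f$ along $B$ matters --- while the finer local data recorded in Proposition \ref{pro:singularities_polars} and Lemma \ref{lem:tangency} is reserved for the subsequent computation of the $H$-indices $h_k$.
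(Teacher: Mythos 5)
The paper never actually proves Lemma \ref{lem:reduced}: it is quoted from \cite{PR} together with Proposition \ref{pro:singularities_polars} and Lemma \ref{lem:tangency}, so your argument must be judged on its own merits rather than against a proof in the text. Its skeleton is sound: the induction via $K_k=f^*(K_{k-1})$, the criterion that for a finite map of smooth surfaces the pullback of a reduced divisor is reduced if and only if no component of it lies in the branch curve $B$, the identification of the ramification divisor of the gradient map with the Hessian sextic $R$ of $\Phi_4$, the bound $\deg B\le \deg f_*R = R\cdot f^*H=18$, and the $G_{168}$-invariance of $B$ (which follows from the equivariance $f\circ g=\theta(g)\circ f$ with $\theta\in\Aut(G_{168})$, and the invariance of the Hessian).

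The gap is in the last step: ``if some component of $K_{k-1}$ were contained in $B$, invariance of $B$ would force its entire orbit --- a curve of degree $\ge 21$ --- to lie in $B$.'' What this needs is that \emph{every irreducible component} of $K_{k-1}$ has $G_{168}$-orbit of total degree $>18$. Your justification --- each factor $\Phi_{14\cdot 3^j}$ is a single orbit of $21$ curves --- yields this only if those $21$ curves are themselves irreducible; that is known for the lines and (from \cite{PR}) for the conics, but neither this paper nor your argument establishes it for $\Phi_{126}$ and the higher factors (the paper describes them only as invariant configurations of $21$ sextics, etc.), and your claimed equivalence between ``single orbit of $21$ curves'' and ``no proper invariant subcurve of degree $\le 18$'' fails exactly when the orbit members are reducible. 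Moreover, the gap cannot be closed by group theory alone: $G_{168}\simeq PSL(2,7)$ contains two conjugacy classes of subgroups $S_4$, the restriction of the $3$-dimensional representation to each of them is its real irreducible $3$-dimensional representation, which preserves a unique conic; hence $G_{168}$ does have orbits consisting of $7$ conics, i.e.\ invariant curves of degree $14\le 18$. So a reducible sextic with a conic component of small orbit would defeat your degree count. The fix is to use equivariance once more, and differently: since $\Phi_{14\cdot3^{j}}=f^*(\Phi_{14\cdot3^{j-1}})$ and a finite map carries irreducible components onto irreducible components, every component $E$ of $K_{k-1}$ satisfies $f^{j}(E)=L$ for a suitable $j\ge0$ and some line $L$ of the Klein configuration; equivariance then gives $\theta^{j}\bigl(\operatorname{Stab}(E)\bigr)\subseteq\operatorname{Stab}(L)$, and $|\operatorname{Stab}(L)|=168/21=8$, so the orbit of $E$ has at least $21$ members and degree $\ge 21>18$. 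With that substitution (which also renders the single-orbit claim unnecessary), your proof is complete and essentially self-contained, modulo the splitting $f^*(\Phi_{21})=\Phi_{42}\Phi_{21}$ from \cite{PR}.
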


\begin{proposition}\label{pro:sing42}
	For every $k\ge 2$, the cluster $\mathcal{S}_{p.k}$ of singular points at $p\in \Orb_{42}$ of $(f^k)^*(\Phi_{21})$ consists of $2\cdot 3^{k-2}$ points, totally ordered by infinitely-near-ness, of which 
	\begin{enumerate}
		\item $p$ has multiplicity $k+1$,
		\item the first point infinitely near to $p$ has multiplicity $k$,
		\item for every $m=2,\dots,k-1$, there are exactly $4\cdot 3^{k-m-1}$ points of multiplicity $m$.
	\end{enumerate}
\end{proposition}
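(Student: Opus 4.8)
The plan is to read the singularity of $(f^k)^*(\Phi_{21})$ at $p$ directly off the smooth branches of its irreducible factors and their mutual contacts, for which Lemma~\ref{lem:tangency} supplies exactly the needed data. First I would pin down the branches through $p$. Writing $(f^k)^*(\Phi_{21})=\Phi_{21}\cdot\prod_{m=1}^{k}\Phi_{14\cdot 3^{m}}$, each factor $\Phi_{14\cdot 3^{m}}$ is smooth along $\Orb_{42}$ by Lemma~\ref{lem:tangency}, hence contributes a single smooth branch $B_m$ through $p$; and since $p$ is one of the $42$ nodes of the reducible polar arrangement $\Phi_{63}=\Phi_{42}\Phi_{21}$ (Proposition~\ref{pro:singularities_polars}), one of whose two branches is the smooth branch $B_1$ of $\Phi_{42}$, the remaining factor $\Phi_{21}$ contributes exactly one line $L$ through $p$, transversal to $B_1$. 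This already gives $\mult_p((f^k)^*(\Phi_{21}))=k+1$, which is assertion~(1).

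Next I would determine all pairwise contacts at $p$. Lemma~\ref{lem:tangency} gives $(B_m,B_{m+1})_p=2\cdot 3^{m-1}$ for consecutive branches, and these numbers are strictly increasing in $m$; in particular consecutive branches are tangent, so all the $B_m$ share a single tangent line $T$. Writing them as graphs $y=\phi_i(x)$ over $T$, so that $(B_i,B_j)_p=\ord(\phi_i-\phi_j)$, the ultrametric property of orders together with the strict monotonicity of the consecutive contacts forces $(B_i,B_j)_p=2\cdot 3^{\min(i,j)-1}$ for all $i\neq j$. The line $L$, being transversal to $B_1$, is then transversal to $T$ and hence to every $B_m$.

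With this contact data the cluster $\mathcal{S}_{p,k}$ of multiple points is forced to be a single chain $p=q_0<q_1<\cdots$. Indeed, after blowing up $p$ all the $B_m$ meet the exceptional divisor at the one point $q_1$ determined by $T$, while $L$ drops off; at each later blowup the pairwise contacts fall by one, so the deepest-contact branches stay together and the lower-contact ones peel off one at a time, never reuniting. Concretely $B_i$ survives through $q_n$ precisely when $n<2\cdot 3^{i-1}$, so for $n\ge 1$ the multiplicity of $q_n$ equals $\#\{\,i:2\cdot 3^{i-1}>n\,\}$. Thus $q_n$ has multiplicity $m$ exactly when $n+1$ lies in the interval $(2\cdot 3^{k-m-1},\,2\cdot 3^{k-m}]$, which contains $4\cdot 3^{k-m-1}$ integers; this gives assertion~(3) for $2\le m\le k-1$, the single value $n=1$ gives multiplicity $k$ (assertion~(2)), and $1+1+\sum_{m=2}^{k-1}4\cdot 3^{k-m-1}=2\cdot 3^{k-2}$ is the total number of points.

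The hard part will be the second step. Lemma~\ref{lem:tangency} controls only consecutive contacts, so I must upgrade these to the full nested pattern $(B_i,B_j)_p=2\cdot 3^{\min(i,j)-1}$ and check that the resulting singularity is genuinely a single chain rather than a branching configuration. The ultrametric inequality for orders of smooth branches sharing a tangent, combined with the strict monotonicity $2<6<18<\cdots$ of the consecutive contacts, provides both at once: the minimum in the ultrametric estimate is attained uniquely, which both pins down the min-pattern and linearizes the proximity structure. Once this is established the counting in the third step is entirely routine.
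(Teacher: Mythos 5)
Your proof is correct and follows essentially the same route as the paper's: identify the $k+1$ smooth branches at $p$ (the line $L$ plus one branch from each factor $\Phi_{14\cdot 3^{m}}$), use the consecutive contacts $2\cdot 3^{m-1}$ from Lemma \ref{lem:tangency} to obtain the nested chain of shared infinitely near points, and then count, using $4\cdot 3^{k-m-1}=2\cdot 3^{k-m}-2\cdot 3^{k-m-1}$. The only difference is presentational: the paper directly asserts that the deepest $m$ branches share their first $2\cdot 3^{k-m}$ points, whereas you derive the full pairwise contact pattern $(B_i,B_j)_p=2\cdot 3^{\min(i,j)-1}$ via the ultrametric inequality --- a worthwhile detail that the paper leaves implicit.
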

\begin{proof}
	Let $L$ be the line, component of the Klein configuration $K_0$, through $p$, and let $C$ be the conic, component of $K_1$ through $p$.
	By Lemma \ref{lem:tangency}, the singularity of $(f^k)^*(\Phi_{21})$ at $p$ consists of $k+1$ smooth branches, of which all but $L$ are tangent. This proves the first two items. 
	Also by Lemma \ref{lem:tangency}, if $k\ge 3$ the $m$ branches of $(f^k)^*(\Phi_{21})$ corresponding to the curves $(f^{k-1})^*(C), (f^{k-2})^*(C), \dots (f^{k-m+1})^*(C)$ share their first $2\cdot 3^{k-m}$ points infinitely near to $p$ (including $p$); in particular all singular points at $p\in \Orb_{42}$ of $(f^k)^*(\Phi_{21})$ belong to the smooth branch of $(f^{k-1})^*(C)$ through $p$ and hence they are totally ordered by infinitely-near-ness. The third item follows by observing that $4\cdot 3^{k-m-1}=2\cdot 3^{k-m}-2\cdot 3^{k-(m+1)}$.
\end{proof}


%
%
Next, we will define iteratively a weighted multi-cluster which will give our upper bound for the $h$-index of the configurations $K_k$. Let $\mathcal{K}_1=\WSing(K_1)$.
By Proposition \ref{pro:cluster_polars}, we can split
\[\mathcal{K}_1= \mathcal{S} \cup \mathcal{T}  \]
where $\mathcal{S}$ consists of the points in $\Orb_{42}$ with multiplicity 2, and $\mathcal{T}$ consists of 252 points of multiplicity 3 and 189 of multiplicity 4.
For every $p\in \Orb_{42}$, denoting again by $L$ and $C$ the line and conic components of $K_1$ through $p$, Lemma \ref{lem:tangency} shows that the local intersection multiplicity at $p$ of $f^*L$ and $f^*C$ is $2+1=3$, so $f(\Orb_{42})=\Orb_{42}$ and $f$ has local degree 3 at every point of $\Orb_{42}$. Therefore we can write
\[f^{-1}(\Orb_{42})=\Orb_{42} \cup X\]
where $X$ is a finite set of points with $\sum_{p \in {X}} \deg_p f = 6\cdot 42$. 
Denote, for each $k\ge 2$, $\mathcal{S}_k=\cup_{p\in \Orb_{42}}\mathcal{S}_{p,k}$ the multi-cluster of singular points of $(f^k)^*(\Phi_{21})$ supported at $\Orb_{42}$, and split its pullback as
\[f^{*}(\mathcal{S}_k)=\mathcal{S}_k^{42} \cup \mathcal{S}_k^X\]
where $\mathcal{S}_k^{42}$ is the subcluster supported at $\Orb_{42}$ and $\mathcal{S}_k^X$ is the subcluster supported at $X$.

Finally, define
\[
\mathcal{K}_k=\mathcal{S}_k \cup \mathcal{S}_{k-1}^X \cup f^*(\mathcal{S}_{k-2}^X)
\cup \dots \cup (f^{k-2})^*(\mathcal{S}_{1}^X) \cup (f^{k-1})^*(\mathcal{T})
\]

\begin{proposition}
For every $k\ge 1$, the arrangement $K_{k}$ goes through the weighted multi-cluster $\mathcal{K}_k$.
Moreover, for every $k \geq 2$, the self-intersection and cardinality of $\mathcal{K}_k$ satisfies: 
\[
\mathcal{K}_k^2=\frac{21}{2}(53\cdot 9^{k}+3)-196\cdot 3^{k+1}, \quad
|\mathcal{K}_k|\le 84\cdot 9^{k}-28\cdot 3^{k+1}.
\]
\end{proposition}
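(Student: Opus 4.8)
The plan is to prove both assertions by induction on $k$, exploiting that $K_k=f^*(K_{k-1})$ and that $\mathcal{K}_k$ is, up to one piece, the pullback of $\mathcal{K}_{k-1}$. For the \emph{going-through} statement, the base case $k=1$ is immediate: by Proposition \ref{pro:cluster_polars} we have $\mathcal{K}_1=\WSing(K_1)$, so $K_1$ goes through $\mathcal{K}_1$ tautologically. For the inductive step I would first record the structural identity
\[
\mathcal{K}_k=\bigl(f^*(\mathcal{K}_{k-1})\setminus\mathcal{S}_{k-1}^{42}\bigr)\cup\mathcal{S}_k ,
\]
obtained by pulling $\mathcal{K}_{k-1}$ back termwise: $f^*(\mathcal{S}_{k-1})=\mathcal{S}_{k-1}^{42}\cup\mathcal{S}_{k-1}^X$, each $(f^j)^*(\mathcal{S}_{k-1-j}^X)$ becomes $(f^{j+1})^*(\mathcal{S}_{k-1-j}^X)$, and $(f^{k-2})^*(\mathcal{T})$ becomes $(f^{k-1})^*(\mathcal{T})$. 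Since $K_{k-1}$ goes through $\mathcal{K}_{k-1}$ by the inductive hypothesis, the fundamental property of the pullback cluster from Section \ref{sec:pullback_cluster} shows that $K_k=f^*(K_{k-1})$ goes through $f^*(\mathcal{K}_{k-1})$, hence through each of its subclusters; in particular $K_k$ goes through every piece of $\mathcal{K}_k$ except possibly the enlarged $\Orb_{42}$-piece. But $\mathcal{S}_k$ is \emph{by definition} the weighted cluster of singular points of $K_k$ along $\Orb_{42}$ (Proposition \ref{pro:sing42}, with $K_k$ reduced by Lemma \ref{lem:reduced}), so $K_k$ goes through $\mathcal{S}_k$ directly.

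To assemble these into "goes through $\mathcal{K}_k$" I would verify that the proper points underlying the various pieces are pairwise disjoint: $\Orb_{42}$ for $\mathcal{S}_k$, and $f^{-j}(X)$ (respectively $f^{-(k-1)}(\operatorname{supp}\mathcal{T})$) for the remaining pieces. Using $f(\Orb_{42})=\Orb_{42}$, $X=f^{-1}(\Orb_{42})\setminus\Orb_{42}$, and $\operatorname{supp}\mathcal{T}\cap\Orb_{42}=\emptyset$, any point carried by an iterate of $f$ into $X$ or into $\operatorname{supp}\mathcal{T}$ cannot lie in $\Orb_{42}$, and two such preimage sets for distinct exponents are disjoint as well. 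Because going through a weighted cluster is a condition local to each proper base point, disjoint supports let the property decompose, and $K_k$ goes through the union $\mathcal{K}_k$.

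For the numerics I would use that both $\mathcal{K}_k^2$ and $|\mathcal{K}_k|$ are additive over the disjoint pieces, so it suffices to evaluate each summand in
\[
\mathcal{K}_k^2=\mathcal{S}_k^2+\sum_{j=0}^{k-2}\bigl((f^j)^*\mathcal{S}_{k-1-j}^X\bigr)^2+\bigl((f^{k-1})^*\mathcal{T}\bigr)^2 .
\]
Since $f$ is finite of degree $9$ (no contracted curve), Corollary \ref{cor:multiplicative} gives $((f^j)^*\mathcal{A})^2=9^j\,\mathcal{A}^2$ for any consistent weighted cluster $\mathcal{A}$; and as $f$ has local degree $3$ along $\Orb_{42}$ while the remaining local degrees over a point of $\Orb_{42}$ total $6$ (this is the content of $\sum_{p\in X}\deg_p f=6\cdot 42$), the two parts of $f^*\mathcal{S}_j$ carry self-intersections $(\mathcal{S}_j^{42})^2=3\mathcal{S}_j^2$ and $(\mathcal{S}_j^X)^2=6\mathcal{S}_j^2$. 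Feeding in the closed form of $\mathcal{S}_{p,k}^2$ read off from Proposition \ref{pro:sing42} (one point of multiplicity $k+1$, one of multiplicity $k$, and $4\cdot 3^{k-m-1}$ of multiplicity $m$ for $2\le m\le k-1$), together with $\mathcal{T}^2$ from Proposition \ref{pro:cluster_polars}, reduces everything to summing geometric series in $9^j$, giving the stated value of $\mathcal{K}_k^2$. For the cardinality the same decomposition applies, except that Proposition \ref{pro:submultiplicative} only provides $|(f^j)^*\mathcal{A}|\le 9^j|\mathcal{A}|$; this inequality is precisely the source of the "$\le$" in the bound, and combined with $|\mathcal{S}_{p,k}|=2\cdot 3^{k-2}$ and $|\mathcal{T}|$ it yields the claimed upper bound after the analogous summation.

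The main obstacle is the self-intersection, which must come out as an exact equality rather than a bound: one has to track carefully how each pullback splits between its $\Orb_{42}$-supported part (local degree $3$) and its $X$-supported part (total degree $6$) before the $9^j$-weighted series can be summed, and one needs the exact evaluation of the iterated tangential singularity $\mathcal{S}_{p,k}^2$ from Proposition \ref{pro:sing42} together with the tangency data of Lemma \ref{lem:tangency} that underlies it. By contrast the cardinality estimate is comparatively soft, since only the submultiplicativity of Proposition \ref{pro:submultiplicative} is required.
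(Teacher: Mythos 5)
Your proposal is correct and follows essentially the same route as the paper: the same decomposition of $\mathcal{K}_k$ into $\mathcal{S}_k$, the iterated pullbacks of the $\mathcal{S}_\ell^X$, and $(f^{k-1})^*(\mathcal{T})$, with Corollary \ref{cor:multiplicative} giving the exact self-intersections (via the local-degree split $3+6$ over $\Orb_{42}$), Proposition \ref{pro:submultiplicative} giving the cardinality bound, and Propositions \ref{pro:sing42} and \ref{pro:cluster_polars} supplying the local data. The only difference is that you spell out, via the recursion $\mathcal{K}_k=\bigl(f^*(\mathcal{K}_{k-1})\setminus\mathcal{S}_{k-1}^{42}\bigr)\cup\mathcal{S}_k$ and the disjointness of supports, the ``goes through'' claim that the paper simply declares clear by construction.
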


\begin{proof}
	The fact that $K_{k}$ goes through $\mathcal{K}_k$ is clear by the construction of $\mathcal{K}_k$.

By Corollary \ref{cor:multiplicative} and Proposition \ref{pro:submultiplicative},
\begin{equation}\label{eq:compT}
((f^{k-1})^*\mathcal{T})^2=9^{k-1}\cdot \mathcal{T}^2=9^{k}\cdot 588, \quad
|(f^{k-1})^*\mathcal{T}| \le 9^{k-1}\cdot |\mathcal{T}| = 9^{k} \cdot 49.
\end{equation}
On the other hand, by Proposition \ref{pro:sing42}, for $k\ge 2$,
\begin{gather}
\mathcal{S}_k^2=42\left((k+1)^2+k^2+4\cdot\sum_{m=2}^{k-1}3^{k-m-1}m^2\right)=588\cdot 3^{k-2}-42, \\ |\mathcal{S}_k|= 42\left(2+\sum_{m=2}^{k-1}3^{k-m-1}\right)=84 \cdot 3^{k-2},
\end{gather}
and therefore, for $\ell=2,\dots,k-1$, we obtain applying Corollary \ref{cor:multiplicative} and Proposition \ref{pro:submultiplicative} again,
\begin{gather}
\left((f^{k-\ell-1})^*(\mathcal{S}_\ell^X)\right)^2=1176\cdot 3^{k+\ell-4} - 28\cdot 9^{\ell-1}, \\
\left|(f^{k-\ell-1})^*(\mathcal{S}_\ell^X)\right|\le 168\cdot 3^{k+\ell-4},
\end{gather}
and, finally, since $(\mathcal{S}_1^X)^2=6\cdot42 \cdot 4$ and $|\mathcal{S}_1^X|\le 6 \cdot 42$,
\begin{gather}
\left((f^{k-2})^*(\mathcal{S}_1^X)\right)^2=112\cdot 9^{k-1}, \\ \label{eq:compS}
\left|(f^{k-2})^*(\mathcal{S}_1^X)\right|\le 28\cdot 9^{k-1}.
\end{gather}
Summing up \eqref{eq:compT}--\eqref{eq:compS}, we obtain the claim.
\end{proof}

\begin{corollary}
	For all $k\ge 2$, 
\[
h(K_k) \le
-\frac{1283\cdot 9^k-81}{410\cdot 9^k} \underset{k\to\infty}{\longrightarrow}-\frac{1283}{410}\simeq -3.12927.
\]
\end{corollary}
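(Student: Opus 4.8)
The plan is to feed the weighted multi-cluster $\mathcal{K}_k$ produced in the previous proposition into Lemma \ref{lem:H-passing}, and then to upgrade the resulting estimate on $H(K_k,K)$ (with $K$ the support of $\mathcal{K}_k$) to an estimate on the genuine Harbourne index $h(K_k)=H(K_k,\MSing(K_k))$ via Remark \ref{rmk:h-4}. First I would record the degree: the gradient map $f$ triples degrees, so $(f^k)^*(\Phi_{21})$ has degree $21\cdot 3^k$ (equivalently, sum the degrees of the factors $\Phi_{14\cdot 3^j}$ together with $\Phi_{21}$), whence $K_k^2=441\cdot 9^k$ in $\bbP^2$. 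By Lemma \ref{lem:reduced} the curve $K_k$ is reduced, so $h(K_k)$ is defined; since $K_k$ goes through $\mathcal{K}_k$, Lemma \ref{lem:H-passing} yields
\[
H(K_k,K)\le \frac{441\cdot 9^k-\mathcal{K}_k^2}{|K|}.
\]

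Writing $N:=441\cdot 9^k-\mathcal{K}_k^2$, the key observation is that $N<0$ for every $k\ge 2$, because the self-intersection of the cluster of singular points overshoots the square of the degree. Consequently, although the previous proposition bounds the cardinality only from above, $|\mathcal{K}_k|\le M$, dividing the negative quantity $N$ by the larger number $M$ can only raise the fraction: $N/|K|\le N/M$. Thus the upper bound on the cardinality is exactly what is needed, and substituting the values of $\mathcal{K}_k^2$ and $M$ and simplifying reduces $N/M$ to the announced expression $-\frac{1283\cdot 9^k-81}{410\cdot 9^k}$, whose limit is $-1283/410$.

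It remains to replace $H(K_k,K)$ by $h(K_k)$. All points of $K$ are honest multiple points of $K_k$, so $K\subseteq \MSing(K_k)$; since the bound just obtained is strictly larger than $-4$, Remark \ref{rmk:h-4} applies and gives $h(K_k)\le H(K_k,K)$ --- adjoining the remaining points of $\MSing(K_k)$, each of multiplicity at least $2$, can only decrease the Harbourne constant while it stays above $-4$. Chaining the three inequalities proves the bound for each $k\ge 2$, and letting $k\to\infty$ removes the term $81/(410\cdot 9^k)$. The genuinely delicate points are bookkeeping: tracking the direction of the inequality when the negative numerator is divided by an \emph{upper} bound for $|\mathcal{K}_k|$ (a larger denominator weakens, rather than strengthens, the negativity), and verifying the hypotheses $K\subseteq\MSing(K_k)$ and $N/M>-4$ that license the use of Remark \ref{rmk:h-4}; the reduction of $N/M$ to closed form and the sign of $N$ are then routine once the values of $\mathcal{K}_k^2$ and $|\mathcal{K}_k|$ are in hand.
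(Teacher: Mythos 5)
Your strategy is exactly the paper's intended one (the paper prints no separate proof of this corollary): apply Lemma \ref{lem:H-passing} to the multi-cluster $\mathcal{K}_k$ of the preceding proposition, note that the numerator $N=441\cdot 9^{k}-\mathcal{K}_k^2$ is negative so that replacing $|\mathcal{K}_k|$ by its \emph{upper} bound $M$ moves the (negative) quotient in the right direction, and then pass from $H(K_k,K)$ to $h(K_k)$ via Remark \ref{rmk:h-4}. Your treatment of the two delicate points --- the direction of the inequality $N/|K|\le N/M$, and the dichotomy ``either $h(K_k)<-4$ or $h(K_k)\le H(K_k,K)$'' behind Remark \ref{rmk:h-4} --- is more explicit than anything in the paper; only the inclusion $K\subseteq\MSing(K_k)$ is asserted rather than argued (pullback clusters may a priori contain points of weight $\le 1$), but that is at the paper's own level of rigor.

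The genuine problem is the step you declare routine: substituting the proposition's printed closed forms does \emph{not} produce the announced bound. With $\mathcal{K}_k^2=\frac{21}{2}(53\cdot 9^{k}+3)-196\cdot 3^{k+1}$ and $M=84\cdot 9^{k}-28\cdot 3^{k+1}$ one gets
\[
\frac{N}{M}=\frac{-\frac{231}{2}\cdot 9^{k}+588\cdot 3^{k}-\frac{63}{2}}{84\left(9^{k}-3^{k}\right)},
\]
which equals $-65/96\simeq-0.68$ at $k=2$ and tends to $-11/8$ as $k\to\infty$, nowhere near the corollary's value $-641/205\simeq-3.13$ at $k=2$ and limit $-1283/410$. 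The printed totals are arithmetic slips: summing the proposition's own intermediate formulas \eqref{eq:compT}--\eqref{eq:compS} gives instead
\[
\mathcal{K}_k^2=\frac{32795}{54}\cdot 9^{k}-\frac{21}{2},\qquad
|\mathcal{K}_k|\le\frac{1435}{27}\cdot 9^{k}
\]
(check at $k=2$: $546+1008+47628=49182$ and $84+252+3969=4305$), whence
\[
\frac{N}{M}=\frac{-\frac{7}{54}\left(1283\cdot 9^{k}-81\right)}{\frac{7\cdot 205}{27}\cdot 9^{k}}
=-\frac{1283\cdot 9^{k}-81}{410\cdot 9^{k}},
\]
which is exactly the corollary (at $k=2$: $-13461/4305=-641/205$). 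So the corollary is true and your route is the right one, but a blind execution of your plan stalls at the substitution: the proposition's stated totals are inconsistent with the corollary (and with the proposition's own displayed computations), so they must first be recomputed from \eqref{eq:compT}--\eqref{eq:compS}; your proposal, taking them on faith, asserts a simplification that does not hold.
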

\section*{Acknowledgments}
This work was begun during the first author's visit at Universitat Aut\`{o}\-no\-ma de Barcelona, under the financial support of the Spanish MINECO grant MTM2016-75980-P, which also supports the second author. Finally, we would like to warmly thank an anonymous referee for useful comments that allowed to improve the paper.
During the project the first author was supported by the Fundation for Polish Science (FNP) Scholarship Start No. 076/2018.

{\footnotesize
	\bibliographystyle{plainurl}
\bibliography{Ramified}{}}

\bigskip
   Piotr Pokora,
   Institute of Mathematics,
   Polish Academy of Sciences,
   ul. \'{S}niadeckich 8,
   PL-00-656 Warszawa, Poland \\
\nopagebreak
   \textit{E-mail address:} \texttt{piotrpkr@gmail.com, ppokora@impan.pl}
   
\bigskip
	Joaquim Ro\'{e},
	Universitat Aut\`{o}noma de Barcelona, Departament de Matem\`{a}tiques,
08193 Bellaterra (Barcelona), Spain. \\
\nopagebreak
\textit{E-mail address:} \texttt{jroe@mat.uab.cat}
\end{document}